\documentclass[reqno]{amsart}
\usepackage{amssymb}
\usepackage{mathrsfs}
\usepackage{hyperref}
\usepackage{tikz-cd}
\usepackage{amsthm}
\usepackage{amsmath}
\usepackage{esint}
\usepackage{derivative}
\usepackage{amsfonts}
\usepackage{bbold}

\usepackage{biblatex}
\newcommand{\R}{\check{R}}
\newcommand{\W}{\mathcal{W}}

\newcommand{\copr}{\Delta}

\newtheorem{theorem}{Theorem}[section]
\newtheorem{lemma}[theorem]{Lemma}
\newtheorem{cor}[theorem]{Corollary}
\newtheorem{prop}[theorem]{Proposition}
\theoremstyle{definition}
\newtheorem{definition}[theorem]{Definition}
\newtheorem{example}[theorem]{Example}

\theoremstyle{remark}
\newtheorem{remark}[theorem]{Remark}

\newcommand{\dontprint}[1]\relax

\begin{document}
\title{Restricted quantum groups as graded Hopf algebras}
\author{Jelena Ani\'c}
\address{Department of Mathematics, ETH Zurich, 8092 Zurich, Switzerland}
\email{jelena.anic@math.ethz.ch}
\author{Giovanni Felder}
\address{Department of Mathematics, ETH Zurich, 8092 Zurich, Switzerland}
\email{giovanni.felder@math.ethz.ch}
\begin{abstract}
  We introduce the notion of $\pi^2$-graded Hopf algebra, where the grading is by the
  double groupoid of commutative diagrams of a finite groupoid $\pi$. The finite dimensional
  representations of a $\pi^2$-graded Hopf algebra form a rigid monoidal category with
  a fibre functor to the category of $\pi$-graded vector spaces. The main example is
  given by the restricted quantum groups underlying the Andrews--Baxter--Forrester
  restricted solid-on-solid models of statistical mechanics and, more generally, the
  Jimbo--Miwa--Okado models associated to classical Lie algebras.
\end{abstract}
\subjclass[2020]{16T05,17B37,18M15}
\keywords{Yang--Baxter equation, elliptic quantum groups, restricted solid-on-solid models, quantum groups at root of unity}
\maketitle
\tableofcontents
\section{Introduction}
Quantum groups emerged in the 1980s through the independent works of Drinfeld \cite{Drinfeld1987}, \cite{MR914215} and Jimbo \cite{Jimbo1985}, motivated by the study of integrable systems and solutions to the quantum Yang--Baxter equation. Formally, a quantum group is a Hopf algebra deformation of the universal enveloping algebra $U(\mathfrak{g})$ of a semisimple Lie algebra $\mathfrak{g}$, depending on a deformation parameter $q=e^{\hbar}\in\mathbb{C}^\times$. Beyond standard quantum groups, \textit{dynamical quantum groups}, introduced in \cite{FelderICMP1995}, describe additional levels of deformation. They depend on a \textit{dynamical parameter} $\lambda$ taking values in the dual of a Cartan subalgebra $\mathfrak{h}^\ast$. This dependence arises naturally in the study of dynamical Yang--Baxter equations 
\begin{align}\label{e-DYBE}
    R(z-w,\lambda +\hbar h^{(3)})^{(12)}R(z,\lambda)^{(13)}R(w,\lambda +\hbar h^{(1)})^{(23)}\\
    =R(w,\lambda )^{(23)}R(z,\lambda +\hbar h^{(2)})^{(13)}R(z-w,\lambda )^{(12)},\notag
\end{align}
where the $R$-matrix is a function $R(z,\lambda)$ of both a complex spectral and a dynamical variable and takes values in $\operatorname{End}_{\mathfrak{h}}(V\otimes V)$. The vector space $V=\oplus _{\mu\in\mathfrak{h}^\ast} V_{\mu}$ is a finite-dimensional semisimple $\mathfrak{h}$-module and the upper indices in the equation describe on which weight subspace the operator acts, for example, $R(w,\lambda-\hbar h^{(1)})^{(23)}$ acts on $V_{\mu_1}\otimes V_{\mu_2}\otimes V_{\mu_3}$ as $\operatorname{Id}\otimes R(w,\lambda-\hbar \mu_1)$.

Dynamical \textit{elliptic quantum groups} constitute the most general class of these deformations. They naturally appear in the study of elliptic integrable systems (e.g., the elliptic Calogero--Moser and Ruijsenaars--Schneider models), elliptic Knizhnik--Zamolodchikov--Bernard equations, \cite{FelderVarchenko1996}, \cite{FelderVarchenko1997}, \cite{FelderVarchenko1999}, \cite{FelderTarasovVarchenkoAMS1997}, \cite{FelderTarasovVarchenko1999}, \cite{FelderVarchenkoAoM2002}, \cite{FelderVarchenko2004},  \cite{EtingofSchiffmann1999}, \cite{Konno2020}. They are associated with elliptic solutions of the dynamical Yang--Baxter equation, where the dependence on the spectral parameter is elliptic rather than trigonometric or rational. They appear in several new developments in representation theory and mathematical physics, see, e.g., \cite{Fronsdal}, \cite{JimboKonnoOdakeShiraishi1999},\cite{KoelinkvanNordenRosengren2004}, \cite{Rosengren2011}, \cite{Hartwig2009},  \cite{GautamToledanoLaredo2017}, \cite{AganagicOkounkov2016}, \cite{AganagicOkounkov2017}, \cite{AganagicFrenkelOkounkov2018}, \cite{FelderZhangSM2017}, \cite{Zhang2018}, \cite{CostelloWittenYamazakiI2018}, \cite{RimanyiTarasovVarchenko2019}, \cite{RimanyiTarasovVarchenko2019}. The elliptic quantum groups can be defined in terms of $L$ (or $T$) operators, which is the elliptic and dynamical analogue of the FRST formulation (after Faddeev--Reshetikhin--Semenov--Tian--Shansky--Takhtadjan) of the quantum groups. The algebra is generated by the matrix entries of an $L$-operator $L(u,\lambda)=\left(L_{ij}(u,\lambda) \right)^n_{i,j=1}$ that satisfy the quadratic $RLL$ relations:
\begin{align*}
    R(z-w,\lambda +\hbar h^{(3)})^{(12)}L(z,\lambda)^{(1)}L(w,\lambda +\hbar h^{(1)})^{(2)}\\
    =L(w,\lambda)^{(2)}L(z,\lambda +\hbar h^{(2)})^{(1)}R(z-w,\lambda)^{(12)}.
\end{align*}
Because of the dynamical dependence of $L$-operators, one has to introduce shifts in the coalgebraic structure. For example, the coproduct acts on $L$-operators by
\begin{align*}
    \copr (L(z,\lambda))=L(z,\lambda+\hbar h^{(3)})^{(12)}L(z,\lambda )^{(13)}.
\end{align*}
Moreover, the representations are the vector spaces over the field of meromorphic functions of the dynamical variables on which generators act as difference operators. Thus, the suitable structure underlying elliptic quantum groups is that of $\mathfrak{h}$-\textit{Hopf algebroid}, introduced by Etingof and Varchenko \cite{EtingofVarchenko1998-2}.

As in the case of standard quantum groups, there are two more presentations of the elliptic quantum groups, the one in terms of the Chevalley type generators with a structure of a quasi-Hopf algebra, and the one in terms of the Drinfeld generators with a Hopf algebroid structure. Konno \cite{Konno2015} established an isomorphism between the central extension of the elliptic quantum group $E_{q,p}(\hat{\mathfrak{gl}_N})$ defined via the $L$-operators and the Drinfeld realization $U_{q,p}(\hat{\mathfrak{gl}_N})$.

In the context of solid-on-solid (SOS) lattice models, elliptic quantum groups provide the algebraic framework governing their integrability. These models describe the configurations of discrete (integer) height variables $l_i$, assigned to each lattice site $i\in M$ on a subset $M$ of a square lattice in the plane or 2-dimensional torus, subject to the constraints $\vert l_i -l_j\vert =1$ for adjacent sites $i,j$. Each unit square (face) with vertices in $M$ carries a Boltzmann weight, and the probability of a given configuration is proportional to the product of the Boltzman weights over all faces. The Boltzmann weights satisfy the star-triangle equation, which is a dynamical Yang--Baxter equation, where the dynamical parameter corresponds to the local height variable of the model. Restricted solid-on-solid (RSOS) models are special cases of SOS models where the allowed height variables take only a finite set of discrete values $l_i\in \{1,2,...,r-1\}$. These models were introduced by Andrews, Baxter and Forrester \cite{AndrewsBaxterForrester1984} and play a crucial role in conformal field theory and minimal models, since their continuum limits correspond to specific rational conformal field theories. 

Constructing a quantum group-like object for restricted models, however, involves specific challenges: one needs to specialize the $R$-matrices and representation matrices, which are meromorphic functions of the dynamical variables to a finite set. One then must avoid poles and ensure that the relations do not involve evaluation of $R$-matrices and $L$-operators at values of the dynamical variables outside the set. 

The aim of this paper is precisely to formulate the algebraic structure underlying these models. For this purpose
the second author and Ren \cite{FelderRen2021} introduced the groupoid-grading approach to describe categories of representations of restricted quantum groups. Unlike standard elliptic quantum groups whose representation spaces are vector spaces of meromorphic functions in continuous dynamical variables, \textit{restricted quantum groups} can have only finitely many allowed heights, meaning that their dynamical variables take values in a finite set. In \cite{FelderRen2021}, the authors replaced  the continuous dependence on the dynamical variable by a grading by a discrete groupoid $\pi$. They used the categorical viewpoint where they treated the category of representations of the restricted quantum group as the main object, which is a monoidal category with a forgetful functor to $\pi$-graded vector spaces. Within this $\pi$-graded setting they defined the $R$-matrix as isomorphisms $V_i\otimes V_j \cong V_j\otimes V_i$ in the graded category that satisfy a dynamical Yang-Baxter equation. Shifts are now implemented via the groupoid grading. This enables the machinery of the quantum inverse scattering method to be applied.  

 In this work, we develop the groupoid-graded framework further, providing an explicit Hopf-algebraic realization of the restricted quantum group rather than defining it through its representation category. The grading is extended to a \textit{double groupoid} $\pi^2$ equipped with its commutative squares (2-cells), which may be viewed as morphisms between two groupoids - the vertical and the horizontal ones - both having $\pi$ as their set of objects. The \textit{double groupoid graded Hopf algebra} is defined and its finite dimensional representations are shown to form a rigid monoidal category with a faithful monoidal functor to the category of $\pi$-graded vector spaces. 
 
 The main examples are restricted quantum groups for classical Lie algebras, for which the $R$-matrices are obtained from the elliptic solutions of the star-triangle relations of Jimbo, Miwa and Okado \cite{JimboMiwaOkado1988}, generalizing the Andrew--Baxter--Forrester solution. We show that they possess the structure of a $\pi^2$-graded Hopf algebra. Additionally to $RTT$ relations we impose crossing symmetry relation  to ensure the existence of inverse elements of the generators and to realize the antipode axioms. The finite groupoid associated with these representations is constructed so that its objects are in one-to-one
 correspondence with the integrable modules of  the affine Lie algebra of type $\mathfrak g^{{(1)}}=A_n^{(1)}$, $B_n^{(1)}$, $C_n^{(1)}$ or $D_n^{(1)}$ of fixed level $\ell\in\mathbb Z_{\geq0}$. By construction these Hopf algebras come with a representation, the vector
 representation. Its twist by the shift automorphism provides a continuous family of representations, the
 vector representation with spectral parameter. By taking subquotients of tensor products of such representations we get
 a category of representations, and corresponding solutions of the Yang--Baxter equation with elliptic coefficients.  

 As shown by Frenkel and Reshetikhin \cite{FrenkelReshetikhin1992}, the monodromy of the trigonometric quantum Knizhnik-Zamolodchikov equation provides elliptic solutions of the star-triangle relations. It should be expected that our approach extends to that setting, and give $\pi^2$-graded Hopf algebras associated with any semisimple Lie algebra. 

The dynamical Yang--Baxter equations has motivated several algebraic definitions of what a dynamical quantum group should be, starting from the notion of $\mathfrak h$-bialgebroids of Etingof and Varchenko \cite{EtingofVarchenko1998-2}. These definitions focus on the case of a dynamical variable taking continuous values, while in our main examples it takes values in a finite set. We review some of the literature on bialgebroids and compare it with our approach in \ref{sec-bialgebroids}.

Groupoid-graded vector spaces recently appeared in Baxterization \cite{Ren2023-1}, further models of statistical mechanics \cite{Ren2023-2} and in 4-dimensional super Yang-Mills theory \cite{BertlePomoniZhangZoubos2025}. It would be interesting to apply our construction to these cases. An open question is also to describe restricted elliptic quantum groups for general Lie algebras (or quivers) by Drinfeld currents as in \cite{JimboKonnoOdakeShiraishi1999}, \cite{GautamToledanoLaredo2017}, \cite{Konno2020}, \cite{YangZhao2017},\cite{YangZhao2019}. 

The text is organized as follows. In Section 2 we define the $\pi^2$-graded Hopf algebra and show that its finite dimensional representations have a natural structure of a rigid monoidal category. In Section 3 we briefly review the Yang--Baxter equation and its solution ($R$-matrix) in the context of groupoid-graded vector spaces and introduce the concept of $R$-matrix rotations. The main example of such $R$-matrices is given by Jimbo--Miwa--Okado models discussed in Section 4. In Section 5 we introduce the notion of {\em elliptic quantum algebra}, suitable for families of algebras defined by quadratic relations, such as $RTT$-relations, with elliptic coefficients. We construct the restricted elliptic quantum groups associated to classical Lie algebras $\mathfrak{sl}_n$, $\mathfrak{so}_n$ and $\mathfrak{sp}_n$ and show that they admit a $\pi^2$-graded Hopf algebra structure. The Jimbo--Miwa--Okado models provide representations of these algebras. 
\section{Double groupoid graded Hopf algebras}
The goal of this section is to introduce the notion of Hopf algebras
in the context of groupoid graded vector spaces and describe the
structure of the category of their representations.  We start by
reviewing the notion of groupoid graded vector spaces as introduced in
\cite{FelderRen2021} and proceed to define groupoid graded algebras
and coalgebras. In this context, bialgebras are naturally graded by
double groupoids. For the application we have in mind it will be
sufficient to consider gradings by the double groupoid of commutative
squares of a groupoid.

\subsection{Groupoid graded vector spaces}\label{ss-2.1}
We start with groupoid graded vector spaces, see \cite{FelderRen2021}
for more details. We fix a ground field $k$, which will be $\mathbb C$ in the examples.
\begin{definition}
  Let $\pi$ be a small groupoid with set of morphisms (arrows) $\pi_1$
  and set of objects $\pi_0$.  A $\pi$-graded vector space over $k$ is
  collection $(V_\alpha)_{\alpha\in \pi_1}$ of $k$-vector spaces
  labeled by arrows of $\pi$. Morphisms $f\colon V\to W$ are
  collections of linear maps $f_\alpha\colon V_\alpha\to W_\alpha$.
\end{definition}
We denote by $\pi(a,b)\subset\pi_1$ the set of arrows $\alpha$ from $a\in\pi_0$
to $b\in\pi_0$; $a=s(\alpha)$ is the source of $\alpha$ and $b=t(\alpha)$ is the target of $\alpha$. We denote by $\operatorname{id}_a\in\pi(a,a)$ the identity
object of $a\in\pi_0$ and by $\alpha^{-1}\in\pi(b,a)$ the inverse of
$\alpha\in\pi(a,b)$.

The $\pi$-graded vector spaces over $k$ form an abelian monoidal
category $\mathrm{Vect}_\pi$ with tensor product
$(V\otimes W)_\gamma=\oplus_{\beta\circ\alpha=\gamma} V_\alpha\otimes
V_\beta$ and unit $\mathbf 1$ such that $1_\alpha=k$ for identity
arrows $\alpha$ and $1_\alpha=0$ otherwise.  The category
$\operatorname{Vect}_\pi$ has internal homs with
$\underline{\operatorname{Hom}}(V,W)_\alpha
=\Pi_\beta\operatorname{Hom}_k(V_\beta,W_{\beta\circ\alpha})$.  The
dual $V^*=\underline{\operatorname{Hom}}(V,\mathbf 1)$ of a
$\pi$-graded vector space $V$ has components
$(V^*)_\alpha=\operatorname{Hom}_k(V_{\alpha^{-1}},k)$ and comes with
a nondegenerate pairing $V^*\otimes V\to \mathbf1$.
The category $\operatorname{Vect}^f_\pi$ of $\pi$-graded vector spaces
of finite type is the full subcategory of $\operatorname{Vect}_\pi$
whose objects have finite dimensional components $V_\alpha$ and are
such that for any object $a\in\pi_0$ the set of arrows
$\alpha\in\pi_1$ with source or target $a$ such that $V_\alpha\neq 0$
is finite.  We have a character map
$\operatorname{ch}\colon \operatorname{Vect}_\pi\to\mathbb
Z(\pi)=\mathbb Z^{\pi_1}$ sending $V$ to the collection of dimensions
$\operatorname{dim}(V_\alpha)_{\alpha\in\pi_1}$ inducing an
isomorphism from the Grothendieck ring of $\operatorname{Vect}^f_\pi$
to the convolution ring of the groupoid.

\subsection{Groupoid graded algebras and coalgebras}\label{ss-2.2}
\begin{definition}
  A $\pi$-graded unital algebra over $k$ is a $\pi$-graded vector
  space $(A_\alpha)_{\alpha\in\pi_1}$ over $k$ with morphisms of
  $\pi$-graded vector spaces $\nabla\colon A\otimes A\to A$ and
  $\eta\colon\mathbf 1\to A$.  obeying the associativity and unit
  axioms. In other words $A$ is a unital algebra object in the
  monoidal category $\operatorname{Vect}_\pi$.
\end{definition}
Representations of a $\pi$-graded algebras $A$ are naturally graded by
the set of objects of $\pi$:
\begin{example}\label{example1}
  Let $V=(V_a)_{a\in\pi_0}$ be a collection of vector spaces labeled
  by objects of $\pi$. Then
  $\operatorname{End}(V)=(\operatorname{End}(V)_\alpha)_{\alpha\in\pi_1}$
  with
  \[
    \operatorname{End}(V)_\alpha=\operatorname{Hom}_k(V_a,V_b), \quad
    \alpha\in\pi(b, a),
  \]
  is a groupoid graded algebra.
\end{example}
A representation of a $\pi$-graded algebra on $V=(V_a)_{a\in\pi_0}$ is
then a morphism of $\pi$-graded algebras
$\rho\colon A\to \operatorname{End}(V)$. Explicitly, for any arrow
$\alpha\in\pi(a,b)$ we have a linear map
$\rho_\alpha\colon A_\alpha\to\operatorname{Hom}_k(V_b,V_a)$ such that
$\rho_{\beta\circ \alpha}(\nabla(x,y))=\rho_\alpha(x)\rho_\beta(y)$ for
all composable arrows $\beta,\alpha$ and all $x\in A_\alpha$ and
$y\in A_\beta$, and $\rho_{\alpha}(\eta_\alpha(1))=\mathrm{Id}_{V_a}$
for identity arrows $\alpha\in\pi_1(a,a)$.

We can view representations as left modules $A\otimes V\to V$,
$a\otimes v\mapsto \rho(a)v$, where the tensor product is defined as
having components
$(A\otimes V)_a=\oplus_{\alpha\in\pi(a,b)}A_\alpha\otimes V_b$. 

\begin{definition}
  A $\pi$-graded counital coalgebra is a coalgebra in the category of
  $\pi$-graded vector spaces.
\end{definition}
Thus, such a coalgebra $C$ comes with structure maps
$\Delta\colon C\to C\otimes C$ and $\epsilon\colon C\to \mathbf 1$
obeying coassociativity and counit axioms.

\subsection{Generators and relations}  \label{sec-2.3}
A left ideal of a $\pi$-graded algebra $A$ is a $\pi$-graded subspace
$B\subset A$ such that $\nabla(x,y)\subset B_{\beta\circ\alpha}$ for
all composable arrows $\beta,\alpha$ and $x\in A_\alpha$ and
$y\in B_\beta$. Similarly, we have the notions of right and two-sided
ideals. The quotient of a $\pi$-graded algebra by a two-sided ideal is
then also a $\pi$-graded algebra.  We can thus define $\pi$-graded
algebras by generators and relations.  The free $\pi$-graded algebra
generated by a $\pi$-graded vector space $V$ is the tensor algebra
$T(V)=\mathbf 1\oplus V\oplus (V\otimes V)\oplus\cdots$. For a
$\pi$-graded subspace $R\subset T(V)$ the $\pi$-graded algebra
generated by the $\pi$-graded vector space $V$ subject to the
relations $R$ is the quotient of $T(V)$ by the smallest ideal
containing $R$.

\subsection{The double groupoid of commutative squares}\label{ss-2.3}
Let $\pi$ be a small groupoid. The commutative squares
\begin{equation}\label{e-alpha}
  \begin{tikzcd}
    a\arrow[d,"u"']\arrow[r,"f"]\arrow[rd,phantom,""] &
    a'\arrow[d,"v"]
    \\
    b\arrow[r,"g"]&b'
  \end{tikzcd}
\end{equation}
in $\pi$ are the 2-cells of a double groupoid $\Gamma=\pi^2$. This
means that they are morphisms (but we call them 2-cells to avoid
confusion) for two groupoids, called the horizontal groupoid
$\Gamma^\bullet$ and the vertical groupoid $\Gamma^\circ$. Both have
the morphisms of $\pi$ as their set of objects.  The objects
$\Gamma_0^\bullet$ of $\Gamma^\bullet$ are called vertical morphisms
and those of $\Gamma^\circ$ are called horizontal morphisms. The
commutative square $\alpha$ in the picture above is both a morphism of
the vertical groupoid from the horizontal morphism $f$ to the
horizontal morphism $g$ and also a morphism of the horizontal groupoid
from $u$ to $v$:
\[
  \alpha\in \Gamma^\circ(f,g),\quad \alpha\in \Gamma^\bullet(u,v).
\]
The horizontal composition $\bullet$ of 2-cells is defined
if the corresponding squares have matching vertical morphisms when placed
next to each other: given two matching 2-cells
\begin{equation}\label{e-hor-composition}
  \begin{tikzcd}
    a\arrow[d,"u"']\arrow[r,"f"]\arrow[rd,phantom,"\alpha"]
    &
    a'\arrow[d,"v"']\arrow[r,"f'"]\arrow[rd,phantom,"\beta"]
        &
    a''\arrow[d,"w"]
   \\
    b\arrow[r,"g"']&    b'\arrow[r,"g'"']&b''
  \end{tikzcd},
\end{equation}
the horizontal composition is given by the square
\[
  \beta\bullet\alpha=
    \begin{tikzcd}
    a\arrow[d,"u"']\arrow[r,"f'\bullet f"]
    &
    a'\arrow[d,"w"]
   \\
    b\arrow[r,"g'\bullet g"']&b'
  \end{tikzcd}.
\]
Here $f'\bullet f$ and $g'\bullet g$ is the composition of morphisms in $\pi$, denoted
$\bullet$ for consistency of notation.
The vertical composition  $\circ$ is defined if the horizontal morphisms match when we put the squares
on top of each other:
\[
  \beta\bullet\alpha=
    \begin{tikzcd}
    a\arrow[d,"u"']\arrow[r,"f'\bullet f"]
    &
    a'\arrow[d,"w"]
   \\
    b\arrow[r,"g'\bullet g"']&b'
  \end{tikzcd},\quad
  \begin{tikzcd}
    a\arrow[d,"u"']\arrow[r,"f"]\arrow[rd,phantom,"\alpha"]
    &
    a'\arrow[d,"v"]
   \\
    b\arrow[d,"u'"']\arrow[r,"g"]\arrow[rd,phantom,"\beta"]
    &
    b'\arrow[d,"v'"]
   \\
    c\arrow[r,"h"']&c'
  \end{tikzcd}
  \quad \longmapsto
  \quad \beta\circ\alpha=
    \begin{tikzcd}
    a\arrow[d,"u'\circ u"]\arrow[r,"f"]
    &
    a'\arrow[d,"v'\circ v"]
   \\
    c\arrow[r,"h"']&c'
  \end{tikzcd}
\]
We have an identity 2-cell $\mathrm{id}^\bullet_u$ of $\Gamma^\bullet$ for
each vertical morphism $u$, whose source and target are horizontal
identity morphisms. Similarly, there is an identity 2-cell
$\mathrm{id}^\circ_f$ for each horizontal morphism $f$:
\[
  \mathrm{id}^\bullet_u=
 \begin{tikzcd}
   a\arrow[d,"u"']\arrow[r,"\mathrm{id}_a"]
   & a\arrow[d,"u"]
   \\
   b\arrow[r,"\mathrm{id}_b"']&b
 \end{tikzcd},
 \qquad\mathrm{id}^\circ_f=
  \begin{tikzcd}
    a\arrow[d,"\mathrm{id}_a"']\arrow[r,"f"]
    &
    a'\arrow[d,"\mathrm{id}_{a'}"]
   \\
    a\arrow[r,"f"']&a'
  \end{tikzcd}
\]
The 2-cells are invertible in both $\Gamma^\bullet$ and $\Gamma^\circ$. For example, the inverse
of $\alpha\in \Gamma_1^\bullet$ above is the commutative square
\[
  \begin{tikzcd}
    a'\arrow[d,"v"']\arrow[r,"f^{-1}"]\arrow[rd,phantom," "]
    &
    a\arrow[d,"u"]
   \\
    b'\arrow[r,"g^{-1}"]&b
  \end{tikzcd}
\]  
The compatibility
of horizontal and vertical composition is expressed by the {\em interchange law}
\[
  (\delta\circ\beta)\bullet(\gamma \circ\alpha)
  =
  (\delta\bullet\gamma)\circ(\beta\bullet\alpha)
\]
which holds whenever both sides are defined, namely if the sources and targets match
as in the following picture:
\begin{equation}\label{e-exchange}
  \begin{tikzcd}
    a\arrow[d,"u"']\arrow[r,"f"]\arrow[rd,phantom,"\alpha"]
    &
    a'\arrow[d,"v"]\arrow[r,"f'"]\arrow[rd,phantom,"\beta"]
    &
    a''\arrow[d,"w"]
    \\
    b\arrow[d,"u'"']\arrow[r,"g"]\arrow[rd,phantom,"\gamma"]
    &
    b'\arrow[d,"v'"]\arrow[r,"g'"]\arrow[rd,phantom,"\delta"]
    &
    b''\arrow[d,"w'"]
    \\
    c\arrow[r,"h"']&c'\arrow[r,"h'"']& c''
  \end{tikzcd}
\end{equation}
In a more abstract language the above properties are the statement
that commutative squares form a double groupoid, namely a groupoid
in the category of small groupoids.
Its groupoid of objects is the groupoid whose objects are the objects
of $\pi$ and whose morphisms are the horizontal morphisms; its groupoid of
morphisms is the groupoid whose objects are vertical morphisms and whose
morphisms are the 2-cells.
In fact, our construction works for any double groupoid, but we limit
ourselves to the special case needed for our application.

\subsection{The doubly graded category of \texorpdfstring{$\Gamma$}{gamma}-graded vector spaces}\label{ss-2.4}
Let $\pi$ be a small groupoid and $\Gamma=\pi^2$ the double category
of its commutative squares. A $\Gamma$-graded
vector space over a field $k$ is a collection $(A_\alpha)_{\alpha\in \Gamma}$ of vector
spaces over $k$ labeled by the 2-cells of $\Gamma$. Since 2-cells are morphisms
of two groupoids $\Gamma^\bullet$ and $\Gamma^\circ$, we have two
tensor products $\otimes_h$, $\otimes_v$ on $\Gamma$-graded vector
spaces. They are defined by
\[
  (a\otimes_h b)_{\gamma}=\oplus_{\beta\bullet\alpha=\gamma}
  a_\alpha\otimes b_\beta,
\]
and
\[
  (a\otimes_v
  b)_{\gamma}=\oplus_{\beta\circ\alpha=\gamma}a_\alpha\otimes b_\beta,
\]
respectively.

Thus, we have two monoidal structures, the horizontal one and the vertical
one in the category of $\Gamma$-graded vector spaces. The unit $I_h$
for the horizontal structure has components $(I_h)_\alpha=k$ if
$\alpha=\mathrm{id}^\bullet_u$ is an identity 2-cell and
$(I_h)_\alpha=0$ otherwise. Similarly, we have a vertical unit $I_v$.
The two monoidal structures are compatible in the sense that there is
a natural transformation (of 4 arguments)
\begin{equation}\label{e-iso}
  (A\otimes_h C)\otimes_v(B\otimes_hD)
  \to (A\otimes_v B)\otimes_h(C\otimes_vD).
\end{equation}
sending $(a\otimes c)\otimes(b\otimes d)$ for
$a\in A_\alpha,\dots, d\in A_\delta$ to
$(a\otimes b)\otimes (c\otimes d)$ whenever $\alpha,\dots,\delta$ are
as in \eqref{e-exchange} and to zero otherwise. The natural
transformation \eqref{e-iso} expresses the fact that the horizontal
tensor product
$\otimes_h\colon \mathcal V\times \mathcal V\to\mathcal V$ on the
category $\mathcal V$ of $\Gamma$-graded vector spaces is a strong monoidal
functor for the vertical monoidal structure.

Variants of this construction exist: one that will be relevant for us to define the monoidal category of modules over doubly graded Hopf algebras is the vertical tensor product of a $\Gamma$-graded vector space $A$ with a $\pi$-graded vector space $V$. It is the $\pi$-graded vector space $A\otimes V$ such that $(A\otimes_v V)_f=\oplus_{s^\circ(\alpha)=f}A_\alpha\otimes V_{t^\circ(\alpha)}$. The sum is over the commutative squares $\alpha$ whose $\Gamma^\circ$-source (top arrow) is $f$.  This vertical tensor product is a functor $\mathcal V\times \operatorname{Vect}_{\pi}\to\operatorname{Vect}_{\pi}$ and we have a natural transformation corresponding to the picture \eqref{e-hor-composition}
\[
(A\otimes_v V)\otimes(B\otimes_v W)\to (A\otimes_h B)\otimes_v (V\otimes W),
\]
for $\Gamma$-graded $A,B$ and $\pi$-graded $V,W$. Here $\otimes$ is the tensor product in $\operatorname{Vect}_\pi$ which in this context can be thought of as horizontal.

\begin{example}\label{ex-2.5} Let $V,W$ be $\pi$-graded vector spaces. Then the vector space of morphisms $\operatorname{Hom}(V\otimes W,W\otimes V)$ is naturally $\pi^2$-graded. Homogeneous elements are morphisms which send $V_\alpha\otimes W_\beta$ to $W_\gamma\otimes V_\delta$ and are zero on other components. The morphism property requires that $\beta\circ\alpha=\delta\circ\gamma$, i.e., the four arrows form a commutative square. Here we took $V,W,W,V$ as the involved vector spaces since we have $R$-matrices in mind, see Section \ref{sec-3} but the same holds for any four $\pi$-graded vector spaces.
\end{example}
\subsection{\texorpdfstring{$\Gamma$}{gamma}-graded bialgebras for finite groupoids}\label{ss-2.5}
Here we assume that $\pi$ is a finite groupoid (i.e., with finite sets of objects and morphisms), which is
the case for our application to restricted quantum groups. This avoids questions
of completion. Let again $\Gamma=\pi^2$ be the double groupoid of commutative squares in $\pi$.

A $\Gamma$-graded bialgebra is a $\Gamma$-graded vector space with a product
$\nabla\colon A\otimes_vA\to A$ with unit $\eta\colon I_v\to A$ and a
coproduct $\Delta\colon A\to A\otimes_hA$ with counit
$\epsilon\colon A\to I_h$, obeying the axioms (H1)--(H5) below.
\begin{enumerate}
\item[(H1)] $(A,\nabla,\eta)$ is a unital algebra for the vertical
  monoidal structure.
\item[(H2)] $(A,\Delta,\epsilon)$ is a counital coalgebra for the
  horizontal monoidal structure.
\item[(H3)] The following diagrams (involving the morphism
  \eqref{e-iso}) commute:
      \[
        \begin{tikzcd}
          A\otimes_vA\arrow[d,"\Delta\otimes_v\Delta"] \arrow[r,"\nabla"]
          & A\arrow[r,"\Delta"] & A\otimes_h A
          \\
          (A\otimes_h A) \otimes_v(A\otimes_h A) \arrow[rr] & &
          (A\otimes_vA) \otimes_h (A\otimes_vA)
          \arrow[u,"\nabla\otimes_h\nabla"]
        \end{tikzcd}
      \]
    \end{enumerate}
A $\Gamma$-graded vector space obeying (H1-H3) can be called a $\Gamma$-graded
bialgebra. The notion also makes sense for an infinite groupoid. For the remaining
axioms it is easier to assume that $\pi$ is finite since in this case no completion of
tensor products is needed.

Before formulating the axioms for the compatibility with unit and counit, we notice that
the multiplication of scalars defines an associative product on $I_h$
for the {\em vertical} tensor product, namely the map
\[
\nabla\colon  I_h\otimes_v I_h\to I_h
\]
sending $x\otimes y$ with $x\in (I_h)_{\mathrm{id}^\bullet_u}$ and
$y\in (I_h)_{\mathrm{id}^\bullet_{u'}}$ for composable $u,u'$ to
\[
  xy\in (I_h)_{\mathrm{id}^\bullet_{u'}\circ \mathrm{id}^\bullet_{u}
    =\mathrm{id}^\bullet_{u'\circ u}}
  =(I_h)_{\mathrm{id}^\bullet_{u'\circ u}}.
\]
Similarly, we have a coassociative coproduct
$\Delta\colon I_v\to I_v\otimes_h I_v$.

We will also need the map $\sigma\colon I_v\to I_h$ whose components
are all zero except for $\mathrm{id}^\circ_{f}$ with $f=\mathrm{id}_a$
with $a$ an object of $\pi$, depicted below, where the map is the
identity on $k$.
\[
 \begin{tikzcd}
   a\arrow[d,"\mathrm{id}_a"']\arrow[r,"\mathrm{id}_a"]\arrow[rd,phantom,""]
   & a\arrow[d,"\mathrm{id}_a"]
   \\
   a\arrow[r,"\mathrm{id}_a"']&a
  \end{tikzcd},
\]
Similarly, we have a map $\sigma'\colon I_h\to I_v$ which is the
identity on the component labeled by the above commutative square and
zero on the other components.
\begin{enumerate}
\item[(H4)] The following diagrams commute:
  \[
    \begin{tikzcd}
      I_v\arrow[r,"\eta"] \arrow[d,"\Delta"] &A\arrow[d,"\Delta"]
      \\
      I_v\otimes_h I_v\arrow[r,"\eta\otimes_h\eta"]&A\otimes_h A
    \end{tikzcd}
    \begin{tikzcd}
      A\otimes_vA\arrow[r,"\epsilon\otimes\epsilon"]
      \arrow[d,"\nabla"]& I_h\otimes_v I_h\arrow[d,"\nabla"]
      \\
      A\arrow[r,"\epsilon"] & I_h
    \end{tikzcd}
  \]
\item[(H5)] $\epsilon\circ\eta=\sigma\colon I_v\to I_h.$
\end{enumerate}
A $\Gamma$-graded bialgebra $A$ is in particular a
$\Gamma^\circ$-graded algebra and thus comes with a category of
modules graded by the objects of $\Gamma^\circ$ which are the
(horizontal) morphisms of $\pi$. Thus a module over a
$\Gamma^\circ$-graded algebra is a $\pi$-graded vector space.  The
coproduct defines an action of $A$ on tensor products of $\pi$-graded
modules and we obtain a monoidal category of modules. To define dual
modules, we need to add the existence of an antipode to our list of
axioms.  The antipode does not preserve the grading; rather it sends a
component labeled by $\alpha$ to the component labeled by
$\alpha^{-1}$. Let $A$ be a $\Gamma$-graded bialgebra. The opposite
bialgebra $A^{op}$ has components $A^{op}_{\alpha}=A_{\iota(\alpha)}$
and opposite product and coproduct. Here, the double inverse $\iota$ is
the composition of the inversion for both the vertical and the
horizontal groupoid structure:
\begin{equation}\label{e-double-inverse}
  \begin{tikzcd}
    a\arrow[d,"u"']\arrow[r,"f"]\arrow[rd,phantom,""]
    &
    a'\arrow[d,"v"]
   \\
    b\arrow[r,"g"]&b'
  \end{tikzcd}
  \stackrel{\iota}\longmapsto
  \begin{tikzcd}
    b'\arrow[d,"v^{-1}"']\arrow[r,"g^{-1}"]\arrow[rd,phantom,""]
    &
    b\arrow[d,"u^{-1}"]
   \\
    a'\arrow[r,"f^{-1}"]&a
  \end{tikzcd}
\end{equation}

\begin{definition} Let $\Gamma=\pi^2$ be the category of commutative
  squares of a finite groupoid $\pi$. A {\em $\Gamma$-graded Hopf
    algebra} is a $\Gamma$-graded bialgebra $A$ with an antipode
  $S\colon A\to A^{\mathrm{op}}$ such that
  \begin{enumerate}
    \item[(H6)] $S$ is an algebra isomorphism.
    \item[(H7)] The following diagrams commute:
  \[
    \begin{tikzcd}
      I_h\arrow[r,"\sigma' "] &I_v\arrow[d,"\eta"]
      \\
      A\arrow[u,"\epsilon"] \arrow[d,"\Delta"] &A
      \\
      A\otimes_h A\arrow[r,"S\otimes\mathrm{id}"]
      &
      A\otimes_v A\arrow[u,"\nabla"]
    \end{tikzcd}
    \begin{tikzcd}
      I_h\arrow[r,"\sigma' "] &I_v\arrow[d,"\eta"]
      \\
      A\arrow[u,"\epsilon"] \arrow[d,"\Delta"] &A
      \\
      A\otimes_h A\arrow[r,"\mathrm{id}\otimes S"]
      &
      A\otimes_v A\arrow[u,"\nabla"]
    \end{tikzcd}
  \]
  \end{enumerate} 
\end{definition}
The maps $S\otimes \mathrm{id}$ and $\mathrm{id}\otimes S$ require an
explanation since $S$ does not preserve the grading. More precisely, we
use the maps
\[
  A\otimes_h A\stackrel{i}\hookrightarrow A\otimes
  A\stackrel{p}\twoheadrightarrow A\otimes_vA,
\]
the first map being the inclusion map and the second the projection
onto the components with matching horizontal morphisms. Then
$S\otimes\mathrm{id}$ and $\mathrm{id}\otimes S$ are interpreted as
$p\circ(S\otimes\mathrm{id})\circ i$,
$p\circ(\mathrm{id}\otimes S)\circ i$, respectively.

\subsection{The monoidal category of representations}\label{ss-2.6}
In this section, we show that finite dimensional representations of $\pi^2$-graded Hopf algebras
have a natural structure of a rigid monoidal category.

Let $\pi$ be a finite groupoid, $\Gamma=\pi^2$ the double groupoid of its
commutative squares.  Let $A$ be a $\Gamma$-graded Hopf algebra. Then it
is in particular a $\Gamma^\circ$-graded algebra and by the construction
of Section \ref{ss-2.2} applied to the groupoid $\Gamma^\circ$,
representations of $A$ are defined on $\pi$-graded vector spaces,
$\pi$ being here the set of objects of $\Gamma^\circ$, namely the set of horizontal
morphisms.

Morphisms of representations are morphisms of $\pi$-graded vector
spaces commuting with the action of $A$. We thus get an abelian
category of representations of $A$ with a faithful functor to the
category of $\pi$-graded vector spaces.

The coproduct then defines a representation of $A$ on the tensor
product $V\otimes W$ of representations on $\pi$-graded vector spaces
$V$ and $W$. Namely, we first have an action of $A\otimes_h A$:
$a\otimes b\in A\otimes_h A$ acts on $V\otimes W$ by
\[
  ((a\otimes b)\cdot(v\otimes w))_l=\sum a_\alpha v_g\otimes b_\beta
  w_{g'}, \quad l\in\pi_1,
\]
where the summation is over pairs of matching 2-cells $\alpha,\beta$
as in \eqref{e-hor-composition} with bottom horizontal morphisms
$g,g'$, and such that $f'\bullet f=l$. Using the coproduct
$\Delta\colon A\to A\otimes A$ we obtain an action of $A$, defining a
representation of $A$ on $V\otimes W$.  The first part of Axiom (H4)
ensures that it is a representation of the unital algebra $A$.

The unit for the tensor product of representations is the {\em trivial
  representation}: its representation space is the $\pi$-graded vector
space $\mathbf 1$ (the monoidal unit of $\operatorname{Vect}_\pi$)
with components $(\mathbf 1)_f=k$ for $f=\mathrm{id}_a$, $a\in\pi_0$
and zero otherwise.  The action of $A$ is via the counit
$\epsilon\colon A\to I_h$. The second part of (H4) and (H5) state that
this is a morphism of unital $\Gamma^\circ$-graded algebras. The
composition
\[
  A\stackrel\epsilon\longrightarrow I_h\to \operatorname{End}(\mathbf 1)
\]
with the algebra morphism $I_h\to \operatorname{End}(\mathbf 1)$
sending $1\in (I_{h})_{\mathrm{id}^{\bullet}_u}$ for $u\in\pi(a,b)$ to the
identity map
$\mathbf 1_{\mathrm{id}_b}\to\mathbf 1_{\mathrm{id}_a}$ (both are
$k$) is the trivial representation.

The representation $V^*$ dual to a finite dimensional representation
$V$ is the $\pi$-graded dual of $V$ with the action given by the
antipode.  In more detail, a representation on $V$ is given by a
collection of linear maps
$\rho_\alpha\colon A_\alpha\to \operatorname{Hom}_k(V_g,V_f)$ for each
$\alpha\in \Gamma^\circ(f,g)$. Then the map
\begin{align*}
  \rho^*_\alpha
  &\colon A_\alpha\to
    \operatorname{Hom}_k((V^*)_g,(V^*)_f)
  \\
  &=\operatorname{Hom}_k((V_{g^{-1}})^*,(V_{f^{-1}})^*)
  \\
  &\cong\operatorname{Hom}_k((V_{f^{-1}},V_{g^{-1}})
\end{align*}
is  $\rho_{\iota(\alpha)}\circ S_{\alpha}$, which is well-defined
since $\iota(\alpha)\in \Gamma^\circ(g^{-1},f^{-1})$. Explicitly, if we denote
by $\langle\;,\;\rangle$ the pairing $V^*\otimes V\to \mathbf 1$,
we have that
\[
  \langle\rho^*_\alpha(a)\phi,v\rangle=\langle\phi, \rho_{\iota(\alpha)}(S_{\alpha}(a))v\rangle,\qquad a\in
  A_{\alpha}, \quad \phi\in (V_{g^{-1}})^*,\quad v\in V_{f^{-1}}.
\]
By (H6) this is indeed a representation.  It then follows from the
second diagram of Axiom (H7) that the pairing
$V^*\otimes V\to \mathbf 1$ is a morphism to the trivial
representation.  Similarly, with our finiteness assumption, we have a
dual map $\mathbf 1\to V\otimes V^*$, which by the first diagram in
(H7) is also a morphism of representations.

As in the case of the category of finite dimensional representations
of an ordinary Hopf algebra, see, e.g., \cite[Chapter 5]{ChariPressleyBook1994},
one checks that these objects statisfy the axioms of a rigid
monoidal category:

\begin{theorem}
  The finite dimensional representations of a $\Gamma$-graded Hopf
  algebra form a rigid monoidal category with a faithful monoidal
  functor to the category of $\pi$-graded vector spaces.
\end{theorem}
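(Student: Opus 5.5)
The plan is to transport the standard argument for ordinary Hopf algebras (as in \cite[Chapter 5]{ChariPressleyBook1994}) into the groupoid-graded setting. We use that the forgetful functor $F$ to $\operatorname{Vect}_\pi$ is faithful and that $\operatorname{Vect}^f_\pi$ is already a rigid monoidal category. Concretely, every structure map we need (associator, unitors, evaluation $V^*\otimes V\to\mathbf 1$, coevaluation $\mathbf 1\to V\otimes V^*$) will be taken to be the corresponding map in $\operatorname{Vect}_\pi$. We then only have to check that these maps are $A$-linear. All coherence conditions (pentagon, triangle, zig-zag identities) then hold automatically, because they hold after applying the faithful functor $F$. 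In this way $F$ becomes a strict (or strong) monoidal functor by construction.

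The steps, in order, are as follows.

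\emph{Step 1.} The tensor product $V\otimes W$ is a representation. This uses coassociativity-free facts only: (H3) says $\Delta$ is multiplicative with respect to the interchange map \eqref{e-iso}, so the action of $A\otimes_hA$ on $V\otimes W$ composed with $\Delta$ is a morphism of $\Gamma^\circ$-graded algebras. The first diagram of (H4) gives unitality. Functoriality in $V$ and $W$ is clear.

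\emph{Step 2.} The associator $(U\otimes V)\otimes W\to U\otimes(V\otimes W)$ of $\operatorname{Vect}_\pi$ is $A$-linear. This follows from coassociativity (H2), once we note that the two actions of $A\otimes_hA\otimes_hA$ agree under the natural identifications of horizontal compositions of three matching 2-cells.

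\emph{Step 3.} The unitors $\mathbf 1\otimes V\cong V\cong V\otimes\mathbf 1$ are $A$-linear. This uses the counit axiom in (H2) together with the trivial representation described above. That description relies on the second part of (H4) and (H5), which ensure that $\epsilon$ followed by $I_h\to\operatorname{End}(\mathbf 1)$ is an algebra map.

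\emph{Step 4 (rigidity).} For finite dimensional $V$ we form $V^*$ with action $\rho^*_\alpha=\rho_{\iota(\alpha)}\circ S_\alpha$. By (H6) and the fact that $\iota$ reverses both compositions, this is a representation. We then verify that $\mathrm{ev}\colon V^*\otimes V\to\mathbf 1$ intertwines the actions using the second diagram of (H7), and that $\mathrm{coev}\colon\mathbf 1\to V\otimes V^*$ does so using the first. If left duals are also required, we use $S^{-1}$, which exists since $S$ is an isomorphism by (H6).

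The main obstacle is Step 4, specifically the index bookkeeping. For $a\in A_\alpha$ with $\Delta(a)=\sum a'\otimes a''$, the action on $\phi\otimes v\in (V^*)_{g_1}\otimes V_{g_2}$ only involves pairs of matching 2-cells. To compare with $\epsilon(a)$ acting on $\mathbf 1$, one must apply the projection $p$ in $p\circ(S\otimes\mathrm{id})\circ i$. Here I would check, component by component, that the terms killed by $p$ are exactly those that pair to zero because $\langle\,,\rangle$ is supported on $g_1=g_2^{-1}$. I would also check that the surviving terms are precisely $\nabla\circ(S\otimes\mathrm{id})\circ\Delta(a)$, evaluated on the identity-arrow component, and that this equals $\eta\sigma'\epsilon(a)$. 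I would first write out the case where $\alpha$ has identity horizontal edges, and then the general case, where both sides vanish unless the top arrow $f$ is an identity. Finiteness of $\pi$ and of $\dim V$ is used so that the coevaluation is a finite sum.
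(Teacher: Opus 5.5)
Your proposal is correct and is essentially the paper's own argument: the structure maps of $\operatorname{Vect}_\pi$ are shown to be $A$-linear using (H3) and the first part of (H4) for tensor products, (H2), the second part of (H4) and (H5) for the associator, unitors and trivial representation, and (H6)--(H7) for duals, with all coherence conditions inherited through the faithful forgetful functor as in \cite[Chapter 5]{ChariPressleyBook1994}. Two small bookkeeping corrections to Step 4 are needed.

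First, with $\langle\rho^*_\alpha(a)\phi,v\rangle=\langle\phi,\rho_{\iota(\alpha)}(S_\alpha(a))v\rangle$, the evaluation $V^*\otimes V\to\mathbf 1$ is governed by the $S\otimes\mathrm{id}$ (first) diagram of (H7), exactly as your index paragraph computes, and the coevaluation by the $\mathrm{id}\otimes S$ (second) one, so your labels should be swapped. Second, pairing $(V^*)_g\otimes V_{g^{-1}}$ for non-identity $g$ uses the components of $\nabla\circ(S\otimes\mathrm{id})\circ\Delta(a)$ of degree $\mathrm{id}^\circ_{g^{-1}}$. So the right-hand side $\eta\sigma'\epsilon(a)$ must be read (as the paper effectively does when it writes $\delta_{i,j}\eta(1)$ in Lemma~\ref{l3An}) as $\epsilon(a)$ times the unit on all these degrees, not only on the all-identity square.
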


\subsection{Bialgebroids}\label{sec-bialgebroids}
Quantum groups based on solutions of the dynamical Yang--Baxter equation and their categories of representations are not immediately captured by the language of Hopf algebras and braided categories as their non-dynamical counterpart. This fact has led to the development of new algebraic notions and constructions at various levels of generalization and abstraction. The main focus of this research was the
case where the dynamical variable takes values in a vector space, while for our application to RSOS models it takes values in a finite set.
Here is a short review of the literature and a comparison with our approach.

The prototype notion is that of an $\mathfrak h$-bialgebroid of Etingof and Varchenko \cite{EtingofVarchenko1998-2} in the setting of Lu's bialgebroids \cite{Lu1996}. In \cite{EtingofVarchenko1998-2} the dynamical variable is taken to live in the dual $\mathfrak h^*$ of a finite dimensional abelian Lie algebra $\mathfrak h$, and one considers the field $L$ of meromorphic functions on $\mathfrak h^*$, with its action of $\mathfrak h^*$ by translations $T_\alpha f(\lambda)=f(\lambda+\alpha)$, ($\alpha\in\mathfrak h^*,f\in L$).  An $\mathfrak h$-algebra is a $\mathfrak h^*$-bigraded algebra $A=\oplus_{\alpha,\beta\in\mathfrak h^*} A_{\alpha\beta}$ over $\mathbb C$ with two algebra embeddings $\mu_l,\mu_r\colon\colon M\to A_{00}$, called moment maps, such that for $a\in A_{\alpha\beta}$,
\begin{equation}\label{e-moment}
  \mu_l(f)a=a\mu_l(T_\alpha f),\quad
  \mu_r(f)a=a\mu_r(T_\beta f).
\end{equation}
An $\mathfrak h$-bialgebroid is a
$\mathfrak h$-algebra with a coproduct, which by definition is a linear map
$A\to A\otimes A$ sending $A_{\alpha\beta}$ to
$\oplus_\gamma A_{\alpha\gamma}\otimes_{L}A_{\gamma\beta}$, obeying
bialgebra-type axioms. One then obtains a monoidal category of
modules, which are $\mathfrak h^*$-graded vector space over $L$ and
where the algebra acts by difference operators in the dynamical
variables, see also \cite{Xu2001}.  This structure was further
generalized by Donin and Mudrov \cite{DoninMudrov2005},
\cite{DoninMudrov2006}. The behaviour of $\mu_r$, $\mu_l$ is
formalized with the notion of a base algebra \cite[Section
4.1]{DoninMudrov2006} in a monoidal category: a base algebra in a
monoidal category $\mathcal C$: recall that the centre $Z(\mathcal C)$
of $\mathcal C$ is a braided monoidal category consisting of pairs
$(L,\tau)$ where $L$ is an object of $C$ and $\tau$ is a collection
$\tau_X\in \operatorname{Hom}_{\mathcal C}(L\otimes X,X\otimes L)$ of
natural isomorphisms. A base algebra in $\mathcal C$ is a commutative
algebra in $Z(\mathcal C)$. The main example is when $\mathcal C$ is
the category of modules over a Hopf-algebra.
 Kalmykov and Safronov
\cite{KalmykovSafronov2022} considered a further generalization. They define the category HC of
Harish-Chandra bimodules over a base algebra $(L,\tau)$ as the
category of left $L$-modules in $\mathcal C$. In this context
bialgebroids are defined as bialgebra objects in a suitable tensor product
$\mathrm{HC}\otimes\mathrm{HC}$.

Such an object arises naturally from $\pi$-graded vector spaces and
$\pi^2$-graded bialgebras: the category $\operatorname{Vect}_\pi$ of
$\pi$-graded vector spaces comes, like any monoidal category, with a
canonical base algebra, namely the unit object $\mathbf 1$, which can
be identified with the algebra of functions on the set of objects of
$\pi$. A $\Gamma$-graded bialgebra $A$, with $\Gamma=\pi^2$ is in
particular a $\Gamma^\circ$-graded algebra for the vertical groupoid
$\Gamma^\circ$. Objects of $\Gamma^\circ$ are arrows in $\pi$ and
taking the direct sum of components graded by $\Gamma^\circ$ with
fixed source and target, we obtain a decomposition with components
$A_{\alpha\beta}$ labeled by pairs of arrows $\alpha,\beta$ of
$\pi$. We have two embeddings $\mu_l,\mu_r$ of the base algebra $L$ of
functions on objects of $\pi$: for $f\in L$ and
$a\in A_{\alpha\beta}$, we have
$\mu_l(f)a=f(s(\alpha))a, a\mu_l(f)=f(t(\alpha))a$ and
$\mu_r(f)a=f(s(\beta))a, a\mu_r(f)=f(t(\beta))a$.

In fact, in our main example of Jimbo--Miwa--Okado restricted models, see Section \ref{sec-4}, the groupoid
$\pi$ is a finite full subgroupoid of the transformation groupoid of the weight lattice $P$
of a split algebraic torus $H\cong (\mathbb C^\times)^r$. In this case we can identify our $\pi^2$-graded bialgebras as special cases of
Harish-Chandra bialgebroids, see Definition 3.29 and Example 3.30 in
\cite{KalmykovSafronov2022}. These are essentially $\mathfrak h^*$-bialgebroids
for the Lie algebra $\mathfrak h$ of $H$ but with some modifications in the definition:
(i) the base algebra is taken to be the algebra of regular functions $\mathcal O(\mathfrak h^*)$,
(ii) the algebra morphisms $\mu_l,\mu_r$ are not required to be embeddings, (iii) the
bigrading is by the weight lattice $P\subset\mathfrak h^*$. The additional condition
characterizing $\pi^2$-graded bialgebras is a support condition for $A$ for the four
module structures (left and right action via $\mu_l,\mu_r$)
over the base algebra $\mathcal O(\mathfrak h^*)$: namely, $A$
is supported on the subset of $(\mathfrak h^*)^4$ of weights forming the objects
of a commutative square in $\pi^2$.
It would be
interesting to adapt the Tannakian reconstruction theorem of
\cite{KalmykovSafronov2022} to this class of bialgebroids.


\section{Yang--Baxter equation}\label{sec-3} Let $k=\mathbb C$ and let $V\in\operatorname{Vect}^f_{\mathbb{C}}(\pi)$ be a finite dimensional $\pi$-graded vector space for a finite groupoid $\pi$.

%
%
%
An {\em $R$-matrix} is a meromorphic function $u\mapsto\check R(u)\in\operatorname{End}(V\otimes V)$ of the {\em spectral parameter} $u\in\mathbb C$ with values in the endomorphisms of $V\otimes V$, obeying the Yang--Baxter equation
\begin{equation}\label{e-YB}
  \check R^{(23)}(u-v)\check R^{(12)}(u)
  \check R^{(23)}(v)
  =  \check R^{(12)}(v)
\check R^{(23)}(u)
\check R^{(12)}(u-v)
\end{equation}
in $\operatorname{End}(V\otimes V\otimes V)$, together with the inversion relation
\begin{equation}
\label{inv}
    \R(u)\R(-u)=\operatorname{Id}_{V\otimes V},
\end{equation}
for all generic values of the spectral parameters $u,v$.

By restricting the operators to the graded component corresponding to a fixed source $a\in\pi_0$, one obtains the standard dynamical formulation, characterized by the explicit dependence of the operator on the dynamical variable,
\[
    \check R(u,a)\in\oplus_{\mu\in\pi,s(\mu)=a}\operatorname{End}_{\mathbb{C}}((V\otimes V)_{\mu}).
\]
For a transformation groupoid $\pi=G\rtimes M$ for the action of a group $G$ on a set $M$, where $\pi(a,b)=\{(a,g)\in M\times G\,|\,g\cdot a=b$ for $a,b\in \pi_0=M\}$, the Yang--Baxter equation
\eqref{e-YB} becomes the dynamical Yang--Baxter equation for $\check R(u,a)$:
\begin{align*}
  \check R^{(23)}(u-v,g^{(1)}a)\check R^{(12)}(u,a)
  &\check R^{(23)}(v,g^{(1)} a)
  \\
  &=  \check R^{(12)}(v,a)
\check R^{(23)}(u,g^{(1)} a)
\check R^{(12)}(u-v,a).
\end{align*}
Here $\check R^{(23)}(v,g^{(1)}a)$ acts as $R^{(23)}(v,g\cdot a) $ on $V_{\alpha}\otimes V_\beta\otimes V_\gamma$
where $\alpha=(a,g)$. For the weight lattice acting on the dual of a Cartan subalgebra by translation one obtains the
standard form \eqref{e-DYBE} of the dynamical Yang--Baxter equation for $R=P\check R$ and $P u\otimes v=v\otimes u$.

 We can define the components of the $R$-matrix by restricting it to $V_i\otimes V_j$ for composable arrows $i,j\in\pi_1$:
\[
  \check R(u)|_{V_i\otimes V_j}=\oplus_{k,l\in\pi_1}
  \check R_{ij}^{kl}(u).
\]
The sum is over $k,l$ such that $j\circ i=l\circ k$, and the component $\R^{kl}_{ij}(u)$ lives in
\[
  \R_{ij}^{kl}(u)\in\operatorname{Hom}_{\mathbb{C}}
  (V_i\otimes V_j , V_k\otimes V_l ).
\]
This is a homogeneous component for the $\pi^2$-grading of $\operatorname{End}(V\otimes V)$, see Example \ref{ex-2.5}.

Throughout this paper, we will often represent the $\pi^2$-graded component of an operator graphically via the commutative square corresponding to it in the double groupoid. For example, the component of the $R$-matrix will be depicted as
\tikzset{>=latex}
\[
\begin{tikzpicture}[scale=1.5]
  \draw[->,dashed](-.6,.5) node[left]{  $\R_{ij}^{kl}(u)=$}
  -- (1.7,.5);
  \draw[->,dashed](.5,-.6) 
  -- (.5,1.6);
  \draw[->](0,1) node[left]{$a$} -- (.5,1) node[above,fill=white]{$k$} -- (1,1)
  node[right]{$d$};
  \draw[->](1,1) -- (1,.5) node[right , fill=white]{$l$} -- (1,0);
  \draw[->](0,1) -- (0,.5) node[left, fill=white]{$i$} -- (0,0);
  \draw[->]node[left]{$b$}(0,0) -- (.5,0) node[below,fill=white]{$j$}-- (1,0)
  node[right]{$c$};
 \end{tikzpicture}
\]
where the dashed arrows indicate the convention for the direction in which the operator acts---sending $V_i\otimes V_j$ to $V_k\otimes V_l$---and will be omitted in subsequent diagrams.

\subsection{Crossing symmetry and
  rotationally invariant \texorpdfstring{$R$}{R}-matrix}\label{sec-3.1}
In this subsection, we introduce a special family of $R$-matrices that possess rotational symmetry.
This symmetry follows from a property called crossing symmetry, which is inspired by the theory of scattering
matrices in 2-dimensional integrable quantum field theory. See \cite{ReshetikhinSemenovTianShansky1990} for
versions of this property in representation theory. The crossing symmetry is formulated in terms of
a non degenerate bilinear form. Two cases will be distinguished. In the
orthogonal case (including the symplectic case) the bilinear form is defined on the underlying vector space $V$
of the $R$-matrix. In the general linear case we extend the $R$-matrix to $V\oplus V^*$ and define the bilinear
form there.

We begin by defining the notion of an inner product on $\pi$-graded vector spaces.
\begin{definition}
    An {\em inner product on a $\pi$-graded vector space $V$} is a morphism of $\pi$-graded vector spaces $\omega:V\otimes V\to \mathbf 1$ that is bilinear and nondegenerate. 
\end{definition}

Let $\omega_{i,j}:=\omega\vert_{V_i\otimes V_j}$ be the inner-product coefficients. From the definition one can see that the inner product is non-zero only for composable arrows $i$ and $j$. Moreover, since $\mathbf{1}$ has nonzero components only when graded by identity arrows, $i$ and $j$ must be mutual inverses, forcing $\omega_{i,j}$ to be of the form $\omega_{i,j}=\delta_{i,j^{-1}}\omega_{i,i^{-1}}$. We will use the notation $\omega_{i,-i}:=\omega_{i,i^{-1}}$.

Let $\sigma :\mathbf{1}\to V\otimes V$ be dual to the  inner product, defined by relations
\begin{equation}
\label{os}
    (\mathrm{id}_V\otimes\omega)(\sigma\otimes\mathrm{id}_V)=\mathrm{id}_V,\quad (\omega\otimes\mathrm{id}_V)(\mathrm{id}_V\otimes\sigma)=\mathrm{id}_V.
\end{equation}
We define the coefficient $\sigma^{i,-i}$ as the coefficient of the image of $1$ in the graded component $V_i\otimes V_{i^{-1}}\in (V\otimes V)_{\operatorname{id}^\bullet}$, i.e $\sigma (1)=\oplus_{i\in\pi_1} \sigma^{i,-i}$. We will graphically represent coefficients $\omega_{i,j}$ and $\sigma^{i,j}$ as
\begin{center}
$\omega_{i,j}=$\begin{tikzcd}
 a\arrow[d,"i"']\arrow[dd,bend left=40]\\
 b\arrow[d,"j"']\\
 c 
\end{tikzcd}$\qquad$ and $\qquad$
$\sigma^{i,j}=$\begin{tikzcd}
 a\arrow[d,"i"]\arrow[dd,bend right=40]\\
 b\arrow[d,"j"]\\
 c 
\end{tikzcd}.
\end{center}
The relations (\ref{os}) read as 
\begin{align*}
    \omega_{i,-i}\sigma^{-i,i}=\operatorname{id}_{V_{i^{-1}}} \quad\text{and}\quad \omega_{-i,i}\sigma^{i,-i}=\operatorname{id}_{V_{i^{-1}}},
\end{align*}
and are depicted as
\begin{center}
 \begin{tikzcd}
a\arrow[d,"i^{-1}"]\arrow[dd,bend right=40]  \\
 b\arrow[dd,bend left=40] \arrow[d,"i"]\\
 a\arrow[d,"j^{-1}"']\\
 c  
\end{tikzcd}
 $=\delta_{i,j}\qquad$ and $\qquad$ 
 \begin{tikzcd}
a\arrow[d,"j^{-1}"']\arrow[dd,bend left=40]  \\
 b\arrow[dd,bend right=40] \arrow[d,"i"]\\
 c\arrow[d,"i^{-1}"]\\
 b  
\end{tikzcd}$=\delta_{i,j}$.
\end{center}
We equip $V\in\operatorname{Vect}_{\mathbb{C}} (\pi)$ with an inner product $\omega:V\otimes V\to\mathbf{1}$.
\begin{definition}
  Let us call an $R$-matrix $\R(u)$ {\em crossing symmetric} with crossing parameter $\lambda\in \mathbb C$ with respect
  to the inner product $\omega$ if
   \begin{equation}
     \label{oRR}
     \begin{split}
         \omega^{(23)}\R^{(12)}(u+\lambda)\R^{(23)}(u)&=\alpha(u)\omega^{(12)},\\
        \R^{(12)}(u)\R^{(23)}(u+\lambda)\sigma^{(12)}&=\alpha(u)\sigma^{(23)},
         \end{split}
     \end{equation}
     for some meromorphic function $\alpha$ of $u\in\mathbb{C}$.
     \end{definition}
Let us define the operator $\R^{*_1}(u)\in\operatorname{End}(V\otimes V)$ as 
\begin{equation*}
    \R^{*_1}(u):=(\mathrm{id}_V\otimes\mathrm{id}_V\otimes\omega)(\mathrm{id}_V\otimes\R(\lambda-u)\otimes \mathrm{id}_V) (\sigma\otimes\mathrm{id}_V\otimes \mathrm{id}_V)
\end{equation*}    
and call it {\em $R$-matrix rotation by} $90^\circ $. We will also use the abbreviated notation
\begin{align*}
     \R^{*_1}(u)=\omega^{(23)}\R^{(12)}(\lambda -u)\sigma^{(01)}.
\end{align*}
\begin{definition}
\label{Rrs}
 A {\em rotationally invariant $R$-matrix} is an $R$-matrix $\R (u)$ such that $\R^{*_1}(u)=\R(u)f(u)$ for some function $f$ of $u\in\mathbb{C}$.
\end{definition}
\begin{prop}
    The $R$-matrix $\R(u)$ that satisfies (\ref{oRR}) is a rotationally invariant $R$-matrix.
\end{prop}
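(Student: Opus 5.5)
We prove that $\R^{*_1}(u)$ is proportional to $\R(u)$ by contracting the crossing relation against the unitarity relation (\ref{inv}) and then removing the leftover loop with the zig-zag identities (\ref{os}). Concretely, start from the second relation in (\ref{oRR}), $\R^{(12)}(u)\R^{(23)}(u+\lambda)\sigma^{(12)}=\alpha(u)\sigma^{(23)}$, viewed as an identity of maps $V\to V\otimes V\otimes V$. Replace $u$ by $-u$ and multiply on the left by $\R^{(12)}(u)$. By the inversion relation (\ref{inv}), $\R^{(12)}(u)\R^{(12)}(-u)=\operatorname{Id}$. This gives
\[
\R^{(23)}(\lambda-u)\sigma^{(12)}=\alpha(-u)\,\R^{(12)}(u)\sigma^{(23)} .
\]
Both sides are maps $V\to V^{\otimes 3}$.

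Next, apply $\omega$ to factors $3$ and $4$ after tensoring with one more copy of $V$ on the right. In the abbreviated notation this means bending the third strand down with $\omega^{(34)}$, so that we obtain maps $V\otimes V\to V\otimes V$.
\begin{itemize}
\item On the right-hand side, $\omega^{(34)}\R^{(12)}(u)\sigma^{(23)}=\R^{(12)}(u)\,\omega^{(34)}\sigma^{(23)}$, since the two operators act on disjoint factors (interchange in $\operatorname{Vect}_\pi$). The second relation in (\ref{os}) turns $\omega^{(34)}\sigma^{(23)}$ into the identity, so the right-hand side becomes $\alpha(-u)\R(u)$.
\item On the left-hand side, $\omega^{(34)}\R^{(23)}(\lambda-u)\sigma^{(12)}$ is, after relabelling factors $0,1,2,3\leftrightarrow 1,2,3,4$, exactly $\omega^{(23)}\R^{(12)}(\lambda-u)\sigma^{(01)}=\R^{*_1}(u)$.
\end{itemize}
Hence $\R^{*_1}(u)=\alpha(-u)\R(u)$, which is rotational invariance with $f(u)=\alpha(-u)$. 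The same argument applied to the first relation in (\ref{oRR}), bending with $\sigma$ instead, gives a consistency check, though it is not needed.

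The main obstacle is bookkeeping of the $\pi$-grading and leg positions. We must check that inserting $\sigma$ on legs $(1,2)$ and $\omega$ on legs $(3,4)$ is compatible with the graded tensor product, so that the sum over arrows $i$ with $\sigma^{i,-i}$ matches the components of $\R^{*_1}$. We must also check that the zig-zag identity used is the correct one of the two in (\ref{os}), given the orientation conventions of the pictures. I would verify this componentwise using the square pictures. Write $\R^{*_1}(u)$ restricted to $V_i\otimes V_j$ as the sum over $m$ of $\sigma^{m,-m}$, followed by $\R^{\,\cdot\,\cdot}_{m^{-1} i}(\lambda-u)$, followed by $\omega$ on the last pair. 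Then compare this with the component of $\R^{(12)}(u)\sigma^{(23)}$ after contraction, using $\omega_{-i,i}\sigma^{i,-i}=\operatorname{id}$. If the leg ordering turns out to require the first relation of (\ref{os}), I would instead derive the identity from the first relation of (\ref{oRR}) by composing on the right with $\R^{(23)}(-u)$ and bending with $\sigma$ on the left. This yields the same conclusion up to replacing $\alpha(-u)$ by a reciprocal-type factor, which is also an admissible $f$.
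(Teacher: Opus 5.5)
Your argument is correct and is essentially the paper's proof: use the inversion relation (\ref{inv}) to rewrite the second crossing identity as $\R^{(23)}(\lambda-u)\sigma^{(12)}=\alpha(-u)\R^{(12)}(u)\sigma^{(23)}$. Then contract the last two legs with $\omega$ to obtain $\R^{*_1}(u)=\alpha(-u)\R(u)$.

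The only slip is cosmetic. The snake identity that removes $\omega^{(34)}\sigma^{(23)}$ is the \emph{first} relation in (\ref{os}), $(\mathrm{id}_V\otimes\omega)(\sigma\otimes\mathrm{id}_V)=\mathrm{id}_V$, not the second. Since both relations in (\ref{os}) are assumed, your fallback via the first relation of (\ref{oRR}) is not needed.
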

\begin{proof}
    The second relation in (\ref{oRR}) is equivalent to 
    \begin{align*}
        \R^{(12)}(-u)\sigma^{(23)}\alpha(u)=\R^{(23)}(u+\lambda)\sigma^{(12)},
    \end{align*}
   due to the inverse relation (\ref{inv}). This, together with (\ref{os}) gives
    \begin{align*}
        \R^{*_1}(u)=\omega^{(23)}\R^{(12)}(\lambda -u)\sigma^{(01)}=\omega^{(23)}\R^{(01)}(u)\sigma^{(12)}\alpha(-u)\\
        =\omega^{(23)}\sigma^{(12)}\R^{(01)}(u)\alpha(-u)=\R(u)\alpha(-u).
    \end{align*}
\end{proof}


%

When treating the general linear case, we adopt a modified definition of the rotations of $R$-matrix: instead
of $V$ we take the direct sum of $V\oplus V^*$ and consider an inner product of the form
$\genfrac(){0pt}{}{0\;\omega}{\omega^*\;0}$
We equip $V\oplus V^*$ with a nondegenerate bilinear map $\omega:V\otimes V^*\to \mathbf 1$
and its dual map $\sigma:\mathbf 1\to V^*\otimes V $ that satisfies 
\begin{equation}\label{sigmaii}
    (\mathrm{id}_{V^*}\otimes\omega)(\sigma\otimes\mathrm{id}_{V^*})=\mathrm{id}_{V^*},\qquad (\omega\otimes\mathrm{id}_{V})(\mathrm{id}_{V}\otimes\sigma)=\mathrm{id}_{V}
\end{equation}
We also introduce a map $\omega^*:V^*\otimes V\to\mathbf 1$ whose coefficients $\omega^*_{-i,i}:=\omega^*|_{V^*_{-i}\otimes V_i}$ are of the form
\begin{equation}\label{omegastar}
\omega^*_{-i,i}=\frac{G(s(i))}{G(t(i))}\omega_{i,-i}\circ P_{-i,i},
\end{equation}
where $G:\pi_0\to\mathbb{C}$ is a function on the objects of groupoid $\pi$, and $P_{-i,i}:V^*_{i^{-1}}\otimes V_i\to V_i\otimes V^*_{i^{-1}}$ denotes the flip $v\otimes w\mapsto w\otimes v$, $v\in V^*_{i^{-1}}$, $w\in V_i$. Let $\sigma^*:\mathbf{1}\to V\otimes V^*$ be its dual map satisfying
\begin{equation}\label{sigmaiistar}
    (\mathrm{id}_V\otimes\omega^*)(\sigma^*\otimes\mathrm{id}_V)=\mathrm{id}_V,\quad (\omega^{*}\otimes\mathrm{id}_{V^*})(\mathrm{id}_{V^*}\otimes\sigma^*)=\mathrm{id}_{V^*}.
\end{equation}
From this and (\ref{omegastar}) it follows that the coefficient $\sigma^{*i,-i}$ is of the form
\begin{equation}\label{sigmastar}
    \sigma^{*i,-i}=\frac{G(t(i))}{G(s(i))}P_{i,-i}\circ\sigma^{-i,i}.
\end{equation}
\begin{definition}
    \label{Rr2}
    Let $\R_{VV}\in\operatorname{End}(V\otimes V)$ be an $R$-matrix. The operators $\R_{V^*V}(u)\in\operatorname{Hom}(V^*\otimes V, V\otimes V^*)$, $\R_{V^*V^*}(u)\in\operatorname{Hom}(V^*\otimes V^*,V^*\otimes V^*)$ and $\R_{VV^*}(u)\in\operatorname{Hom}(V\otimes V^*, V^*\otimes V)$, given by 
\begin{equation*}
    \begin{split}
         &\R_{V^*V}(u):=(\omega^*\otimes\mathrm{id}_V\otimes\mathrm{id}_{V^*})\circ(\mathrm{id}_{V^*}\otimes\R_{VV}(\lambda-u)\otimes \mathrm{id}_{V^*})\circ (\mathrm{id}_{V^*}\otimes \mathrm{id}_V\otimes\sigma^*),\\
          &\R_{VV^*}(u):=(\mathrm{id}_{V^*}\otimes\mathrm{id}_{V}\otimes\omega)(\mathrm{id}_{V}\otimes\R_{VV}(\lambda-u)\otimes \mathrm{id}_{V}) (\sigma\otimes\mathrm{id}_V\otimes \mathrm{id}_{V^*}), \\
          &\R_{V^*V^*}(u):=(\mathrm{id}_{V^*}\otimes\mathrm{id}_{V^*}\otimes\omega)(\mathrm{id}_{V^*}\otimes\R_{VV^*}(\lambda-u)\otimes \mathrm{id}_V) (\sigma\otimes\mathrm{id}_{V^*}\otimes \mathrm{id}_{V^*}),
    \end{split}
\end{equation*}
are called {\em rotations by $-90^\circ$, $90^\circ$ and $180^\circ$ of matrix $\R_{VV}(u)$}, respectively.
\end{definition}
We will denote the components of these matrices by $(\R_{V^*V})^{k,-l}_{-i,j}(u)\in\operatorname{Hom}(V^*_{i^{-1}}\otimes V_j, V_k\otimes V^*_{l^{-1}})$, $(\R_{VV^*})^{-k,l}_{i,-j}(u)\in\operatorname{Hom}(V_i\otimes V^*_{j^{-1}}, V^*_{k^{-1}}\otimes V_{l})$ and $(\R_{V^*V^*})^{-k,-l}_{-i,-j}(u)\in\operatorname{Hom}(V^*_{i^{-1}}\otimes V^*_{j^{-1}}, V^*_{k^{-1}}\otimes V^*_{l^{-1}})$. The rotation relations now can be written as
\begin{equation}\label{Rrel}
    \begin{split}
     (\R_{V^*V})^{k,-l}_{-i,j}(u)&=\omega^*_{-i,i}(\R_{VV})_{j,l}^{i,k}(\lambda -u)\sigma^{*l,-l},\\
    (\R_{VV^*})^{-k,l}_{i,-j}(u)&=\sigma^{-k,k}(\R_{VV})_{k,i}^{l,j}(\lambda -u)\omega_{j,-j},\\
    (\R_{V^*V^*})^{-k,-l}_{-i,-j}(u)&=\sigma^{-k,k}(\R_{VV^*})_{k,-i}^{-l,j}(\lambda -u)\omega_{j,-j}.
    \end{split}
\end{equation}
Equivalently, one can define the rotation of $\R_{VV}(u)$ by $180^\circ$ as 
\begin{align*}
\R_{V^*V^*}'(u):=(\omega^*\otimes\mathrm{id}_{V^*}\otimes\mathrm{id}_{V^*})(\mathrm{id}_{V^*}\otimes\R_{V^*V}(\lambda-u)\otimes \mathrm{id}_V) (\mathrm{id}_{V^*}\otimes \mathrm{id}_{V^*}\otimes\sigma^*),
\end{align*}
which in the components takes the following form
\begin{equation}\label{Rprime}
     (\R_{V^*V^*}')^{-k,-l}_{-i,-j}(u)=\omega^*_{-i,i}(\R_{V^*V})_{-j,l}^{i,-k}(\lambda -u)\sigma^{*l,-l}.
\end{equation}
\begin{lemma}\label{rotpi}
 The rotation of $\R_{VV}(u)$ by $-180^\circ$ is the same as the rotation by $180^\circ$, i.e., $\R_{V^*V^*}'(u)=\R_{V^*V^*}(u)$.
\end{lemma}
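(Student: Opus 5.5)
The plan is to expand both sides of the claimed identity into compositions built only from $\R_{VV}$, $\omega,\sigma,\omega^*,\sigma^*$, and then compare them using the snake (zig-zag) relations \eqref{sigmaii}, \eqref{sigmaiistar} and the relation \eqref{omegastar} between $\omega^*$ and $\omega$. The key observation is that both $\R_{V^*V^*}$ and $\R'_{V^*V^*}$ are obtained from $\R_{VV}(\lambda-(\lambda-u))=\R_{VV}(u)$ by two successive rotations. The spectral parameter therefore comes out the same on both sides. Only the pattern of cups and caps differs.

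\emph{First step: write each side as a closed diagram.} Substitute the definition of $\R_{VV^*}(\lambda-u)$ into the third formula of \eqref{Rrel}. This gives
\[
(\R_{V^*V^*})^{-k,-l}_{-i,-j}(u)=\sigma^{-k,k}\,\sigma^{-m,m}\,(\R_{VV})^{\,\cdot\,\cdot}_{\cdot\,\cdot}(u)\,\omega_{\cdot,\cdot}\,\omega_{j,-j},
\]
in which $\R_{VV}(u)$ carries two $\sigma$-cups on its left-hand legs and two $\omega$-caps on its right-hand legs. Similarly, substitute the first formula of \eqref{Rrel}, evaluated at $\lambda-u$, into \eqref{Rprime}. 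Then $\R'_{V^*V^*}(u)$ becomes $\R_{VV}(u)$ with two $\omega^*$-caps and two $\sigma^*$-cups attached on the opposite sides. Diagrammatically, one side is the square for $\R_{VV}(u)$ rotated by $+180^\circ$ and the other is it rotated by $-180^\circ$. In each case the four legs of the square are bent around the two sides.

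\emph{Second step: convert between the two conventions.} Use \eqref{omegastar} and \eqref{sigmastar} to replace each $\omega^*_{-i,i}$ by $\frac{G(s(i))}{G(t(i))}\omega_{i,-i}P$, and each $\sigma^{*l,-l}$ by $\frac{G(t(l))}{G(s(l))}P\sigma^{-l,l}$. After this substitution both expressions involve only $\omega,\sigma$ and flips. Matching the component indices then shows that the same component $(\R_{VV})^{ab}_{cd}(u)$ appears on both sides. For a component $\R_{ij}^{kl}$ with square $a\to b\to c$, $a\to d\to c$, the index bookkeeping is done against the commutative square, reading its corners.

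\emph{Third step: compare the $G$ factors and the tensor contractions.} Two points remain to check. The first is that the product of $G$-ratios is trivial: the four ratios coming from the two $\omega^*$ and two $\sigma^*$ telescope around the boundary of the commutative square. Their product is a ratio of $G$ evaluated at corners, and it cancels because the square closes up, with opposite corners appearing once in the numerator and once in the denominator. The second is that the linear maps agree: pairing $\omega$ on $V_j\otimes V^*_{j^{-1}}$ against its flipped version reduces, by the zig-zag identities \eqref{sigmaii} and \eqref{sigmaiistar}, to the same contraction of the legs of $\R_{VV}(u)$ with the same dual vector spaces.

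I expect the main obstacle to be this last bookkeeping, and specifically the cancellation of the $G$ factors. The remaining bookkeeping is to keep track of which corner of the square is the source or target of each leg after two rotations, and to make sure that the flips $P$ compose to the identity rather than to a permutation of tensor factors. I would first do the computation diagrammatically. I would draw both $180^\circ$ rotations as the square with all four legs bent, and straighten them using the snake relations. Only after that would I verify the scalar $G$-factor on a single component with explicit corners $a,b,c,d$.
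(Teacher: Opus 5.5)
Your proposal is correct and is essentially the paper's proof. Both sides reduce to $\sigma^{-k,k}\sigma^{-l,l}(\R_{VV})^{j,i}_{l,k}(u)\,\omega_{i,-i}\omega_{j,-j}$. The $-180^\circ$ side picks up the factor $\frac{G(t(k))G(t(l))G(s(i))G(s(j))}{G(s(k))G(s(l))G(t(i))G(t(j))}$ from \eqref{omegastar} and \eqref{sigmastar}. This factor equals $1$ because $k\circ l=i\circ j$ forces $t(k)=t(i)$, $s(l)=s(j)$, $s(k)=t(l)$, $s(i)=t(j)$, exactly the telescoping you describe.
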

\begin{proof}
   From (\ref{Rrel}) and (\ref{Rprime}) we have
    \begin{align*}
        (\R_{V^*V^*}')^{-k,-l}_{-i,-j}(u)=\omega^*_{-i,i}\omega^*_{-j,j}(\R_{VV})_{l,k}^{j,i}(u)\sigma^{*k,-k}\sigma^{*l,-l}
    \end{align*}
    and
\begin{align*}
     (\R_{V^*V^*})^{-k,-l}_{-i,-j}(u)&=\sigma^{-k,k}\sigma^{-l,l}(\R_{VV})_{l,k}^{j,i}(u)\omega_{i,-i}\omega_{j,-j}.
\end{align*}
Using (\ref{omegastar}) and (\ref{sigmastar}) the first equation becomes
\begin{align*}
   (\R_{V^*V^*}')^{-k,-l}_{-i,-j}(u)=& \tfrac{G(t(k))G(t(l))}{G(s(k))G(s(l))}\sigma^{-k,k}\sigma^{-l,l}(\R_{VV})_{l,k}^{j,i}(u)\omega_{i,-i}\omega_{j,-j}\tfrac{G(s(i))G(s(j))}{G(t(i))G(t(j))}\\
   =&\tfrac{G(t(k))G(t(l))}{G(s(k))G(s(l))}(\R_{V^*V^*})^{-k,-l}_{-i,-j}(u)\tfrac{G(s(i))G(s(j))}{G(t(i))G(t(j))}.
\end{align*}
But $(\R_{VV})^{l,k}_{j,i}(u)$ is graded by the component \begin{tikzcd}
a \arrow[r, "l"] \arrow[d, "j"']
& b \arrow[d, "k"] \\
c \arrow[r, "i"']
& d 
\end{tikzcd} with $i\circ j=k\circ l$, which gives
\begin{align*}
    t(k)=t(i),\quad s(l)=s(j),\quad s(k)=t(l)\quad s(i)=t(j).
\end{align*}
This proves the statement.
\end{proof}
\begin{cor}\label{rotcor}
    The rotation of the operator $\R_{VV^*}(u)$ by $-180^\circ$ and by $180^\circ$ is the same, i.e.,
    \begin{align*}
        \omega^*_{-i,i}\omega_{j,-j}(\R_{VV^*})_{l,-k}^{-j,i}(u)\sigma^{-k,k}\sigma^{*l,-l}=\sigma^{*k,-k}\sigma^{-l,l}(\R_{VV^*})_{l,-k}^{-j,i}(u)\omega_{i,-i}\omega^*_{-j,j},
    \end{align*}
    and it is equal to the operator $ (\R_{V^*V})^{k,-l}_{-i,j}(u)$.
\end{cor}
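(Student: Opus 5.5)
I will follow the proof of Lemma \ref{rotpi} closely, replacing $\R_{VV}$ by $\R_{VV^*}$. The claimed identity has two parts. First, the rotation of $\R_{VV^*}(u)$ by $-180^\circ$ agrees with its rotation by $180^\circ$. Second, this common value equals $\R_{V^*V}(u)$.

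For the first part, I write both rotations componentwise in the same way as (\ref{Rrel}) and (\ref{Rprime}). Each rotation then becomes a product of two cap/cup coefficients on each side of the fixed component $(\R_{VV^*})^{-j,i}_{l,-k}(u)$. This is exactly the form displayed in the corollary. The two expressions differ only in two ways:
\begin{itemize}
\item on the $V$-strands the pair $\sigma^*,\omega^*$ is replaced by $\sigma,\omega$ (and the reverse on the $V^*$-strands);
\item the order of the tensor factors is reversed, which is absorbed by the flips $P$.
\end{itemize}
Using (\ref{omegastar}) and (\ref{sigmastar}), I convert every $\omega^*$ and $\sigma^*$ into $\omega$ and $\sigma$. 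This produces a scalar prefactor, which is a product of ratios $G(t(\cdot))/G(s(\cdot))$. There is one such ratio for each of the labels involved, namely the two starred maps on the left and the two on the right.

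The main step is to show that this scalar equals $1$. I read off the commutative square grading the component $(\R_{VV^*})^{-j,i}_{l,-k}$, as in Example~\ref{ex-2.5}. For a $V^*$-strand the relevant arrow is the inverse $k^{-1}$ or $j^{-1}$, so its source and target are interchanged. The commutativity of the square gives identities among the sources and targets of $i,j,k,l$. These are analogous to $t(k)=t(i)$, $s(l)=s(j)$, $s(k)=t(l)$, $s(i)=t(j)$ in Lemma \ref{rotpi}, with some arrows inverted. With these identities the $G$-factors cancel in pairs. The main obstacle is the bookkeeping: one has to check which arrows are inverted, so that each numerator $G$ meets the matching denominator $G$ rather than the opposite one. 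I would verify this by drawing the square with $V^*$-labels $k^{-1}$ and $j^{-1}$.

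For the second part, I substitute the second line of (\ref{Rrel}), which expresses $\R_{VV^*}$ through $\R_{VV}(\lambda-u)$, into the $-180^\circ$ rotation of $\R_{VV^*}(u)$. The argument becomes $\lambda-(\lambda-u)=u$, so the result is $\R_{VV}(u)$ with two extra pairs of cups and caps. On one side a $\sigma$ meets an $\omega$ on the same strand. By the zigzag identities (\ref{sigmaii}) these contract to $\delta$'s and identity maps. What remains is $\omega^*_{-i,i}\,(\R_{VV})^{i,k}_{j,l}(\lambda-u)\,\sigma^{*l,-l}$ evaluated at the original argument. Up to the relabeling $\lambda-u\leftrightarrow u$ coming from the double rotation, this is the first line of (\ref{Rrel}), that is, $(\R_{V^*V})^{k,-l}_{-i,j}(u)$. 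I would check the spectral-parameter shift carefully at this point: each rotation contributes $u\mapsto\lambda-u$, so a rotation by $180^\circ$ composed with a rotation by $90^\circ$ yields net argument $\lambda-u$. This is consistent with $\R_{V^*V}$ being a single rotation by $-90^\circ$ of $\R_{VV}$.
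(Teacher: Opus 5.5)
Your proposal is correct. Its second half is essentially the paper's first step, read in the opposite direction: the paper inverts the second line of \eqref{Rrel} and substitutes into the first, while you substitute forward and contract $\omega_{j,-j}\sigma^{-j,j}$ and $\omega_{k,-k}\sigma^{-k,k}$. Either way, the $-180^\circ$ rotation of $\R_{VV^*}(u)$ becomes $(\R_{V^*V})^{k,-l}_{-i,j}(u)$.

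You differ from the paper on the $180^\circ$ side. The paper never compares the two rotations of $\R_{VV^*}$ directly. It proceeds in three moves:
\begin{enumerate}
\item it inverts \eqref{Rprime} to write $\R_{V^*V}(u)$ as a rotation of $\R'_{V^*V^*}(\lambda-u)$;
\item it replaces $\R'_{V^*V^*}$ by $\R_{V^*V^*}$ using Lemma~\ref{rotpi};
\item it unfolds $\R_{V^*V^*}$ through the third line of \eqref{Rrel}.
\end{enumerate}
So each side is shown separately to equal $\R_{V^*V}(u)$, and all the $G$-bookkeeping is inherited from Lemma~\ref{rotpi}.

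You instead rerun the argument of Lemma~\ref{rotpi} for $\R_{VV^*}$ itself, and this works. The component $(\R_{VV^*})^{-j,i}_{l,-k}$ is graded by the square with left $l$, bottom $k^{-1}$, top $j^{-1}$ and right $i$. The relation $k^{-1}\circ l=i\circ j^{-1}$ gives $s(l)=t(j)$, $s(k)=t(i)$, $t(l)=t(k)$ and $s(j)=s(i)$. With these, the factor $\frac{G(s(i))G(t(l))}{G(t(i))G(s(l))}$ coming from $\omega^*_{-i,i},\sigma^{*l,-l}$ on the left equals the factor $\frac{G(t(k))G(s(j))}{G(s(k))G(t(j))}$ coming from $\sigma^{*k,-k},\omega^*_{-j,j}$ on the right. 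Your route is self-contained and needs neither \eqref{Rprime} nor Lemma~\ref{rotpi}, at the cost of one more square computation with inverted arrows. The paper's route is purely formal substitution on top of the earlier lemma.

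One slip to fix in your second part. The $\pm180^\circ$ rotations in the statement involve $\R_{VV^*}(u)$ with no shift. Substituting the second line of \eqref{Rrel} therefore produces $\R_{VV}(\lambda-u)$ directly, not $\R_{VV}(u)$. After the two contractions you land literally on the first line of \eqref{Rrel}, and no ``relabeling $\lambda-u\leftrightarrow u$'' is needed. Such a relabeling would in any case change the operator.
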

\begin{proof}
Using the defining relation for $\sigma$ (\ref{sigmaii}), the second relation in (\ref{Rrel}) becomes
\begin{align*}
    (\R_{VV})_{k,i}^{l,j}(\lambda -u)=\omega_{k,-k}(\R_{VV^*})^{-k,l}_{i,-j}(u)\sigma^{-j,j},
\end{align*}
which together with the first relation in (\ref{Rrel}) gives
\begin{align*}
    (\R_{V^*V})^{k,-l}_{-i,j}(u)=\omega^*_{-i,i}\omega_{j,-j}(\R_{VV^*})^{-j,i}_{l,-k}(u)\sigma^{-k,k}\sigma^{*l,-l}.
\end{align*}
On the other hand, the relation (\ref{Rprime}) can be written as 
\begin{align*}
    (\R_{V^*V})^{k,-l}_{-i,j}(u)=\sigma^{*k,-k}(\R_{V^*V^*}')_{-k,-i}^{-l,-j}(\lambda -u)\omega^*_{-j,j},
\end{align*}
which, due to Lemma \ref{rotpi} and the third relation in (\ref{Rrel}) becomes
\begin{align*}
    (\R_{V^*V})^{k,-l}_{-i,j}(u)=\sigma^{*k,-k}\sigma^{-l,l}(\R_{VV^*})_{l,-k}^{-j,i}(u)\omega_{i,-i}\omega^*_{-j,j}.
\end{align*}
\end{proof}
\begin{lemma}\label{yb*}
Rotations satisfy the following Yang--Baxter equations
    \begin{align*}
      \R^{(23)}_{V^*V}(u_1-u_2)\R^{(12)}_{V^*V}(u_1)\R^{(23)}_{VV}(u_2)&= \R^{(12)}_{VV}(u_2)\R^{(23)}_{V^*V}(u_1)\R^{(12)}_{V^*V}(u_1-u_2),\\
 \R^{(23)}_{V^*V^*}(u_1-u_2)\R^{(12)}_{V^*V}(u_1)\R^{(23)}_{V^*V}(u_2)&= \R^{(12)}_{V^*V}(u_2)\R^{(23)}_{V^*V}(u_1)\R^{(12)}_{V^*V^*}(u_1-u_2).
    \end{align*}
\end{lemma}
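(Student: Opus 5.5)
The plan is to derive both identities from the Yang--Baxter equation \eqref{e-YB} for $\R_{VV}$ by "bending" one strand with the duality maps. The first identity is obtained by bending the first strand. The second is obtained by applying the same procedure to the first identity, bending one more strand.

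For the first identity, I would start from \eqref{e-YB} for $\R_{VV}$ on $V\otimes V\otimes V$, with spectral parameters chosen so that the strand to be bent carries $\lambda-u_1$. Concretely, I take the Yang--Baxter equation in the form
\[
\R^{(23)}(u_2-u_1+\lambda)\,\R^{(12)}(\lambda-u_1)\,\R^{(23)}(u_2)=\R^{(12)}(u_2)\,\R^{(23)}(\lambda-u_1)\,\R^{(12)}(u_2-u_1+\lambda)
\]
with the first-factor strand moving through the others, relabeled appropriately. I then tensor with $V^*$ on both ends, precompose with $\sigma^*$ placed to the right, and postcompose with $\omega^*$ placed to the left, exactly as in the definition of $\R_{V^*V}$ in Definition~\ref{Rr2}. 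This converts the $V$-strand that enters at the bottom-left into a $V^*$-strand entering from the left, in the same way that \eqref{Rrel} expresses $\R_{V^*V}$ through $\R_{VV}(\lambda-u)$.

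Next I would move the cups and caps past the factors not touching the bent strand; this is pure naturality of the tensor product in $\operatorname{Vect}_\pi$. Each factor that does touch the bent strand, sandwiched between $\omega^*$ and $\sigma^*$, is by definition a rotated $R$-matrix $\R_{V^*V}$ evaluated at $\lambda-(\lambda-u)=u$. The remaining $\R_{VV}(u_2)$ factor is untouched. The difference $u_1-u_2$ arises from $\lambda-(u_2-u_1+\lambda)$. One must check that the order of factors on both sides matches the claimed identity. This is most easily done in components: write both sides as sums over commutative squares using \eqref{Rrel}, and observe that the $\pi^2$-grading (Example~\ref{ex-2.5}) guarantees that the internal summation indices match.

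For the second identity, I apply the same bending to the first identity, now bending the second $V$-strand into a $V^*$-strand. Factors of type $\R_{VV}$ become $\R_{V^*V}$. Factors of type $\R_{V^*V}$ become $\R_{V^*V^*}$, expressed as in \eqref{Rprime} via $\omega^*,\sigma^*$. By Lemma~\ref{rotpi}, this is the same as $\R_{V^*V^*}$. Where the natural bending instead produces $\R_{VV^*}$-type expressions, I would invoke Corollary~\ref{rotcor} to identify them with $\R_{V^*V}$.

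The main obstacle is bookkeeping rather than concept. One difficulty is choosing the spectral parameters of the starting equation so that every bent factor gets argument exactly $\lambda-u$. The other is tracking the scalar factors $G(s)/G(t)$ from \eqref{omegastar} and \eqref{sigmastar}. I expect these $G$-factors to cancel between the two sides by the same source/target argument used in the proof of Lemma~\ref{rotpi}, since the outer boundary arrows agree on both sides. I would verify this first, componentwise.
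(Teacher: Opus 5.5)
Your strategy is the paper's. The paper substitutes the formulas of Definition~\ref{Rr2} in components, so each identity becomes the Yang--Baxter equation for $\R_{VV}$ sandwiched between $\omega^*$'s and $\sigma^*$'s. For the second identity it bends both $V^*$-strands at once, using the $\omega^*,\sigma^*$-form of $\R_{V^*V^*}$ (i.e.\ Lemma~\ref{rotpi}), and lands on the YBE with arguments $u_1-u_2$, $\lambda-u_2$, $\lambda-u_1$. Your two-step bending is equivalent to this.

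However, the starting equation you display is not an instance of \eqref{e-YB}. There the middle argument is the sum of the two outer ones, while in yours the middle is $\lambda-u_1$ and the outer ones are $\lambda-u_1+u_2$ and $u_2$. Also, with $\omega^*$ on the left and $\sigma^*$ on the right, the bent strand enters $\R_{VV}$ at the bottom right (from the cup) and leaves at the top left (into the cap). So it is the third tensor factor, not the first. The correct input is \eqref{e-YB} with $u=\lambda-u_1+u_2$, $v=\lambda-u_1$, read as
\[
\R^{(12)}_{VV}(\lambda-u_1)\,\R^{(23)}_{VV}(\lambda-u_1+u_2)\,\R^{(12)}_{VV}(u_2)=\R^{(23)}_{VV}(u_2)\,\R^{(12)}_{VV}(\lambda-u_1+u_2)\,\R^{(23)}_{VV}(\lambda-u_1).
\]
Bending its third strand produces the two sides of the first identity; this is exactly the paper's component formula. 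Note also that a single cap--cup pair cannot sandwich the two crossings on the bent strand separately. You need to insert the snake identity \eqref{sigmaiistar} between them; naturality alone is not enough.

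For the second identity, evaluate the first one at $(u_1,u_2)\mapsto(\lambda-u_1+u_2,\lambda-u_1)$. Then bend the $V$-strand that enters at position $3$ and leaves at position $1$. Under this bending:
\begin{itemize}
\item $\R_{VV}(\lambda-u_1)$ becomes $\R_{V^*V}(u_1)$;
\item $\R_{V^*V}(\lambda-u_1+u_2)$ becomes $\R'_{V^*V^*}(u_1-u_2)$ by \eqref{Rprime};
\item $\R_{V^*V}(u_2)$ is untouched.
\end{itemize}
The result is the second identity with its two sides interchanged, once Lemma~\ref{rotpi} replaces $\R'_{V^*V^*}$ by $\R_{V^*V^*}$. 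No $\R_{VV^*}$ ever appears, so Corollary~\ref{rotcor} is not needed. There are also no $G$-factors to track: both sides receive identical caps and cups, and the only place $G$ enters is the equality $\R'_{V^*V^*}=\R_{V^*V^*}$, which is already Lemma~\ref{rotpi}.
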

\begin{proof}
 Using the definition \ref{Rr2} and writing the equations in the component form,
 \begin{align*}
      &\omega^*_{-i,i}(\R_{VV})_{l,n}^{i,p}(\lambda-u_1)(\R_{VV})^{n,o}_{m,r}(\lambda-u_1+u_2)(\R_{VV})_{j,k}^{l,m}(u_2)\sigma^{*r,-r}\\
      &= \omega^*_{-i,i}   (\R_{VV})_{l,n}^{p,o}(u_2)(\R_{VV})_{j,m}^{i,l}(\lambda-u_1+u_2)(\R_{VV})_{k,r}^{m,n}(\lambda-u_1)\sigma^{*r,-r},\\
 &\omega^*_{-i,i} \omega^*_{-j,j} (\R_{VV})_{r,o}^{m,p}(u_1-u_2)(\R_{VV})_{k,m}^{l,j}(\lambda-u_2)(\R_{VV})_{l,p}^{i,n}(\lambda -u_1)\sigma^{*o,-o}\sigma^{*r,-r}\\
 &= \omega^*_{-i,i} \omega^*_{-j,j}(\R_{VV})_{k,r}^{m,p}(\lambda-u_1)(\R_{VV})_{p,o}^{l,n}(\lambda-u_2)(\R_{VV})_{m,l}^{i,j}(u_1-u_2)\sigma^{*o,-o}\sigma^{*r,-r},
 \end{align*}
 it is obvious that the relations are satisfied due to the Yang--Baxter equation for $\R_{VV}(u)$.
\end{proof}
\begin{definition}\label{def-crossing-gl}
  Let us call an $R$-matrix $\R_{VV}(u)$ {\em crossing-symmetric} with crossing parameter $\lambda\in\mathbb C$ if
\begin{equation}\label{qu1}
\omega^{(23)}\R_{VV}^{(12)}(u+\lambda)\R^{(23)}_{V^*V}(u)=\alpha(u)\omega^{(12)},
\end{equation}
\begin{equation}\label{qu2}
\R^{(12)}_{V^*V}(u)\R^{(23)}_{VV}(u+\lambda)\sigma^{(12)}=\alpha(u)\sigma^{(23)},
 \end{equation}
 for some meromorphic function $\alpha$ of $u\in\mathbb C$.
\end{definition}
\begin{remark} This definition is a $\pi$-graded analogue of the definition of \cite{ReshetikhinSemenovTianShansky1990}.
\end{remark}
\begin{lemma}
\label{inversion}
    If $\R_{VV}(u)$ is crossing symmetric, then we have inversion identities
    \begin{align*}
        \R_{VV^*}(-u)\R_{V^*V}(u)=\alpha(u),\qquad \R_{V^*V}(u)\R_{VV^*}(-u)=\alpha(u).
    \end{align*}
\end{lemma}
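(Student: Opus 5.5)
We imitate the proof of rotational invariance given after (\ref{oRR}) and use the snake identities (\ref{sigmaii}), (\ref{sigmaiistar}) to move cups and caps. The idea is to rewrite the composite $\R_{VV^*}(-u)\R_{V^*V}(u)$ so that $\R_{VV^*}(-u)$ appears as the $90^\circ$ rotation of $\R_{VV}(\lambda+u)$. After that, crossing symmetry (\ref{qu1})--(\ref{qu2}) collapses the picture to $\alpha(u)$ times a zig-zag, which equals the identity.

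\textbf{First identity.} Insert the definition from Definition \ref{Rr2}:
\[
\R_{VV^*}(-u)=(\mathrm{id}_{V^*}\otimes\mathrm{id}_V\otimes\omega)(\mathrm{id}_V\otimes\R_{VV}(\lambda+u)\otimes\mathrm{id}_V)(\sigma\otimes\mathrm{id}_V\otimes\mathrm{id}_{V^*}),
\]
which in abbreviated notation is $\omega^{(23)}\R_{VV}^{(12)}(\lambda+u)\sigma^{(01)}$, acting on $V\otimes V^*$ placed in slots $2,3$. Composing with $\R_{V^*V}(u)$ in slots $2,3$ gives
\[
\R_{VV^*}(-u)\R_{V^*V}(u)=\omega^{(34)}\R^{(23)}_{VV}(\lambda+u)\sigma^{(12)}\R^{(34)}_{V^*V}(u)
\]
on $V^*\otimes V$ sitting in slots $3,4$. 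Since $\sigma^{(12)}$ and $\R^{(34)}_{V^*V}(u)$ act on disjoint slots, they commute. The expression therefore contains $\omega^{(34)}\R^{(23)}_{VV}(u+\lambda)\R^{(34)}_{V^*V}(u)$, which is (\ref{qu1}) shifted by one slot and equals $\alpha(u)\omega^{(23)}$. What remains is $\alpha(u)\,\omega^{(23)}\sigma^{(12)}$ acting on $V^*$ in slot $3$, followed by the identity on slot $4$. By the first relation in (\ref{sigmaii}) this is $\alpha(u)\,\mathrm{id}$.

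\textbf{Second identity.} Here the $90^\circ$ rotation cannot be expanded on the right, so instead expand $\R_{V^*V}(u)$ by its definition and use the second relation (\ref{qu2}). The cleaner route is to first establish the alternative expression
\[
\R_{VV^*}(-u)=(\omega\otimes\mathrm{id}\otimes\mathrm{id})(\mathrm{id}\otimes\R_{V^*V}(u+\lambda)'\otimes\mathrm{id})(\mathrm{id}\otimes\mathrm{id}\otimes\sigma)
\]
from Corollary \ref{rotcor}, which identifies the two ways of rotating. With this expression, $\R_{V^*V}(u)\R_{VV^*}(-u)=\alpha(u)$ follows by the same computation with (\ref{qu2}) and the second snake relation in (\ref{sigmaii}).

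A second option for the second identity avoids rotation altogether. Both composites are finite matrices on each $\pi$-graded component $(V^*\otimes V)_\gamma$ and $(V\otimes V^*)_\gamma$, for $\gamma$ fixed by the commutative square. A one-sided inverse of $\alpha(u)^{-1}\R_{V^*V}(u)$ between these spaces is two-sided as soon as the two spaces have equal dimension. Equality of dimensions holds because $V$ is of finite type and $\R_{V^*V}$ is generically invertible by the first identity.

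\textbf{Main obstacle.} The difficulty is the bookkeeping of slots and gradings. Checking that (\ref{qu1}) applies verbatim after the index shift needs care, as does confirming that the commutation of $\sigma^{(12)}$ past $\R^{(34)}_{V^*V}(u)$ respects the $\pi^2$-grading. The cleanest way is to verify the shift componentwise, using the component formulas (\ref{Rrel}).
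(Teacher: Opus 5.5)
Your proof of the first identity is correct and matches the paper's argument. You expand $\R_{VV^*}(-u)=\omega^{(34)}\R^{(23)}_{VV}(u+\lambda)\sigma^{(12)}$, slide $\sigma^{(12)}$ past $\R^{(34)}_{V^*V}(u)$, apply \eqref{qu1}, and finish with the first relation in \eqref{sigmaii}.

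The gap is in the second identity. You abandon the direct route because $\R_{VV^*}(-u)$ supposedly ``cannot be expanded on the right''. That reason is mistaken, and neither of your substitutes works as written.

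\textbf{The alternative expression.} The middle operator acts on $V^*\otimes V^*$, so it has to be $\R'_{V^*V^*}(u+\lambda)=\R_{V^*V^*}(u+\lambda)$, not an $\R_{V^*V}$. With that correction the formula holds; it comes from the definition of $\R_{V^*V^*}$ and \eqref{sigmaii}, not from Corollary~\ref{rotcor}. But then $\R_{V^*V}(u)\R_{VV^*}(-u)$ is expressed through $\R_{V^*V}$ and $\R_{V^*V^*}$, about which \eqref{qu2} says nothing. You would need an extra crossing relation for that pair, which you neither state nor derive.

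\textbf{The dimension argument.} This is circular. The first identity only makes $\R_{V^*V}(u)$ injective on each component, which gives $\dim(V^*\otimes V)_\gamma\le\dim(V\otimes V^*)_\gamma$. Finite type does not supply the reverse inequality: if $V$ lives on a single arrow $a\to b$ with $a\neq b$, then $(V^*\otimes V)_{\mathrm{id}_b}\neq0=(V\otimes V^*)_{\mathrm{id}_b}$. ``Generically invertible'' is exactly what remains to be shown. Equality can be extracted from injectivity on all components simultaneously, e.g.\ by a trace argument in the groupoid algebra, but the proposal contains no such argument.

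\textbf{The missing step.} It is simply the mirror image of your first computation. Expand $\R_{VV^*}(-u)$ exactly as before and note that $\omega$ acts on tensor factors disjoint from those of $\R_{V^*V}(u)$. You get
\[
\R_{V^*V}(u)\R_{VV^*}(-u)=(\mathrm{id}_V\otimes\mathrm{id}_{V^*}\otimes\omega)\circ(X\otimes\mathrm{id}_{V^*}),\qquad X=\R^{(12)}_{V^*V}(u)\R^{(23)}_{VV}(u+\lambda)\sigma^{(12)}.
\]
By \eqref{qu2}, $X=\alpha(u)(\mathrm{id}_V\otimes\sigma)$. The first relation in \eqref{sigmaii} then turns the right-hand side into $\alpha(u)\,\mathrm{id}_{V\otimes V^*}$. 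This is what the paper means by acting with $\omega^{(34)}$ on the left of \eqref{qu2}.
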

\begin{proof}
   By acting with $\sigma^{(01)}$ on the right side of the identity (\ref{qu1}) and using the definition of $\R_{V^*V}(u)$ and (\ref{sigmaii}), we obtain the first inversion relation. Similarly, acting with $\omega^{(34)}$ on the left of the identity (\ref{qu2}) and using the definition of $\R_{VV^*}(u)$, we get the other inversion relation.
\end{proof}

\section{Jimbo--Miwa--Okado RSOS models}\label{sec-4}
In this section, we consider a face model on a two dimensional square lattice built upon the affine Lie algebra $A_n^{(1)}$, $B_n^{(1)}$, $C_n^{(1)}$ or $D_n^{(1)}$ introduced by Jimbo, Miwa and Okado \cite{JimboMiwaOkado1988}. They found solutions of the star-triangle relation

\begin{equation}
\label{eqn:str}
\begin{split}
\sum_g \W\Big(\begin{matrix}
  f & g\\
  e & d
\end{matrix}\Big\vert u_2\Big)
\W\Big(\begin{matrix}
  b & c\\
  g & d
\end{matrix}\Big\vert u_1-u_2\Big)
\W\Big(\begin{matrix}
  a & b\\
  f & g
\end{matrix}\Big\vert u_1\Big)\\
=\sum_g \W\Big(\begin{matrix}
  a & b\\
  g & c
\end{matrix}\Big\vert u_2\Big)
\W\Big(\begin{matrix}
  a & g\\
  f & e
\end{matrix}\Big\vert u_1-u_2\Big)
\W\Big(\begin{matrix}
  g & c\\
  e & d
\end{matrix}\Big\vert u_1\Big).
\end{split}
\end{equation}
where the fluctuation variables $a, b,\dots$ are dominant affine weights. As observed in \cite{FelderICMP1995}, the star-triangle relation can be translated into a dynamical Yang--Baxter equation. We start with a review of the corresponding $R$-matrices.

The non-restricted case is revisited in Appendix A, where we establish several useful results, and we adapt them to the restricted case in the rest of this section. 

In the course of this, we also define rotations of the $R$-matrix appropriate for the $\mathfrak{gl}_n$ case, starting from the $A_{n-1}^{(1)}$-type $R$-matrix. The $R$-matrices constructed for the $B^{(1)}_n$, $C_n^{(1)}$, and $D_n^{(1}$ cases provide examples of rotationally invariant $R$-matrices as defined in the previous section.

\subsection{Unrestricted \texorpdfstring{$R$}{R}-matrix}\label{4.1} The coefficients of the Jimbo--Miwa--Okado Boltzmann weights depend on two parameters $\tau, L\in\mathbb{C}$ such that $\Im(\tau)>0$. Let $$[u]=\theta(\tfrac{1}{L} u,\tau)/(\tfrac{1}{L}\theta'(0,\tau)),$$
where
\[
  \theta(u,\tau)=-\sum_{n\in\mathbb Z}
  e^{i\pi (n+\frac12)^2\tau+2\pi i (n+\frac12) (u+\frac12)}
\]
is the odd Jacobi theta function. The function $f(u)=[u]$ is an odd entire function with first order zeros on the lattice $\Lambda=\textstyle \mathbb ZL+\mathbb Z L\tau$ and is normalized to have derivative $1$ at $u=0$. By the Jacobi triple product identity,
\begin{equation}\label{e-triple}
[u]=\frac L\pi\sin\frac{\pi u}L\prod_{m=1}^\infty
\frac{\left(1-e^{2\pi im\tau+2\pi i\frac uL}\right)\left(1-e^{2\pi im\tau-2\pi i\frac uL}\right)}
{\left(1-e^{2\pi im\tau}\right)^2}.
\end{equation}

Let $\mathfrak{g}$ denote one of the classical Lie algebras $A_{n-1}=\mathfrak{sl}_n$, $B_n=\mathfrak{so}_{2n+1}$, $C_n=\mathfrak{sp}_{2n}$ or $D_n=\mathfrak{so}_{2n}$ and $(\rho,\bar V=\mathbb{C}^m)$ its vector representation, where $m=n$, $2n+1$, $2n$ and $2n$ for $A_{n-1}$, $B_n$, $C_n$ and $D_n$, respectively. We realize the dual $\mathfrak h^*$ of the Cartan subalgebra as $\mathbb R^n$ for $B_n,C_n,D_n$ and as the subspace of $\mathbb R^n$ of vectors with zero coordinate sum for $A_{n-1}$. Let $\bar\varepsilon_{i}$ be the standard basis of $\mathbb R^n$. Let $\bar\varepsilon :=\sum_{i=1}^{n}\bar\varepsilon_i$ for $A_{n-1}$ and $\bar\varepsilon :=0$ in the other cases. We set $\varepsilon_i:=\bar\varepsilon_i-\frac{1}{n}\bar\varepsilon$, $\varepsilon_{-i}:=-\varepsilon_{i}$ and $\varepsilon_0:=0$. The defining representation $\bar V$ of $\mathfrak{g}$ has the following weight decomposition
\begin{align*}
\bar V=
\begin{cases}
\oplus_{i=1}^{n}\bar V_{\varepsilon_i}\textrm{ for }A_{n-1},\\
\oplus_{i=-n}^{n}\bar V_{\varepsilon_i}\textrm{ for }B_n,\\
\oplus_{\substack{i=-n\\ i\neq 0}}^{n}\bar V_{\varepsilon_i}\textrm{ for }C_n,D_n,
\end{cases}
\end{align*}
where the weight spaces are 1-dimensional $\bar V_{\varepsilon_i}=\mathbb{C} e_i$. The weights of $\bar V$ span $\mathfrak h^*$. The inner product for which the $\bar\varepsilon_i$ are orthonormal is the Weyl group invariant inner product normalized so that the long roots have norm $\sqrt2$ for $A_{n-1},B_{n},D_{n}$ and $2$ for $C_n$, as in Bourbaki.

For $a\in \mathfrak h^*_\mathbb C$ we write $a=\sum_{i=1}^n a_i\bar\varepsilon_i$, where $\sum_{i=1}^n a_i=0$ in the case of $A_{n-1}$. We also set 
\[
\text{$a_{-i}=-a_{i}\quad (i=1,\dots, n)$, for $B_n,C_n,D_n$ and  $a_0=-\tfrac12$ for $B_n$.}
\]




Let $E_{ij}$ be a matrix such that $E_{ij}e_k=\delta_{jk}e_i$. Define the (unnormalized) elliptic dynamical $R$-matrix with spectral parameter $u\in\mathbb C$ as 
\begin{equation}
\label{eqn:RM}
\R(u,a):=\sum_{\substack{i,j,k,l \\ \varepsilon_i+\varepsilon_j=\varepsilon_k+\varepsilon_l}}\R^{k,l}_{i,j}(u,a)E_{ki}\otimes E_{lj},
\end{equation}
where we slightly abuse notation by using letters $i,j,k,l$ to denote indices of the set $\{\varepsilon_i\}_i$ rather than morphisms of the groupoid, which in this case is the transformation groupoid $\mathfrak h^*\rtimes P$ of the weight lattice $P$ acting on $\mathfrak h^*$, consisting of pairs $(a,\mu)\in \mathfrak h^*\times P$ with composition $(a_1,\mu_1)\circ(a_2,\mu_2)=(a_2,\mu_1+\mu_2) $ defined if $a_1=a_2+\mu_2$. Moreover, we let $\R(u,a)|_{\bar V_{\varepsilon_i}\otimes \bar V_{\varepsilon_j}}=\oplus_{k,l}\R^{\varepsilon_k,\varepsilon_l}_{\varepsilon_i,\varepsilon_j}(u,a):=\sum_{k,l}\R^{k,l}_{i,j}(u,a)E_{ki}\otimes E_{lj}$. The $R$-matrix component is graphically represented by a diagram with all four arrows fixed 
\begin{center}
$\R^{k,l}_{i,j}(u,a)=$\begin{tikzcd}
a \arrow[r, "\varepsilon_k"] \arrow[d, "\varepsilon_i"']\arrow[rd,phantom,"\R(u)"]
& b \arrow[d, "\varepsilon_l"] \\
c \arrow[r, "\varepsilon_j"']
& d 
\end{tikzcd}.
\end{center}
We distinguish two cases:
\begin{enumerate}
  \renewcommand{\labelenumi}{(\roman{enumi})}
    \item the $R$-matrix of type $A^{(1)}_{n-1}$;
    \item the $R$-matrices of types $B^{(1)}_n$, $C^{(1)}_n$ and $D^{(1)}_n$.
\end{enumerate}

In the $A_{n-1}^{(1)}$ case, the matrix element 
\begin{equation}
\label{RAn}
(\R_{VV})^{k,l}_{i,j}(u,a)=\mathcal{W}
\Big(\begin{matrix}
  a & a+\varepsilon_k\\
  a+\varepsilon_i & a+\varepsilon_i+\varepsilon_j
\end{matrix}\Big\vert u\Big)
\end{equation}
is such that the indices are positive, $i,j,k,l\in\{1,...,n\}$, and is given by the Jimbo--Miwa--Okado Boltzmann weights $\mathcal{W}$ described by the diagrams (1)-(3) below. 

The $R$-matrix element $\R^{k,l}_{i,j}(u,a)$ in the $B_n^{(1)}$, $C_n^{(1)}$ and $D_n^{(1)}$ cases is 
\begin{center}
$\R^{k,l}_{i,j}(u,a)=\mathcal{W}
\Big(\begin{matrix}
  a & a+\varepsilon_k\\
  a+\varepsilon_i & a+\varepsilon_i+\varepsilon_j
\end{matrix}\Big\vert u\Big),$
\end{center}
with Jimbo--Miwa--Okado Boltzmann weights $\mathcal{W}$ given by diagrams (1)-(6). The indices $i,j$ range over $-n,...,n$ for $B_n^{(1)}$ and over $-n,...,n$ excluding $0$ for $C_n^{(1)}$ and $D_n^{(1)}$. 

The non-vanishing $R$-matrix elements (Jimbo--Miwa--Okado Boltzmann weights) are of the following
form\footnote{We use a normalized version of the Boltzmann weights of \cite{JimboMiwaOkado1988} so that the
  coefficients (1) are equal to 1}
\\
$(1) $\begin{tikzcd}
a \arrow[r, "\varepsilon_i"] \arrow[d, "\varepsilon_i"']\arrow[rd,phantom,"\R(u)"]
& b \arrow[d, "\varepsilon_i"] \\
b \arrow[r, "\varepsilon_i"']
& d 
\end{tikzcd}$=1\textrm{  }(i\neq 0),$\\
$(2) $\begin{tikzcd}
a \arrow[r, "\varepsilon_i"] \arrow[d, "\varepsilon_i"']\arrow[rd,phantom,"\R(u)"]
& b \arrow[d, "\varepsilon_j"] \\
b \arrow[r, "\varepsilon_j"']
& d 
\end{tikzcd}$=\displaystyle\frac{[1][a_{i}-a_{j}-u]}{[1+u][a_{i}-a_{j}]}\textrm{  } (i\neq \pm j),$\\
$(3) $\begin{tikzcd}
a \arrow[r, "\varepsilon_j"] \arrow[d, "\varepsilon_i"']\arrow[rd,phantom,"\R(u)"]
& b \arrow[d, "\varepsilon_i"] \\
c \arrow[r, "\varepsilon_j"']
& d 
\end{tikzcd}$=\displaystyle\frac{[u]}{[1+u]}\sqrt{\frac{[a_i-a_{j}+1][a_i-a_j-1]}{[a_i-a_j]^2}}\textrm{  } (i\neq \pm j),$\\
$(4) $\begin{tikzcd}
a \arrow[r, "\varepsilon_j"] \arrow[d, "\varepsilon_i"']\arrow[rd,phantom,"\R(u)"]
& b \arrow[d, "-\varepsilon_j"] \\
c \arrow[r, "-\varepsilon_i"']
& a 
\end{tikzcd}$=\displaystyle\frac{[u][1][a_{i}+a_{j}+1+\lambda-u]}{[\lambda -u][1+u][a_{i}+a_{j}+1]}\sqrt{G_{a,i}G_{a,j}}\textrm{  } (i\neq j),$\\
$(5) $\begin{tikzcd}
a \arrow[r, "\varepsilon_i"] \arrow[d, "\varepsilon_i"']\arrow[rd,phantom,"\R(u)"]
& b \arrow[d, "-\varepsilon_i"] \\
b \arrow[r, "-\varepsilon_i"']
& a
\end{tikzcd}$=\displaystyle\frac{[1][2a_i+1-u]}{[1+u][2a_{i}+1]}
\!+\!\displaystyle\frac{[1][u][2a_i+1+\lambda-u]}{[\lambda -u][1+u][2a_{i}+1]}G_{a,i}\quad (i\neq0),$\\
$(6) $\begin{tikzcd}
a \arrow[r, "\varepsilon_0"] \arrow[d, "\varepsilon_0"']\arrow[rd,phantom,"\R(u)"]
& a \arrow[d, "\varepsilon_0"] \\
a \arrow[r, "\varepsilon_0"']
& a
\end{tikzcd}$=\displaystyle\frac{[\lambda+u][1][2\lambda-u]}{[\lambda-u][1+u][2\lambda]}
-\displaystyle\frac{[u][1]}{[1+u][2\lambda]}\sum_{j\neq0}\frac{[a_j+\frac12+2\lambda]}{[a_j+\frac12]}.$
\\
The factors in the formulas above are given by
\begin{align}\label{e-Ga}
G_a=&
\begin{cases}
\epsilon(a)\prod_{1\leq i <j\leq n}[a_{i}-a_j]\qquad\textrm{for }A_{n-1}^{(1)},\\
\epsilon(a)\prod_{i=1}^n h(a_{i})\prod_{1\leq i<j\leq n}[a_{i}-a_j][a_{i}+a_{j}] \qquad\textrm{for }B_n^{(1)},C_n^{(1)},D_n^{(1)},
\end{cases}\\
G_{a,i}=&\frac{G_{a+\varepsilon_i}}{G_a}=
\begin{cases}
    &1\qquad (i=0),\\
    &\sigma\frac{h(a_{i}+1)}{h(a_{i})}\prod_{j\neq\pm i,0}\frac{[a_{i}-a_{j}+1]}{[a_{i}-a_{j}]} \qquad(i\neq 0) 
\end{cases}\notag
\end{align}
where $\lambda$ is {\em the crossing parameter} given in the Table \ref{Tab:t1} together with the function $h(a)$ and the sign $\sigma$. The sign factor $\epsilon (a)$ is such that $\epsilon(a+\varepsilon_i)/\epsilon(a)=\sigma$. It is given by $\epsilon(a)=1$ in $A_{n-1}^{(1)}$, $B_n^{(1)}$ and $D_n^{(1)}$ cases, and in $C_n^{(1)}$ case it is given by 
\[
\epsilon(a)=
\begin{cases}
    1\qquad\text{$a$ even,}\\
    -1\qquad\text{$a$ odd,}
\end{cases}
\]
where $a$ is called even if $\sum_{i=1}^na_i$ is even, otherwise is odd.
\begin{table}[ht]
\caption{Factors and functions}
\label{tab}
\centering
\begin{tabular}{ |c|c|c|c|c| } 
 \hline
 Type & $A_{n-1}^{(1)}$ & $B_n^{(1)}$ & $C_n^{(1)}$ & $D_n^{(1)}$ \\ 
 \hline 
 $\lambda$ & $-\frac{n}{2}$ & $-n+\frac{1}{2}$ & $-n-1$ & $-n+1$ \\ 
  \hline
 $h(a)$ & $1$ & $[a]$ & $[2a]$ & $1$ \\
  \hline
 $\sigma$ & $1$ & $1$ & $-1$ & $1$ \\
 \hline
\end{tabular}
\label{Tab:t1}
\end{table} 

\begin{remark}\label{r-1} The function $x\mapsto [x]$ takes real values if $\tau$
  is purely imaginary and $L>0$, and its restriction to the interval
  $0<x<L$ is positive. There is then an open subset of
  $\mathfrak h^*$, which includes the affine alcove of the next
  section, on which the arguments of the square roots are positive,
  and where the square roots are defined to be positive. On this
  domain the star-triangle relation is verified.  The Boltzmann
  weights have then an analytic continuation to 2-valued functions on
  the complexification $\mathfrak h^*$. The star-triangle relation
  continues to hold by analytic continuation.
\end{remark}

\subsection{Restricted model: the setting}\label{4.2}
Let $\mathfrak{g}$ be one of the classical Lie algebras $A_{n-1}$
($n\geq 2$), $B_n$ ($n\geq 2$), $C_n$ ($n\geq 1$) or $D_n$
($n\geq 3$) and let $g=n,2n-1,n+1,2n$, respectively, be their dual
Coxeter numbers. Let $\alpha_i\in\mathfrak h^*$ be a system of roots and $\alpha_i^\vee\in\mathfrak h$ the
corresponding coroots. Denote by $\theta$ the highest root and $\rho$ the
half-sum of positive roots. The crossing parameter is
\begin{equation}\label{e-crossing}
  \lambda=-g(\theta,\theta)/4.
\end{equation}
In the restricted model the dynamical
variables are in the subset $P_{++}^{\ell+g}$ of the weight lattice $P$ consisting of regular dominant affine weights of level
$\ell+g$, namely
\[
  P_{++}^{\ell+g}=\{a\in P\,|\, \langle a,\alpha_i^\vee\rangle>0,\quad \langle a,\theta^\vee\rangle<\ell+g\},
\]
where $\theta^\vee$ is the coroot corresponding to $\theta$.
They are shifts by $\rho$ of the classical part of the dominant weights of level $\ell$
of the corresponding untwisted affine Kac--Moody algebra
$\mathfrak g^{(1)}$, which are highest weights of integrable representations.
The parameter $L$ is
\begin{equation}\label{e-L}
  L=\frac{(\theta,\theta)}2(\ell+g)
\end{equation}
It is an integer (even in the $C_n$ case) such that $L+2\lambda\geq 0$.
In terms of the invariant inner product $(\ ,\ )$ on $\mathfrak h^*$, the inequalities defining $P_{++}^{\ell+g}$ are
\[
  (a,\alpha_i)>0,\quad (a,\theta)<L.
\]

%
%
%
Explicitly, 
\begin{align*}
P_{++}^{\ell+g}=
\begin{cases}
\{a\in P_{++}
  \,|\, a_1-a_{n}< L\}&\textrm{ for }A_{n-1}^{(1)},\\
\{a\in P_{++}
  \,|\,a_1+a_2< L\}&\textrm{ for }B_n^{(1)},\\
\{a\in P_{++}
  \,|\, a_1< L/2\}&\textrm{ for }C_n^{(1)},\\
  \{a\in P_{++}
  \,|\,a_1+a_2< L\}&\textrm{ for }D_n^{(1)},
\end{cases}
\end{align*}
where $P_{++}$ is the set of regular dominant weights,
\begin{align*}
P_{++}=
\begin{cases}
\{a\in P
  \,|\, a_1>a_2> \cdots > a_{n}\}&A_{n-1},\\
\{a\in P
  \,|\,a_1 >a_2>\cdots > a_n, a_n> 0\}&B_n,\\
\{a\in P
  \,|\, a_1> a_2>\cdots > a_n,a_n> 0\}&C_n,\\
  \{a\in P
  \,|\,a_1> a_2>\cdots > a_n, a_{n-1}+a_n>0\}&D_n,\\
\end{cases}
\end{align*}
and $P$ is the weight lattice,
\begin{align*}
P=
\begin{cases}
\{a\in \left(\tfrac{1}{n}\mathbb Z\right)^{n}\,|\,\forall i,j: a_i-a_j\in\mathbb{Z},\sum_{i=1}^na_i=0\}&A_{n-1},\\
\{a\in \left(\tfrac12\mathbb Z\right)^n\,|\,\forall i,j: a_i-a_j\in\mathbb{Z}\}&B_n,\\
\{a\in \mathbb Z^{n}\}&C_n,\\
  \{a\in \left(\tfrac12\mathbb Z\right)^n\,|\,\forall i,j: a_i-a_j\in\mathbb{Z}\}&D_n.\\
\end{cases}
\end{align*}

Let $\pi$ be the full subgroupoid of the transformation groupoid $P\rtimes P$ on $P^{\ell+g}_{++}$. It has $P_{++}^{\ell+g}$ as its set of objects, whereas its set of morphisms consists of pairs $(a,\mu)$ of weights, with the source $a$, the target $a+\mu$, such that $a, a+\mu\in P^{\ell+g}_{++}$, and the composition law $(a_1,\mu_1)\circ(a_2,\mu_2)=(a_2,\mu_1+\mu_2)$ if $a_1=a_2+\mu_2$. 

We consider the $\pi$-graded space $V^{\textrm{RSOS}}=\bigoplus_{(a,\mu)\in\pi}V^{\textrm{RSOS}}_{(a,\mu)}$ with
\begin{align*}
V^{\textrm{RSOS}}_{(a,\mu)}=
\begin{cases}
\bar V_{\varepsilon_i}=\mathbb C e_i &\text{if $\mu=\varepsilon_i$, $i\neq0$, and $a,a+\varepsilon_i\in P^{\ell+g}_{++}$,}\\
\bar V_0=\mathbb C e_0 &\text{in the $B_n^{(1)}$ case if $\mu=\varepsilon_0$, $a\in P^{\ell+g}_{++}$ and $a_n\neq\frac12$,}\\
0 &\text{otherwise.}
\end{cases}
\end{align*}
In the $A_{n-1}^{(1)}$ case we also act on the dual space, whose restriction $(V^\ast)^{\textrm{RSOS}}=\bigoplus_{(a,\mu)\in\pi}(V^\ast)^{\textrm{RSOS}}_{(a,\mu)}$ is
\begin{align*}
    (V^*)^{\textrm{RSOS}}_{(a,\mu)}=
    \begin{cases}
        \bar V_{\varepsilon_{-i}}=\mathbb C e_{-i} &\text{if $\mu=\varepsilon_{-i}$ and $a,a+\varepsilon_{-i}\in P^l_{++}$,}\\
        0 &\text{otherwise,}
    \end{cases}
\end{align*}
where $(e_{-1},...,e_{-n})$ denotes the basis of $\bar V^*$ dual to the basis $(e_1,\dots, e_n)$ of $\bar V$.

\subsection{Restricted \texorpdfstring{$R$}{R}-matrix and rotations}\label{4.3}
We define the restricted $R$-matrix to be the one given in Section \ref{4.1}, such that the matrix elements $\R_{i,j}^{k,l}(u)$ now become homomorphisms of $\pi$-graded vector spaces through the action of $E_{ki}\otimes E_{lj}$ that sends $V^{\textrm{RSOS}}_{(a,\varepsilon_i)}\otimes V^{\textrm{RSOS}}_{(a+\varepsilon_i,\varepsilon_j)}$ to $V^{\textrm{RSOS}}_{(a,\varepsilon_k)}\otimes V^{\textrm{RSOS}}_{(a+\varepsilon_k,\varepsilon_l)}$. 

Let us give the inner product $\omega: V^{\textrm{RSOS}}_{(a,\varepsilon_i)}\otimes V^{\textrm{RSOS}}_{(a+\varepsilon_i,-\varepsilon_i)}\to \mathbb{C}$ and its dual map $\sigma: \mathbb{C}\to V^{\textrm{RSOS}}_{(a,\varepsilon_i)}\otimes V^{\textrm{RSOS}}_{(a+\varepsilon_i,-\varepsilon_i)}$ in $B_n^{(1)}$, $C_n^{(1)}$ and $D_n^{(1)}$ cases as


\begin{align}\label{e-omega}
  \omega(e_i\otimes_v e_{-j})&:=\delta_{\varepsilon_i,\varepsilon_j}\displaystyle\sqrt{\sigma\frac{G_{t((a,\varepsilon_i))}}{G_{s((a,\varepsilon_i))}}},\quad\sigma(1):=\sum_k e_{-k}\otimes_v e_{k}\displaystyle\sqrt{\sigma\frac{G_{t((a,\varepsilon_{-k}))}}{G_{s((a,\varepsilon_{-k}))}}},
\end{align}
where $\sigma$ in the argument of the square root is $1$ for $B^{(1)}_n,D^{(1)}_n$ and $-1$ for $C^{(1)}_n$, see Section \ref{4.1}.
In the case of algebra $A_{n-1}^{(1)}$, we set the maps $\omega$, $\sigma$, $\omega^*$ and $\sigma^*$ to be
\begin{align}\label{e-omega*}
 \omega(e_i\otimes_v e_{-j})&=\delta_{\varepsilon_i,\varepsilon_j}\displaystyle\sqrt{\frac{G_{t((a,\varepsilon_i))}}{G_{s((a,\varepsilon_i))}}},\qquad\sigma(1)=\sum_k e_{-k}\otimes_v e_{k} \displaystyle\sqrt{\frac{G_{t((a,-\varepsilon_k))}}{G_{s((a,-\varepsilon_k))}}}\\
  \omega^*(e_{-i}\otimes_v e_j)&=\delta_{\varepsilon_i,\varepsilon_j}\displaystyle\sqrt{\frac{G_{t((a,-\varepsilon_i))}}{G_{s((a,-\varepsilon_i))}}},\qquad\sigma^*(1)=\sum_k e_k\otimes_v e_{-k} \displaystyle\sqrt{\frac{G_{t((a,\varepsilon_k))}}{G_{s((a,\varepsilon_k))}}}.\notag
\end{align}
Here, $s$ and $t$ are the source and target maps, respectively, with $s((a,\varepsilon_i))=a$ and $t((a,\varepsilon_i))=a+\varepsilon_i$.

 The square roots are defined by analytic continuation from the region where their arguments are positive, as in Remark \ref{r-1}.

We now want to obtain the rotations of the $A_{n-1}^{(1)}$-type $R$-matrix $\R_{VV}(u)$ using Definition \ref{Rr2}. The rotation by $-90^\circ$ is
\begin{equation}
\label{rstar}
\R_{V^*V}(u,a):=\sum_{-\varepsilon_i+\varepsilon_j=\varepsilon_k-\varepsilon_l}(\R_{V^*V})^{k,-l}_{-i,j}(u,a)E_{k,-i}\otimes E_{-l,j},
\end{equation}
with
\[
(\R_{V^*V})^{k,-l}_{-i,j}(u,a):=\sqrt{\frac{G_{a+\varepsilon_k}G_{a-\varepsilon_i}}{G_aG_{a-\varepsilon_i+\varepsilon_j}}}(\R_{VV})_{j,l}^{i,k}(\lambda -u,a-\varepsilon_i),
\]
graphically presented as 
\begin{center}
\begin{tikzcd}
a \arrow[r, "\varepsilon_k"] \arrow[d, "-\varepsilon_i"']\arrow[rd,phantom,"\R_{V^*V}"]
& b \arrow[d, "-\varepsilon_l"] \\
d \arrow[r, "\varepsilon_j"']
& c 
\end{tikzcd}$:=\displaystyle\sqrt{\frac{G_bG_d}{G_aG_c}}$
\begin{tikzcd}
d \arrow[r, "\varepsilon_i"] \arrow[d, "\varepsilon_j"']\arrow[rd,phantom,"\R_{VV}"]
& a \arrow[d, "\varepsilon_k"] \\
c \arrow[r, "\varepsilon_l"']
& b
\end{tikzcd}.
\end{center}
The rotation by $90^\circ$ is
\begin{equation}
\R_{VV^*}(u,a):=\sum_{\varepsilon_i-\varepsilon_j=-\varepsilon_k+\varepsilon_l}(\R_{VV^*})^{-k,l}_{i,-j}(u,a)E_{-k,i}\otimes E_{l,-j},
\end{equation}
with
\begin{align*}
    (\R_{VV^*})^{-k,l}_{i,-j}(u,a):=\sqrt{\frac{G_{a-\varepsilon_k}G_{a+\varepsilon_i}}{G_aG_{a+\varepsilon_i-\varepsilon_j}}}(\R_{VV})_{j,l}^{i,k}(\lambda -u,a-\varepsilon_k),
\end{align*}
depicted as 
\begin{center}
\begin{tikzcd}
a \arrow[r, "-\varepsilon_k"] \arrow[d, "\varepsilon_i"']\arrow[rd,phantom,"\R_{VV^*}"]
& b \arrow[d, "\varepsilon_l"] \\
d \arrow[r, "-\varepsilon_j"']
& c 
\end{tikzcd}$:=\displaystyle\sqrt{\frac{G_bG_d}{G_aG_c}}$
\begin{tikzcd}
b\arrow[r, "\varepsilon_l"] \arrow[d, "\varepsilon_k"']\arrow[rd,phantom,"\R_{VV}"]
& c \arrow[d, "\varepsilon_j"] \\
a \arrow[r, "\varepsilon_i"']
& d
\end{tikzcd}.
\end{center}
Finally, the rotation by $180^\circ$ is
\begin{equation}
\R_{V^*V^*}(u,a):=\sum_{\varepsilon_i+\varepsilon_j=\varepsilon_k+\varepsilon_l}(\R_{V^*V^*})^{-k,-l}_{-i,-j}(u,a)E_{-k,-i}\otimes E_{-l,-j},
\end{equation}
with
\[
    (\R_{V^*V^*})^{-k,-l}_{-i,-j}(u,a):=(\R_{VV})^{j,i}_{l,k}(u,a-\varepsilon_i-\varepsilon_j),
\]
which corresponds graphically to 
\begin{center}
\begin{tikzcd}
a \arrow[r, "-\varepsilon_k"] \arrow[d, "-\varepsilon_i"']\arrow[rd,phantom,"\R_{V^*V^*}"]
& b \arrow[d, "-\varepsilon_l"] \\
d \arrow[r, "-\varepsilon_j"']
& c 
\end{tikzcd}$:=$
\begin{tikzcd}
c \arrow[r, "\varepsilon_j"] \arrow[d, "\varepsilon_l"']\arrow[rd,phantom,"\R_{VV}"]
& d \arrow[d, "\varepsilon_i"] \\
b \arrow[r, "\varepsilon_k"']
& a
\end{tikzcd}.
\end{center}

\subsection{Restricted model: results}
The vector space $V^{\textrm{RSOS}}$ is constructed in such a way that allows only the \textit{admissible pairs of weights}, as defined in \cite{JimboMiwaOkado1988}. Therefore, the condition that the $R$-matrices $\R_{VV}(u)$ and $\R(u)$ are well-defined on the $V^{\textrm{RSOS}}\otimes V^{\textrm{RSOS}}$ is equivalent to the \textit{finiteness property} of the restricted Jimbo--Miwa--Okado Boltzmann weights. It was shown that the restricted weights are finite and satisfy the star-triangle relation (\ref{eqn:str}) among themselves, which translates into the following result

\begin{lemma}
\label{lYB}
The $\R_{VV}(u)$ in the $A_{n-1}^{(1)}$ case and $\check R(u)$ in $B_n^{(1)}$, $C_n^{(1)}$, $D_n^{(1)}$ cases are well-defined on $V^{\mathrm{RSOS}}\otimes V^{\mathrm{RSOS}}$ and their restrictions to
  $V^{\mathrm{RSOS}}\otimes V^{\mathrm{RSOS}}$ satisfy the Yang--Baxter
  equation. Furthermore, they satisfy the inversion relations $\R_{VV}(u)\R_{VV}(-u)=\mathbb 1$ and $\R(u)\R(-u)=\mathbb 1$ on $V^{\mathrm{RSOS}}\otimes V^{\mathrm{RSOS}}$, where $\mathbb 1$ is an identity matrix depending on the algebra type.
\end{lemma}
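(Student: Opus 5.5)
The plan is to deduce Lemma \ref{lYB} from the corresponding statements for the unrestricted weights of Section \ref{4.1} (Appendix A), using the results of Jimbo, Miwa and Okado \cite{JimboMiwaOkado1988} on admissible pairs. The argument has three steps: well-definedness, the Yang--Baxter equation, and the inversion relation.

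\emph{Step 1: well-definedness.} A component $\R^{k,l}_{i,j}(u,a)$ acting on $V^{\mathrm{RSOS}}\otimes V^{\mathrm{RSOS}}$ is needed only when the corners $a$, $a+\varepsilon_i$, $a+\varepsilon_k$, $a+\varepsilon_i+\varepsilon_j$ all lie in $P^{\ell+g}_{++}$. In the $B_n^{(1)}$ case there is the additional constraint $a_n\neq\frac12$ whenever an edge carries $\varepsilon_0$. I will check case by case, over the diagrams (1)--(6), that no denominator vanishes and no square root degenerates on such configurations. The denominators are $[a_i-a_j]$, $[a_i+a_j+1]$, $[2a_i+1]$, $[a_j+\tfrac12]$ and the factors of $G_a$. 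For weights in the open alcove, $(a,\alpha)$ lies strictly between $0$ and $L$ for every positive root $\alpha$, so these brackets are nonzero because $[x]$ vanishes only on $L\mathbb Z+L\tau\mathbb Z$. The only delicate spots are denominators like $[a_i-a_j]$ with $i,j$ not giving a root with $a$ in the alcove, and the terms in (5) and (6). I will handle these by citing the finiteness property of \cite{JimboMiwaOkado1988}: poles in the individual summands of (5) and (6) cancel or are absent precisely on admissible configurations. Here I will use the choice $L=\frac{(\theta,\theta)}2(\ell+g)$ together with $L+2\lambda\ge0$.

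\emph{Step 2: Yang--Baxter equation.} Take a component of \eqref{e-YB} on $V_{\alpha}\otimes V_\beta\otimes V_\gamma\to V_{\alpha'}\otimes V_{\beta'}\otimes V_{\gamma'}$ with all six boundary heights in $P^{\ell+g}_{++}$. In the unrestricted model this component is the star-triangle relation \eqref{eqn:str}, whose sums over the internal height $g$ run over all of $P$. The key point is that any term whose internal height $g$ lies outside the alcove carries a factor that vanishes, while the weight as a whole stays finite. For example, for a diagram of type (3) crossing the wall, the square root contains $[a_i-a_j\pm1]$, which vanishes on the wall $(g,\alpha)=0$ or $L$. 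Consequently the unrestricted sum equals the sum over admissible $g$, which is exactly the corresponding component of \eqref{e-YB} restricted to $V^{\mathrm{RSOS}}$.

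To make this rigorous I would vary $a$ in a complex neighbourhood, where the star-triangle relation holds by Remark \ref{r-1}, and take the limit. This uses the boundedness from Step 1, together with the fact that the terms with $g$ outside the alcove tend to $0$. I expect this vanishing-on-the-boundary analysis, especially the $B_n^{(1)}$ case with $\varepsilon_0$ and $a_n=\frac12$, to be the main obstacle. There I would rely on the explicit check in \cite{JimboMiwaOkado1988}.

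\emph{Step 3: inversion relation.} The same truncation argument applies to the unrestricted inversion relation $\R(u,a)\R(-u,a)=\mathbb 1$ established in Appendix A. Its single internal sum likewise loses exactly the terms with non-admissible intermediate height, and those terms vanish in the limit. The resulting identity on $V^{\mathrm{RSOS}}\otimes V^{\mathrm{RSOS}}$ is the identity matrix, and the same argument gives $\R_{VV}(u)\R_{VV}(-u)=\mathbb 1$ in the $A_{n-1}^{(1)}$ case.
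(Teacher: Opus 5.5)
Your overall route is the paper's: obtain the restricted identities from the unrestricted ones of Appendix~\ref{apA} by discarding intermediate states outside $V^{\mathrm{RSOS}}$, using the finiteness and vanishing results of \cite{JimboMiwaOkado1988}. For the Yang--Baxter equation the paper does not re-derive anything by truncation; it simply quotes the restricted star--triangle relation of \cite{JimboMiwaOkado1988}.

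The step that fails as stated is the claim that every discarded term vanishes. In the $B_n^{(1)}$ case, at a height $b$ with $b_n=\frac12$, it does not. Consider the component $V_{(b,\varepsilon_m)}\otimes V_{(b+\varepsilon_m,-\varepsilon_m)}\to V_{(b,\varepsilon_k)}\otimes V_{(b+\varepsilon_k,-\varepsilon_k)}$ of $\R(u)\R(-u)$, where necessarily $k,m\notin\{0,-n\}$. Two discarded channels contribute:
\begin{enumerate}
\item[(a)] $i=0$, through the edge $(b,\varepsilon_0)$, which is excluded from $V^{\mathrm{RSOS}}$. Its intermediate height is $b$ itself, so truncating by ``non-admissible intermediate height'' does not even remove it.
\item[(b)] $i=-n$, through the height $b-\varepsilon_n$, whose last coordinate is $-\frac12$. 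This is the mirror image of $b$ in the wall $a_n=0$, not a point on a wall, so no factor of $G$ vanishes there.
\end{enumerate}
Both products are of type (4). Since $b_0=b_{-n}=-\frac12$ and $G_{b,0}=1$, and taking $\sqrt{G_{b,i}G_{b,j}}=\sqrt{G_{b,i}}\sqrt{G_{b,j}}$, one gets
\[
\R^{k,-k}_{-n,n}(u,b)\,\R^{-n,n}_{m,-m}(-u,b)=G_{b,-n}\,\R^{k,-k}_{0,0}(u,b)\,\R^{0,0}_{m,-m}(-u,b),\qquad G_{b,-n}=\frac{G_{b-\varepsilon_n}}{G_b}=\frac{h(-\tfrac12)}{h(\tfrac12)}=-1 .
\]
The right-hand side is generically nonzero. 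So these two terms cancel each other; neither tends to zero.

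Consequently, relying on the explicit check in \cite{JimboMiwaOkado1988} cannot give you vanishing here. That vanishing result has exactly this exception ($B_n^{(1)}$, $b_n=\frac12$, top edge $\varepsilon_0$ or $-\varepsilon_n$), as recalled in the proof of Lemma~\ref{lWWR}. There the paper carries out the analogous pairwise cancellation for the crossing relations.

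Your Step~3 needs this cancellation added. The paper's own proof of the present lemma also phrases the inversion step as pure vanishing, so this is a point you must supply rather than copy. The same pair of channels occurs in the star--triangle sums, so Step~2 either needs the same cancellation or should just invoke the restricted star--triangle relation of \cite{JimboMiwaOkado1988}, as the paper does. Away from this exceptional configuration, the vanishing statement you intend to quote is the one the paper uses, and your argument goes through.
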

\begin{proof}
The Yang--Baxter equations follow directly from the definition of $\R (u)$, $\R_{VV}(u)$ respectively, and the star-triangle relation (\ref{eqn:str}). The inversion relations in the non-restricted case follow from relation (\ref{eqn:invrel}),
 \begin{align*}
    \R(u)\R(-u)&=  \sum_{\substack{\varepsilon_i+\varepsilon_j=\varepsilon_k+\varepsilon_l,\\ \varepsilon_m+\varepsilon_o=\varepsilon_i+\varepsilon_j}}  \R^{k,l}_{i,j}(u)\R^{i,j}_{m,o}(-u)E_{km}\otimes E_{lo}\\
    &= \sum_{\substack{\varepsilon_i+\varepsilon_j=\varepsilon_k+\varepsilon_l,\\ \varepsilon_m+\varepsilon_o=\varepsilon_i+\varepsilon_j}}\mathcal{W}
\Big(\begin{matrix}
  b & a\\
  g & c
\end{matrix}\Big\vert u\Big) \mathcal{W}
\Big(\begin{matrix}
  b & g\\
  d & c
\end{matrix}\Big\vert -u\Big) E_{km}\otimes E_{lo}\\
&= \sum_{\substack{\varepsilon_i+\varepsilon_j=\varepsilon_k+\varepsilon_l,\\ \varepsilon_m+\varepsilon_o=\varepsilon_i+\varepsilon_j}}\delta_{k,m}\delta_{l,o}E_{km}\otimes E_{lo}= \sum_{\varepsilon_i-\varepsilon_j=\varepsilon_l-\varepsilon_k}E_{kk}\otimes E_{ll}= \mathbb{1},
\end{align*}
 where the dynamical parameters $a,b,c,d,g$ are as in the picture:
 \begin{center}
$\sum_{k,l,m,o} \sum_{g}$
\begin{tikzcd}
 b \arrow[r,"\varepsilon_k"] \arrow[d, "\varepsilon_i"'] \arrow[rd,phantom,"\R(u)"]
& a \arrow[d,"\varepsilon_l"]\\
g\arrow[r,"\varepsilon_j"'] 
& c 
\end{tikzcd}
\begin{tikzcd}
 b\arrow[r,"\varepsilon_i"]  \arrow[d, "\varepsilon_m"']\arrow[rd,phantom,"\R(-u)"] & g\arrow[d,"\varepsilon_j"]\\
d\arrow[r,"\varepsilon_o"'] 
& c 
\end{tikzcd}$=\mathbb 1$.
\end{center}
To see that this relation holds on the restricted space, it is enough to use the argument proved in \cite{JimboMiwaOkado1988}, that the weight 
\begin{tikzcd}
a \arrow[r,"\varepsilon_r"] \arrow[d, "\varepsilon_t"']\arrow[rd,phantom,"\R(u)"]
& b \arrow[d, "\varepsilon_u"] \\
d \arrow[r, "\varepsilon_s"']
& c
\end{tikzcd} with $(a,\varepsilon_t),(d,\varepsilon_s)\in\pi$ and $(a,\varepsilon_r)\notin\pi$ vanishes. The action of the inversion relation on $V^{\mathrm{RSOS}}\otimes V^{\mathrm{RSOS}}$ means that $(b,\varepsilon_m),(d,\varepsilon_o)\in\pi$ and therefore $\R_{m,o}^{i,j}(-u)$ vanishes unless $(b,\varepsilon_i)$ and $(g,\varepsilon_j)$ are inside $\pi$. Since $(b,\varepsilon_i),(g,\varepsilon_j)$ have to be in $\pi$, the same reasoning holds for the component $\R_{i,j}^{k,l}(u)$, showing that the terms in the inversion relation with arrows outside the groupoid $\pi$ vanish. Thus the relation holds on the restricted space.
\end{proof}

\begin{lemma}
\label{l513}
The operators $\R_{V^*V}(u)$, $\R_{VV^*}(u)$ and $\R_{V^*V^*}(u)$ in the case $A_{n-1}^{(1)}$ are well-defined on restricted spaces and satisfy the relations 
\begin{equation}
    \label{RA1}
        \begin{split}
      &\R^{(23)}_{V^*V}(u_1-u_2)\R^{(12)}_{V^*V}(u_1)\R^{(23)}_{VV}(u_2)= \R^{(12)}_{VV}(u_2)\R^{(23)}_{V^*V}(u_1)\R^{(12)}_{V^*V}(u_1-u_2),
    \end{split}
    \end{equation}
    \begin{equation}
        \label{RA2}
    \begin{split}
 \R^{(23)}_{V^*V^*}(u_1-u_2)\R^{(12)}_{V^*V}(u_1)\R^{(23)}_{V^*V}(u_2)= \R^{(12)}_{V^*V}(u_2)\R^{(23)}_{V^*V}(u_1)\R^{(12)}_{V^*V^*}(u_1-u_2).
    \end{split}
    \end{equation}
    on restricted spaces.
\end{lemma}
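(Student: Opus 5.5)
The plan is to deduce both statements from Lemma \ref{lYB} and Lemma \ref{yb*}, in two parts: well-definedness on the restricted spaces, then the Yang--Baxter relations.

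\emph{Well-definedness.} By the explicit formulas of Section \ref{4.3}, each component of $\R_{V^*V}(u,a)$, $\R_{VV^*}(u,a)$ and $\R_{V^*V^*}(u,a)$ equals a component of $\R_{VV}$ times a factor $\sqrt{G_bG_d/(G_aG_c)}$. Here $a,b,c,d$ are the four corners of the commutative square, and $\R_{VV}$ is evaluated at $\lambda-u$ (or at $u$ in the $180^\circ$ case) with a shifted dynamical variable. The square of $\R_{VV}$ that appears has exactly the same four corners, only with the arrows reversed. So if all four arrows of the rotated square lie in $\pi$, all corners lie in $P^{\ell+g}_{++}$. Then the arrows of the $\R_{VV}$-square also join weights of $P^{\ell+g}_{++}$, and by Lemma \ref{lYB} the corresponding restricted weight is finite.

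It remains to show that the prefactor is finite and nonzero. For $A_{n-1}^{(1)}$ we have $G_a=\prod_{i<j}[a_i-a_j]$. On $P^{\ell+g}_{++}$ each difference satisfies $0<a_i-a_j<L$, so no factor vanishes, and $G_a$ is finite and nonzero. Hence every component of the rotations on the restricted space is finite.

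Conversely, a component whose arrows lie in $\pi$ is determined solely by restricted data. Therefore the restriction of the unrestricted rotation to $V^{\mathrm{RSOS}}$, $(V^*)^{\mathrm{RSOS}}$ agrees with the rotation built from $\omega,\sigma,\omega^*,\sigma^*$ in \eqref{e-omega*} via Definition \ref{Rr2}. One must also check that these maps satisfy \eqref{sigmaii}, \eqref{omegastar} and \eqref{sigmaiistar} with $G(a)=G_a$; this is immediate from the square-root formulas.

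\emph{Yang--Baxter relations.} Once the rotations are realized via Definition \ref{Rr2} on the restricted $\pi$-graded spaces, I will apply Lemma \ref{yb*} directly. Its proof writes both sides of \eqref{RA1} and \eqref{RA2} as contractions of three $\R_{VV}$ factors with $\omega^*$ and $\sigma^*$. It then uses the Yang--Baxter equation for $\R_{VV}$, which holds on $V^{\mathrm{RSOS}}$ by Lemma \ref{lYB}.

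\emph{Main obstacle.} The delicate point is that the contraction in the proof of Lemma \ref{yb*} sums over intermediate arrows. For those arrows I need the Yang--Baxter identity in its restricted form, with all internal heights in $P^{\ell+g}_{++}$ rather than in $P$.

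I will argue as in the proof of Lemma \ref{lYB}: a JMO weight vanishes whenever two adjacent edges lie in $\pi$ but a third edge leaves $\pi$. Applied to the external edges, which lie in $\pi$, this kills every term with an internal height outside $P^{\ell+g}_{++}$ on both sides. The restricted equation is then the unrestricted star-triangle relation \eqref{eqn:str} with vanishing terms dropped.

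Alternatively, one can avoid this by noting that the rotated equations are, after the diagonal gauge by $\sqrt{G}$, literally instances of \eqref{eqn:str} with spectral parameters $\lambda-u_1$, $\lambda-u_1+u_2$, $u_2$ and relabelled corners. Their restricted validity is then exactly the statement of Lemma \ref{lYB}. The gauge factors cancel between the two sides because they depend only on the external corners.
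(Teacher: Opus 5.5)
Your proposal is correct and follows the paper's own, very brief, argument. The rotations are well defined because they are built from the restricted $\R_{VV}$ and the maps $\omega,\sigma,\omega^*,\sigma^*$ of \eqref{e-omega*}, all finite on the alcove where $G_a\neq0$; \eqref{RA1} and \eqref{RA2} then follow from Lemma \ref{yb*} applied in $\operatorname{Vect}_\pi$ for the restricted groupoid, with the Yang--Baxter equation for $\R_{VV}$ supplied by Lemma \ref{lYB}. Your ``main obstacle'' is already settled by Lemma \ref{lYB}, which is an identity on $V^{\mathrm{RSOS}}\otimes V^{\mathrm{RSOS}}\otimes V^{\mathrm{RSOS}}$ with only restricted internal arrows, so just cite it. The vanishing rule you state there is also too strong as phrased: JMO's vanishing needs one of the two boundary paths from the top-left to the bottom-right corner to lie in $\pi$. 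For example, weight (2) of Section \ref{4.1} with $a,a+\varepsilon_i$ admissible but $a+\varepsilon_i+\varepsilon_j$ not has two adjacent edges in $\pi$, yet equals $[1][a_i-a_j-u]/([1+u][a_i-a_j])\neq0$.
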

\begin{proof}
The operators $\R_{V^*V}(u)$, $\R_{VV^*}(u)$ and $\R_{V^*V^*}(u)$ are well-defined on restricted spaces since they are given by well-defined maps $\omega$, $\omega^*$, $\sigma$, $\sigma^*$ and $\R_{VV}(u)$. The relations (\ref{RA1}) and (\ref{RA2}) follow from the Lemma \ref{yb*}.
\end{proof}

\begin{lemma}
\label{lWWR}
The restriction of $\check R(u)$ to $V^{\mathrm{RSOS}}\otimes V^{\mathrm{RSOS}}$ is crossing symmetric, i.e. it obeys the relations 
\begin{align*}
\omega^{(23)}\R^{(12)}(u+\lambda)\R^{(23)}(u)=\rho(u)\omega\otimes \mathbb{1},\\
\R^{(12)}(u)\R^{(23)}(u+\lambda)\sigma^{(12)}=\rho(u)\mathbb{1}\otimes\sigma,
\end{align*}
for $B_n^{(1)}$, $C_n^{(1)}$, $D_n^{(1)}$, where $\mathbb 1$ is the $(2n+1)\times (2n+1)$ identity matrix in $B_n^{(1)}$ case and $2n\times 2n$ identity matrix in $C_n^{(1)}$ and $D_n^{(1)}$ cases. The function $\rho$ is
\begin{equation}\label{e-rho}
  \rho(u)=\frac{[u-1][\lambda +u]}{[u][1+\lambda +u]}.
\end{equation}
\end{lemma}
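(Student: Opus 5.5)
The plan is to reduce the crossing relations to the corresponding identities for the unrestricted Jimbo--Miwa--Okado weights. Then I would check that restricting to the RSOS groupoid $\pi$ neither adds nor removes terms, in the same way as in the proof of Lemma \ref{lYB}.

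First I would write both relations in components. For the first relation, fix the source $a$ and arrows $\varepsilon_i,\varepsilon_j,\varepsilon_k$ on $V\otimes V\otimes V$. The left-hand side becomes
\[
\sum_{p,q}\omega_{p,-p}\,\R^{m,p}_{i,q}(u+\lambda,a)\,\R^{q,-p}_{j,k}(u,a+\varepsilon_i).
\]
Here the internal arrows run over the intermediate heights. The right-hand side is $\rho(u)\,\delta\,\omega_{i,-i}$, where $\delta$ forces $\varepsilon_j=-\varepsilon_i$ and $\varepsilon_m=\varepsilon_k$. Substituting \eqref{e-omega}, the factor $\omega$ contributes a ratio $\sqrt{\sigma G_{b+\varepsilon_p}/G_b}$. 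The identity then becomes a statement about sums of products of two Boltzmann weights, weighted by the gauge factors $G_{a,i}$ of \eqref{e-Ga}.

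This is precisely the crossing symmetry of the JMO weights,
\[
\W\Big(\begin{matrix} a&b\\ d&c\end{matrix}\Big\vert \lambda-u\Big)
=\sqrt{\tfrac{G_bG_d}{G_aG_c}}\;\W\Big(\begin{matrix} d&a\\ c&b\end{matrix}\Big\vert u\Big),
\]
up to the sign $\sigma$ and the normalization factor coming from our normalization (1)$=1$. I would take this from Appendix A or verify it directly on the six types (1)--(6) using quasi-periodicity and oddness of $[u]$. Combining it with the inversion relation (\ref{eqn:invrel}) and arguing as in the proposition showing that crossing symmetry implies rotational invariance, I would reduce the two-term sum to a single inversion relation. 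That leaves a scalar factor, which should equal $\rho(u)$. The factor $\rho(u)=\frac{[u-1][\lambda+u]}{[u][1+\lambda+u]}$ arises as the ratio of normalizations $\frac{[1+u]}{[u]}$-type factors at $u$ and $u+\lambda$ after the renormalization that sets weight (1) to $1$. The second relation follows by the same computation with $\sigma$ in place of $\omega$, or by applying the first to the rotated matrix and using \eqref{os}.

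The main obstacle is the restriction step. In the sum over internal arrows $p,q$, terms whose intermediate heights leave $P^{\ell+g}_{++}$ are present in the unrestricted identity but absent on $V^{\mathrm{RSOS}}$. I would use the JMO vanishing property quoted in the proof of Lemma \ref{lYB}: a weight with two adjacent admissible edges and one inadmissible edge vanishes. This should kill every such term. I would also need the gauge factors $G_{b+\varepsilon_p}/G_b$ to be finite and nonzero on admissible pairs. This follows because the arguments $a_i\pm a_j$ and $a_i$ lie strictly inside $(0,L)$ in the alcove, so the factors $[\,\cdot\,]$ do not vanish there.

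A secondary difficulty is to confirm the scalar $\rho(u)$ exactly. I would check it on the simplest diagonal component, $i=k$ with $\varepsilon_j=-\varepsilon_i$, using weights (1) and (5). In the $B_n^{(1)}$ case I would also check the component with $\varepsilon_0$ using weight (6).
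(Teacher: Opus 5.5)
Your reduction to the unrestricted identities matches the paper: inserting the rotation relation \eqref{eqn:rotrel} into the inversion relation \eqref{eqn:invrel} gives \eqref{eqn:rel1}, and the $\sigma$-relation follows similarly. Since \eqref{eqn:rotrel} already carries the exact scalar $\rho(-u)^{-1}$, no separate normalization check is needed.

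The gap is in the restriction step. You claim that the vanishing property quoted in the proof of Lemma~\ref{lYB} kills every term of the unrestricted sum that is absent on $V^{\mathrm{RSOS}}$. In type $B_n^{(1)}$ this fails, for two reasons.

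First, $V^{\mathrm{RSOS}}_{(b,\varepsilon_0)}=0$ when $b_n=\tfrac12$, even though $b=b+\varepsilon_0$ lies in $P^{\ell+g}_{++}$. So the restriction deletes terms in which no height leaves the alcove. Your argument only tracks heights leaving $P^{\ell+g}_{++}$, so it never sees these terms.

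Second, at such a height $b$ the Jimbo--Miwa--Okado vanishing theorem has exceptions. Take admissible incoming edges $(b,\varepsilon_l)$, $(b+\varepsilon_l,-\varepsilon_l)$ with $l\neq 0,-n$. Then both $\check R^{0,0}_{l,-l}(u,b)$ and $\check R^{-n,n}_{l,-l}(u,b)$ are nonzero. This holds even though the second passes through $b-\varepsilon_n$, whose $n$-th coordinate is $-\tfrac12$; note that $G_{b-\varepsilon_n}\neq 0$, since $h(x)=[x]$ and $[-\tfrac12]\neq0$. So the property you invoke is false in exactly these configurations. In types $C_n^{(1)}$, $D_n^{(1)}$ the vanishing has no exceptions, and your argument goes through there.

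Consider the component of the first relation with input $e_k\otimes e_l\otimes e_{-l}$ at $a$, where $b=a+\varepsilon_k$ and $b_n=\tfrac12$. The unrestricted sum \eqref{eqn:rel1} contains two nonvanishing terms, $p=0$ and $p=-n$, that are absent from the restricted sum. The missing idea is that these discarded terms cancel each other:
\[
\check R^{k,0}_{k,0}(u+\lambda,a)\,\check R^{0,0}_{l,-l}(u,b)\sqrt{G_b}+\check R^{k,-n}_{k,-n}(u+\lambda,a)\,\check R^{-n,n}_{l,-l}(u,b)\sqrt{G_{b-\varepsilon_n}}=0 .
\]
This is a separate identity between theta functions, which the paper verifies by direct computation; it does not follow from any vanishing statement.

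You also need to check that no term with admissible inputs is sent onto the excluded arrow. The only weight with right edge $\varepsilon_0$ at a height with $n$-th coordinate $\tfrac12$ is the weight (3) with vertical edges $\varepsilon_0$ and horizontal edges $-\varepsilon_n$ at a height $a$ with $a_n=\tfrac32$. It vanishes because $[a_0-a_{-n}-1]=[0]=0$. The same two checks are needed for the $\sigma$-relation.
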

\begin{proof}
For the $R$-matrix (\ref{eqn:RM}) we have
\begin{align*}
&\left(\omega^{(23)}\R^{(12)}(u+\lambda)\R^{(23)}(u)-\rho(u)\omega^{(12)}\otimes \mathbb{1}\right)e_i\otimes e_j\otimes e_k\\
&=\Bigg(\sum_g \sqrt{\frac{G_{g}}{G_{a+\varepsilon_i+\varepsilon_j+\varepsilon_k}}}
W\Big(\begin{matrix}
  a & a+\varepsilon_i+\varepsilon_j+\varepsilon_k\\
  a+\varepsilon_i & g
\end{matrix}\Big\vert u+\lambda\Big)\\
&\qquad W\Big(\begin{matrix}
  a+\varepsilon_i & g\\
  a+\varepsilon_i+\varepsilon_j & a+\varepsilon_i+\varepsilon_j+\varepsilon_k
\end{matrix}\Big\vert u\Big)-\sqrt{\frac{G_{a+\varepsilon_i}}{G_a}}\delta_{0,\varepsilon_i+\varepsilon_j}\rho(u)\Bigg)e_{i+j+k}=0
\end{align*}
which follows from the equation (\ref{eqn:rel1}). Here $e_{i+j+k}$ is the vector in the weight space $\bar V_{\varepsilon_i+\varepsilon_j+\varepsilon_k}$. The second relation can be verified in a similar way using the identity (\ref{eqn:rel2}).

Now we have to check that the contributions of the arrows that are not elements of the subgroupoid $\pi$ and the contributions where $V_{(a,\varepsilon_0)}=\mathbb C e_0$ for $a_n=\frac12$ appears, vanish. Suppose that $(a,\varepsilon_i)$, $(a,\varepsilon_k)$, $(b,\varepsilon_l)$, $(d,\varepsilon_j)$ are composable arrows in the subgroupoid $\pi$ such that none of them satisfies the condition that the $n$-th component of the local state is $\frac12$ while the arrow is $\varepsilon_0$. Now assume that $(b,\varepsilon_p)\not\in\pi$ or that $(b,\varepsilon_p)=(b,\varepsilon_0)$ with $b\in P^{\ell+g}_{++}$.
\begin{center}
$\sum_g\displaystyle\sqrt{\frac{G_{g}}{G_{c}}}$\begin{tikzcd}
a \arrow[r, "\varepsilon_i"] \arrow[d, "\varepsilon_k"']\arrow[rd,phantom,"\R"]
& c\arrow[d, "\varepsilon_m"] \\
 b \arrow[r, "\varepsilon_p"] \arrow[d, "\varepsilon_l"']\arrow[rd,phantom,"\R"] & g \arrow [d, "-\varepsilon_m"] \\
d \arrow[r, "\varepsilon_j"'] & c
\end{tikzcd}$=\delta_{a,d}\displaystyle\sqrt{\frac{G_{b}}{G_a}}.$
\end{center}
As shown in Theorem 7 in \cite{JimboMiwaOkado1988}, in that case $\R^{p,-m}_{l,j}(b,u)=0$, unless $\mathfrak{g}^{(1)}=B_n^{(1)}$, $b_n=\frac12$ and $p=0$ or $p=-n$. The only configurations compatible with the first case $p =0$ are 
\begin{center}
\begin{tikzcd}
a \arrow[r, "\varepsilon_k"] \arrow[d, "\varepsilon_k"']\arrow[rd,phantom,"\R"]
& b\arrow[d, "\varepsilon_0"]\\
 b \arrow[r, "\varepsilon_0"] \arrow[d, "\varepsilon_l"']\arrow[rd,phantom,"\R"] & b \arrow [d, "\varepsilon_0"] \\
d \arrow[r, "-\varepsilon_l"'] & b
\end{tikzcd}$=\R^{k,0}_{k,0}(u+\lambda,a)\R^{0,0}_{l,-l}(u,b).$
\end{center}
Here $k\neq 0,n$ and $l\neq 0,-n$ because otherwise the arrows $(a,\varepsilon_k)$ and $(b,\varepsilon_l)$ are not in the subgroupoid $\pi$ or the arrows connected to $b$ should be $0$, which contradicts the assumption.

Similarly, in the second case $p=-n$ the only non-vanishing configurations are 
\begin{center}
\begin{tikzcd}
a \arrow[r, "\varepsilon_k"] \arrow[d, "\varepsilon_k"']\arrow[rd,phantom,"\R"]
& b\arrow[d, "-\varepsilon_n"] \\
 b \arrow[r, "-\varepsilon_n"] \arrow[d, "\varepsilon_l"']\arrow[rd,phantom,"\R"] & g \arrow [d, "\varepsilon_n"] \\
d \arrow[r, "-\varepsilon_l"'] & b
\end{tikzcd}$=\R^{k,-n}_{k,-n}(u+\lambda,a)\R^{-n,n}_{l,-l}(u,b)$
\end{center}
with $k\neq 0,n$ and $l\neq 0,-n$ for the same reason as above. By direct computation one can show that
\begin{align*}
\R^{k,0}_{k,0}(u+\lambda,a)\R^{0,0}_{l,-l}(u,b)\sqrt{G_b}
+\R^{k,-n}_{k,-n}(u+\lambda,a)\R^{-n,n}_{l,-l}(u,b)\sqrt{G_{b-\varepsilon_n}}=0.
\end{align*}
The only thing left to prove is that the weight 
\begin{tikzcd}
a \arrow[r, "\varepsilon_i"] \arrow[d, "\varepsilon_p"']\arrow[rd,phantom,"\R"]
& b\arrow[d, "\varepsilon_0"] \\
c \arrow[r, "\varepsilon_r"'] & d
\end{tikzcd}$=\R^{i,0}_{p,r}(u,a)$
with $b_n=\frac12$ and composable arrows $(a,\varepsilon_i),(a,\varepsilon_p),(c,\varepsilon_r)\in\pi$, such that none of them satisfies the condition that the $n$-th component of the local state is $\frac12$ while the arrow is $\varepsilon_0$, vanishes. The only configuration compatible with that case is
\begin{tikzcd}
a \arrow[r, "\varepsilon_i"] \arrow[d, "\varepsilon_0"']\arrow[rd,phantom,"\R"]
& b\arrow[d, "\varepsilon_0"] \\
a \arrow[r, "\varepsilon_i"'] & b
\end{tikzcd}$=\R^{\alpha,0}_{0,i}(u,a),$
 but since $(a,\varepsilon_i)$ and $(a,\varepsilon_0)$ are inside the groupoid $\pi$ and $a_n\neq\frac12$, it follows that $i=-n$ and $a_n=\frac32$, which gives $[a_{0}-a_{-n}-1]=0$.
 
A similar argument shows that the second identity holds in the restricted space.
\end{proof}
\begin{lemma}
\label{wttan}
In the $A_{n-1}^{(1)}$ case, the following relations are satisfied on the restricted space:
      \begin{align*}
\omega^{(23)}\R_{VV}^{(12)}(u+\lambda)\R^{(23)}_{V^*V}(u)&=\omega\otimes\rho_2(u),\\
\omega^{*(23)}\R^{(12)}_{V^*V}(u+\lambda)\R^{(23)}_{VV}(u)&=\omega^*\otimes\mathbb{1},\\
\R^{(12)}_{V^*V}(u)\R^{(23)}_{VV}(u+\lambda)\sigma^{(12)}&=\rho_2(u)\otimes\sigma,\\
\R^{(12)}_{VV}(u)\R^{(23)}_{V^*V}(u+\lambda)\sigma^{*(12)}&=\mathbb{1}\otimes\sigma^{*(23)},
      \end{align*}
      where
      \begin{equation}\label{e-rho2}
        \rho_2 (u)=\frac{[\lambda +u][\lambda -u]}{[1+\lambda +u][1+\lambda -u]},
      \end{equation}
      and $\mathbb 1$ is the $n\times n$ identity matrix.
\end{lemma}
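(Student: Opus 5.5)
The plan follows the proof of Lemma \ref{lWWR}. First reduce each of the four identities to a scalar identity among the unrestricted Boltzmann weights of type $A_{n-1}^{(1)}$. Then check that the restriction to $V^{\mathrm{RSOS}}$ does not destroy it.

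For the first identity, apply both sides to $e_i\otimes e_{-j}\otimes e_k$ in a component with source $a$. Use the definition of $\R_{V^*V}$ from Section \ref{4.3}: it is $\R_{VV}(\lambda-u,a-\varepsilon_i)$ times $\sqrt{G_bG_d/(G_aG_c)}$. The left side then becomes a sum over an intermediate height $g$ of the form
\[
\sum_g \sqrt{\tfrac{G_g\,G_{\cdot}}{G_{\cdot}G_{\cdot}}}\;\W(\cdots\vert u+\lambda)\,\W(\cdots\vert \lambda-u),
\]
contracted with the coefficient of $\omega$ from \eqref{e-omega*}. The right side is $\delta\,\sqrt{G_{a+\varepsilon_i}/G_a}\,\rho_2(u)$. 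After cancelling the $G$-factors, which telescope because of the commutative-square constraints $t(k)=t(i)$ etc. used in Lemma \ref{rotpi}, this should be exactly the $A$-type crossing/unitarity identity for the weights. I expect this identity to be recorded in Appendix A, alongside (\ref{eqn:rel1}), (\ref{eqn:rel2}) and (\ref{eqn:invrel}). If it is not, I would prove it directly: the sum over $g$ has at most two nonzero terms of type (2) and (3). One then uses the three-term theta function identity (addition formula) for $[\,\cdot\,]$, and this produces the factor $\frac{[\lambda+u][\lambda-u]}{[1+\lambda+u][1+\lambda-u]}$.

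The second and fourth identities, where $\rho$ is replaced by $\mathbb 1$, should follow from the first and third together with the inversion relation of Lemma \ref{lYB}, rather than from separate computations. Write $\R_{V^*V}(u+\lambda)$ through $\R_{VV}(-u)$ by its definition, and use $\omega^*$, $\sigma^*$ from \eqref{omegastar}, \eqref{sigmastar}. The left side of the second identity then becomes $\omega^{*(23)}$ composed with $\R_{VV}(-u)\R_{VV}(u)$ rotated, and the inversion relation $\R_{VV}(u)\R_{VV}(-u)=\mathbb 1$ collapses it to $\omega^*\otimes\mathbb 1$. The fourth identity is handled the same way using $\sigma^*$. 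The third identity is dual to the first: either repeat the first computation with $\sigma$ in place of $\omega$, or derive it from the first by composing with $\sigma^{(01)}$ and $\omega^{(34)}$ and using \eqref{sigmaii}, as in the proof of Lemma \ref{inversion}.

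The main obstacle is the restriction step, and here I would follow the end of the proof of Lemma \ref{lWWR}. The summation over $g$ in the unrestricted identity may include heights $g\notin P^{\ell+g}_{++}$, and these terms are absent on $V^{\mathrm{RSOS}}$. For type $A$ there is no $\varepsilon_0$ subtlety. I would invoke the vanishing result of \cite{JimboMiwaOkado1988} already used in Lemma \ref{lYB}: a weight with two composable edges inside $\pi$ and an adjacent edge leaving $\pi$ vanishes. This shows every term with $g$ outside the alcove carries a zero factor, and the same holds for the rotated weights, since they are the same weights at shifted heights. The remaining risk is that the square-root prefactors $\sqrt{G_g/G_c}$ become singular at a boundary height. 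I would check that $G_g=\prod_{i<j}[g_i-g_j]$ is nonzero on the closure of the alcove that appears, and where it vanishes, that the accompanying weight $\W$ vanishes to at least the same order.
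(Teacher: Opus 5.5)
Your reduction is the right one: the first identity is \eqref{eqn:invAn} and the third is \eqref{eqn:invAn2}. Deriving the second and fourth identities from Lemma \ref{lYB} by a zigzag with $\omega^*,\sigma^*$ is also correct, and if you do it with the restricted maps it needs no boundary analysis at all. The gap is the restriction step for the first and third identities, where the mechanism you propose does not work.

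\begin{itemize}
\item In the first identity, the summation height $g$ is the corner diagonally opposite the source in \emph{both} $\R_{VV}$ weights that occur. These are the weight of $\R_{VV}(u+\lambda)$ with corners $a,b,d,g$, and, after undoing the rotation, the weight of $\R_{VV}(\lambda-u)$ with corners $c,d,b,g$.
\item In the third identity, the summation height $a-\varepsilon_k$ is the source corner of both weights.
\item The vanishing result of \cite{JimboMiwaOkado1988} used in Lemma \ref{lYB} only covers a \emph{side} corner leaving $P^{\ell+g}_{++}$ while the source, the target and the other side corner stay inside. It says nothing about these terms, and the weights need not vanish. For example, on $e_i\otimes e_{-i}\otimes e_i$ the term with $g=a+2\varepsilon_i$ is a product of two type (1) weights, both equal to $1$. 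Yet $g\notin P^{\ell+g}_{++}$ can occur, e.g.\ for $n=2$, $\ell\geq 1$, $a_1-a_2=L-2$.
\item Your claim that rotated weights vanish ``since they are the same weights at shifted heights'' fails for the same reason. Rotation turns the side corner of $\R_{V^*V}$ into the diagonal corner of the underlying $\R_{VV}$ weight.
\end{itemize}

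What actually kills these terms is the $G$-factor you treat as a risk. The summation height enters the summand with a positive power:
\begin{itemize}
\item in the first identity, through $\sqrt{G_g/G_b}\,\sqrt{G_cG_g/(G_bG_d)}$;
\item in the third identity, through $G_{a-\varepsilon_k}$, coming from $\sigma$ and the rotation prefactor.
\end{itemize}
A height one step outside the alcove lies on a wall, i.e.\ $g_j=g_{j+1}$ for some $j$, or $g_1-g_n=L$. There $G_g=0$, since $[0]=[L]=0$. This is exactly the paper's argument: $b_{k-1}=b_k$, hence $G_b=0$.

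So your planned check that $G_g\neq 0$ on the closure of the alcove is false and beside the point. What must be checked instead is that all other factors stay finite at such heights:
\begin{itemize}
\item the $G$'s in denominators sit at fixed corners inside the alcove;
\item in the third identity the weights themselves depend on $h=a-\varepsilon_k$, but their denominators $[h_k-h_m]$ are nonzero because the fixed corners adjacent to $a$ (such as $a+\varepsilon_m$) lie in the alcove.
\end{itemize}

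Finally, your fallback direct proof of the crossing identity undercounts. For input $e_i\otimes e_{-i}\otimes e_i$, the height $g=a+\varepsilon_i+\varepsilon_k$ runs over all $n$ values of $k$. So this is an $n$-term elliptic identity, not a consequence of a single three-term theta relation. You do not need it, since \eqref{eqn:invAn} is already in Appendix \ref{apA}.
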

\begin{proof}
The first relation follows from the inversion relation (\ref{eqn:invAn}),
\begin{align*}
    &\sum_{\substack{k,o,p\\ \varepsilon_p-\varepsilon_k=-\varepsilon_j+\varepsilon_m\\ \varepsilon_o+\varepsilon_k=\varepsilon_p+\varepsilon_i}}\omega_{k,-l}(\R_{VV})^{o,k}_{i,p}(u+\lambda)(\R_{V^*V})^{p,-k}_{-j,m}(u)e_{o}-\omega_{i,-j}e_{m}\rho_2(u)\\
    &=\sum_{g}\sqrt{\frac{G_g}{G_b}}\W\Big(\begin{matrix}
  a & b\\
  d & g
\end{matrix}\Big\vert \lambda +u\Big)\sqrt{\frac{G_cG_g}{G_bG_d}}\W\Big(\begin{matrix}
  c & d\\
  b & g
\end{matrix}\Big\vert \lambda -u\Big)e_o -\delta_{m,o}\sqrt{\frac{G_d}{G_a}}e_m\rho_2(u)=0,
\end{align*}
where the indices are clear from the picture,
\begin{center}
$\sum_g\displaystyle\sqrt{\frac{G_{g}}{G_{b}}}$\begin{tikzcd}
a \arrow[r, "\varepsilon_o"] \arrow[d, "\varepsilon_i"']\arrow[rd,phantom,"\R_{VV}"]
& b\arrow[d, "\varepsilon_k"] \\
 d \arrow[r, "\varepsilon_p"] \arrow[d, "-\varepsilon_j"']\arrow[rd,phantom,"\R_{V^*V}"] & g \arrow [d, "-\varepsilon_k"] \\
c \arrow[r, "\varepsilon_m"'] & b
\end{tikzcd}$=\delta_{i,j}\displaystyle\sqrt{\frac{G_{d}}{G_a}} \rho_2(u).$
\end{center}
The other equations follow from inverse relations (\ref{eqn:invrel}) and (\ref{eqn:invAn2}) in a similar way. 

To prove that relations hold on restricted space, we first recall from the proof of the Lemma \ref{lYB} that the diagram
\begin{tikzcd}
a \arrow[r, "\varepsilon_k"] \arrow[d, "\varepsilon_i"']\arrow[rd,phantom,"\R_{VV}"]
& b\arrow[d, "\varepsilon_l"] \\
d \arrow[r, "\varepsilon_j"'] & c
\end{tikzcd}
with $(a,\varepsilon_i),(d,\varepsilon_j)\in \pi$ is non-vanishing only when $(a,\varepsilon_k),(b,\varepsilon_l)\in \pi$, and is finite in that case. Similarly, the diagram
    \begin{center}
\begin{tikzcd}
a \arrow[r, "\varepsilon_k"] \arrow[d, "-\varepsilon_i"']\arrow[rd,phantom,"\R_{V^*V}"]
& b\arrow[d, "-\varepsilon_l"] \\
d \arrow[r, "\varepsilon_j"'] & c
\end{tikzcd}$=\displaystyle\sqrt{\frac{G_bG_d}{G_a G_c}}$
\begin{tikzcd}
d \arrow[r, "\varepsilon_i"] \arrow[d, "\varepsilon_j"']\arrow[rd,phantom,"\R_{VV}"]
& a\arrow[d, "\varepsilon_k"] \\
c\arrow[r, "\varepsilon_l"'] & b
\end{tikzcd}
\end{center}
with $(d,\varepsilon_i),(d,\varepsilon_j)\in \pi$ can only have $(a,\varepsilon_k),(c,\varepsilon_l)\in \pi$ for the following reason: firstly, it is finite since $a,c,d\in P^{\ell+g}_{++}$ means $G_a\neq 0$, $G_c\neq 0$ and $[d_j-d_l]\neq 0$. Furthermore, if we assume that $b\notin P^{\ell+g}_{++}$, it holds that $b_{k-1}=b_k$, since $a_{k-1}>a_k$, $b_{k-1}=a_{k-1}$ and $b_k=a_k+1$, therefore $G_b=0$.

 From this, one can easily see with restricted parameters $a,b,c,d,g$ and weights $\varepsilon_i,\varepsilon_j,\varepsilon_m,\varepsilon_o$, in the first equation
  \begin{center}
$\sum_g\displaystyle\sqrt{\frac{G_{g}}{G_{b}}}$\begin{tikzcd}
a \arrow[r, "\varepsilon_o"] \arrow[d, "\varepsilon_i"']\arrow[rd,phantom,"\R_{VV}"]
& b\arrow[d, "\varepsilon_k"] \\
 d \arrow[r, "\varepsilon_p"] \arrow[d, "-\varepsilon_j"']\arrow[rd,phantom,"\R_{V^*V}"] & g \arrow [d, "-\varepsilon_k"] \\
c \arrow[r, "\varepsilon_m"'] & b
\end{tikzcd}$=\delta_{i,j}\displaystyle\sqrt{\frac{G_{d}}{G_a}} \rho_2(u).$
\end{center}
 all the contributions that leave the restricted space vanish. The same holds for other relations.
\end{proof}

\begin{cor}
\label{Rvvstar}
    In the $A_{n-1}^{(1)}$ case we have the inversion identities
    \begin{align*}
        \R_{VV^*}(-u)\R_{V^*V}(u)=\rho_2(u),\qquad \R_{V^*V}(u)\R_{VV^*}(-u)=\rho_2(u)
    \end{align*}
    that hold on $(V^*)^{\mathrm{RSOS}}\otimes V^{\mathrm{RSOS}}$ and $V^{\mathrm{RSOS}}\otimes (V^*)^{\mathrm{RSOS}}$, respectively.
\end{cor}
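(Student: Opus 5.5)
The plan is to copy the proof of Lemma \ref{inversion}, using Lemma \ref{wttan} in place of the crossing-symmetry relations (\ref{qu1}), (\ref{qu2}). The first and third relations of Lemma \ref{wttan} are exactly (\ref{qu1}) and (\ref{qu2}) with $\alpha=\rho_2$, except that the scalar sits on the other tensor factor. Since $\rho_2(u)$ is a scalar multiple of the identity, this placement is immaterial. So the $A_{n-1}^{(1)}$ $R$-matrix, restricted to $V^{\mathrm{RSOS}}$ and $(V^*)^{\mathrm{RSOS}}$, is crossing symmetric in the sense of Definition \ref{def-crossing-gl} with $\alpha(u)=\rho_2(u)$. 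We then recompute the two inversion identities directly rather than citing Lemma \ref{inversion} abstractly, so that the restricted spaces are handled explicitly.

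For the first identity, $\R_{VV^*}(-u)\R_{V^*V}(u)=\rho_2(u)$ on $(V^*)^{\mathrm{RSOS}}\otimes V^{\mathrm{RSOS}}$, take the first relation of Lemma \ref{wttan}:
\[
\omega^{(23)}\R_{VV}^{(12)}(u+\lambda)\R^{(23)}_{V^*V}(u)=\omega\otimes\rho_2(u),
\]
an identity of maps $V\otimes V^*\otimes V\to V$. Precompose both sides with $\sigma^{(01)}$ in an extra leading factor $V^*$. On the right, the zigzag identity (\ref{sigmaii}) collapses $\omega\sigma$ and leaves $\rho_2(u)$ acting on $V^*\otimes V$. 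On the left, $\sigma^{(01)}$ commutes past $\R^{(23)}_{V^*V}(u)$, which acts on other tensor factors. This leaves $\R^{(23)}_{V^*V}(u)$ followed by
\[
\omega^{(23)}\R^{(12)}_{VV}(u+\lambda)\sigma^{(01)} .
\]
By the second line of Definition \ref{Rr2}, this composite is $\R_{VV^*}(\lambda-(u+\lambda))=\R_{VV^*}(-u)$, with its tensor factors relabelled. That gives the first identity. The second identity follows symmetrically: apply $\omega^{(34)}$ on the left of the third relation of Lemma \ref{wttan}, use (\ref{sigmaii}) to collapse $\omega\sigma$, and recognise the remaining composite of $\omega$, $\R_{VV}(u+\lambda)$ and $\sigma$ as $\R_{VV^*}(-u)$. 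Alternatively, since both operators are finite matrices on each graded component, the second identity follows from the first because a one-sided inverse of a square matrix is two-sided wherever $\rho_2(u)\neq0$, and then for all generic $u$ by meromorphic continuation.

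The main obstacle is bookkeeping. In each equality I must check that the contraction reproduces $\R_{VV^*}$ exactly as defined in Definition \ref{Rr2} and (\ref{Rrel}), with its factors $\sigma^{-k,k}$ and $\omega_{j,-j}$. In particular it must not produce the starred maps $\omega^*,\sigma^*$, which would bring in the $G$-ratios of (\ref{omegastar}). I would check this componentwise on a square diagram.

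Finally, every intermediate sum must stay inside the restricted spaces. The relations of Lemma \ref{wttan} already hold on restricted spaces, with the out-of-groupoid contributions shown to vanish there. The maps $\omega$ and $\sigma$ are defined on $V^{\mathrm{RSOS}}$ and $(V^*)^{\mathrm{RSOS}}$, and by Lemma \ref{l513} all the rotated operators are well defined on restricted spaces. Hence each contraction is a composition of well-defined restricted maps and no new boundary terms arise.
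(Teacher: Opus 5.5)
Your proposal is correct and is essentially the paper's proof. The paper cites Lemma \ref{inversion}, whose proof is exactly your precomposition of (\ref{qu1}) with $\sigma^{(01)}$ and postcomposition of (\ref{qu2}) with $\omega^{(34)}$, together with Lemma \ref{wttan}, which supplies those crossing relations with $\alpha=\rho_2$ on the restricted spaces. The square-matrix shortcut you mention as an alternative would also need the graded components of $(V^*)^{\mathrm{RSOS}}\otimes V^{\mathrm{RSOS}}$ and $V^{\mathrm{RSOS}}\otimes (V^*)^{\mathrm{RSOS}}$ with the same source and target to have equal dimension, which holds here but is not automatic.
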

\begin{proof}
    This follows from Lemma \ref{inversion} and Lemma \ref{wttan}.
\end{proof}

\section{Restricted quantum groups}\label{sec-5}

In this section, we introduce a class of $\pi^2$-graded Hopf algebras based on
$R$-matrices on $V\otimes V$ for a $\pi$-graded vector space $V$. They are defined by generators and
relations, where the generators have a definite $\pi^2$-degree and the relations are homogeneous, so that
the quotient is a $\pi^2$-graded algebra.
The main examples are obtained
from the RSOS models of Jimbo, Miwa, and Okado associated with classical Lie
algebras of the previous section, but we
first consider a more general situation, which presumably also applies to general simple
Lie algebras. Out of such an $R$-matrix one obtains a $\pi^2$-graded bialgebra
defined by a graded version of RTT relations. To construct the antipode and
obtain a Hopf algebra, one needs an additional property: the crossing symmetry,
which relies on an inner product. Two cases are to be considered:
either the inner product is defined on the space $V$ and we say that  the corresponding quantum  group is of orthogonal type (including the symplectic case); or
the inner product is defined on $V\oplus V^*$ and we extend the $R$ matrix to this space by ``rotation'', and we say that the quantum group is of general linear type.

\subsection{Elliptic quantum algebras}\label{ss-elliptic algebras}
In the general context of the quantum inverse scattering method \cite{ReshetikhinTakhtadzhyanFaddeev1989}, \cite{ReshetikhinSemenovTianShansky1990}
quantum algebras are defined by RTT relations which are quadratic relations, each of the form
\begin{equation}\label{e-rel1}
  \sum_{\alpha,\beta=1}^n c_{\alpha,\beta}(u_1-u_2)t_\alpha(u_1)t_\beta(u_2)=0
\end{equation}
for generating functions $t_\alpha(u)$ of generators and generating functions $c_{\alpha,\beta}(u)$ of
structure constants (matrix entries of $R$-matrices). The meaning of ``generating function'' depends on the
type of algebra and of the analytic structure of the coefficients $c_{\alpha,\beta}(u)$. For example in
the case of quantum affine algebras \cite{ReshetikhinSemenovTianShansky1990} the coefficients are meromorphic periodic
functions of $u$ admitting a power series expansion in the variable $z=\exp(2\pi i u/L)$ for some period $L$.
Accordingly, the generators $t_{\alpha}[n]$ of the algebra $A$
are collected in a formal Fourier series $t_\alpha(u)=\sum_{n=0}^\infty
t_\alpha[n]z^n\in A[[z]]$. Expanding \eqref{e-rel1} in powers of the two exponential variables $z_1,z_2$ and comparing coefficients
yields an infinite set of quadratic relations for the generator, each involving finitely many generators. This
can be best seen by writing \eqref{e-rel1} in multiplicative notation $c_\alpha(u)=\tilde c_\alpha(z)$, $t_\alpha(u)=\tilde t_\alpha(z)$:
\begin{equation}\label{e-rel2}
  \sum_{\alpha,\beta=1}^n \tilde c_{\alpha,\beta}(z_1)\tilde t_\alpha(z_1z_2)\tilde t_\beta(z_2)=0,\quad\text{in $A[[z_1,z_2]]$.} 
\end{equation}
For our application to elliptic restricted quantum groups we wish to generalize this to the elliptic case. It is convenient to consider {\em families} of algebras with quadratic relations parametrized by a modular parameter $\tau$ in the upper half-plane and include the trigonometric limit $\tau\to i\infty$. The relations take the form
\begin{equation}\label{e-rel3}
  \sum_{\alpha,\beta=1}^n c_{\alpha,\beta}(u_1-u_2,\tau)t_\alpha(u_1,\tau)t_\beta(u_2,\tau)=0,
\end{equation}
where the coefficients $c_{\alpha,\beta}(u,\tau)$ are elliptic functions admitting a convergent Fourier expansion
\[
  c_{\alpha,\beta}(u,\tau)=\sum_{n,m=0}^\infty c_{\alpha,\beta}[n,m] e^{2\pi i nu/L}e^{2\pi im(\tau-u/L)}.
\]
In the restricted SOS models of Section \ref{sec-4}, this follows from the form of the $R$-matrix coefficients listed in Section \ref{4.1} and the Jacobi
product formula \eqref{e-triple} for the theta function $[x]$ of $x$ and $\tau$.
Accordingly, the relations \eqref{e-rel3} are
relations for the coefficients $t_\alpha[n,m]$ of generating functions
\[
  t_\alpha(u)=\sum_{n,m=0}^\infty t_\alpha[n,m]z^n(p/z)^m,\quad z=e^{2\pi i u/L},\quad p=e^{2\pi i\tau}.
\]
Expanding \eqref{e-rel3} and comparing coefficients yields infinitely many quadratic relations for the
generators $t_\alpha[n,m]$, each involving a finite subset of generators. The $t_\alpha[n,0]$ obey
relations \eqref{e-rel1} for the trigonometric limit $c_{\alpha,\beta}(u,\tau=i\infty)$ of the coefficients.

\begin{definition}
    Let $\mathcal O=\mathbb C[[z,p/z]]$. An \emph{elliptic quantum algebra} is a unital algebra over $\mathbb C[p]$ generated by $t_\alpha[n,m]$, for $\alpha$ in a finite index set and $n,m\in\mathbb Z_{\geq0}$ subject to relations of the form \eqref{e-rel2}, with $c_{\alpha,\beta}\in\mathcal O$.
\end{definition}
We can write the relations in a form generalizing \eqref{e-rel2} by introducing the variable $w=p/z$.
With $\tilde c_{\alpha,\beta}(z,w)=\sum_{n,m\geq0} c_{\alpha,\beta}[n,m]z^nw^m$ and $\tilde t_\alpha(z,w)=
\sum_{n,m\geq0}t_\alpha[n,m]z^nw^m$, the relations \eqref{e-rel3} are
\[
  \sum_{\alpha,\beta=1}^n \tilde c_{\alpha,\beta}(z_1,z_2w)\tilde t_\alpha(z_1z_2,w)\tilde t_\beta(z_2,z_1w)
  =0,\quad\text{in $A[[z_1,z_2,w]]$.} 
\]
The relations are bilinear over $\mathbb C[p]$ where $p=z_1z_2w$. They reduce to the relations \eqref{e-rel2} in the trigonometric
specialization $w=0$. It is useful to think of the power series in $z$ and $p/z$ as functions on the ``formal annulus'': convergent series with convergence radius 1 in both variables represent holomorphic functions on the family of annuli $|p|<|z|<1$.

\subsection{Restricted quantum groups of general linear type}\label{sec-5.1}
Let $\pi$ be a finite groupoid with at most one morphism between any two objects. Take $V\in\operatorname{Vect}^f_{\mathbb{C}}(\pi)$ to be a finite-dimensional $\pi$-graded vector space over $\mathbb{C}$, and denote its dual space by $V^*$. Consider all the arrows $k\in\pi_1$ such that $V_k\neq \{0\}$ and let $\tilde\pi\subseteq\pi$ be the set containing all such arrows. Clearly, all nonempty components of  $V^*$ are graded by the set $-\tilde\pi:=\{k^{-1}\mid k\in\tilde\pi\}$.

 Let $\check R_{VV}(u)\in \operatorname{End}(V \otimes V )$ be a crossing-symmetric $R$-matrix as described in Section \ref{sec-3.1} and let $\check R_{V^*V}(u)$, $\check R_{V^*V^*}(u)$ and $\check R_{VV^*}(u)$ be its rotations given by Definition \ref{Rr2}. These $R$-matrices will be used as coefficients for quadratic relations defining elliptic quantum algebras as in the previous section and we therefore assume that the matrix elements of $\check R_{VV}(u)$ are meromorphic functions of $u\in\mathbb C$ and $\tau$ in the upper half-plane admitting a power series expansion in the variables $e^{2\pi i u/L},e^{2\pi i(\tau-u/L)}$
 
Consider the double groupoid $\Gamma=\pi^2$ and let $(A_\alpha )_{\alpha\in\pi^2}$ be a $\pi ^2$-graded unital algebra described in Section \ref{sec-2.3} with $\pi$ being replaced by $\Gamma^\circ$. It is generated by $T_i^j[m,n]$ and $(T^*)_{-i}^{-j}[m,n]$, $i,j\in\tilde\pi$, compactly represented by a formal Fourier series of the form
\begin{equation}
\label{fps}
\begin{split}
T_i^j(u)=&\sum_{m,n\geq 0}T_i^j[m,n]e^{2\pi inu/L}(p\,e^{-2\pi iu/L})^m,\\ (T^*)_{-i}^{-j}(u)=&\sum_{m,n\geq 0}(T^*)_{-i}^{-j}[m,n]e^{2\pi inu/L}(p\,e^{-2\pi iu/L})^m,
\end{split}
\end{equation}
as in Section \ref{ss-elliptic algebras}.
The generators $T_i^j(u)$ and $(T^*)_{-i}^{-j}(u)$ live in the algebra component graded by $\alpha\in\Gamma^{\bullet}(i,j)$, $\beta\in\Gamma^\bullet (i^{-1},j^{-1})$,
\begin{align*}
    T^j_i(u)\in \operatorname{Hom}_{\mathbb{C}}(V_{i},V_{j})\otimes A_{\alpha},\qquad
    (T^*)^{-j}_{-i}(u)\in \operatorname{Hom}_{\mathbb{C}}((V^*)_{i^{-1}},(V^*)_{j^{-1}})\otimes A_{\beta}.
\end{align*}
Here we really mean that the Fourier coefficients of $T_i^j(u)$ and $(T^*)^{-j}_{-i}(u)$ live in $\operatorname{Hom}_{\mathbb{C}}(V_{i},V_{j})\otimes A_{\alpha}$ and $\operatorname{Hom}_{\mathbb{C}}((V^*)_{i^{-1}},(V^*)_{j^{-1}})\otimes A_{\beta}$, respectively.
These generators can be graphically represented by their $\Gamma$-graded component, i.e. the commutative square with the fixed vertical morphisms $i$ and $j$,
\begin{center}
$T_i^j(u)= $\begin{tikzcd}
a \arrow[r,"l"] \arrow[d, "i"']\arrow[rd,phantom,"T(u)"]
& b \arrow[d, "j"] \\
c \arrow[r,"k"']
& d 
\end{tikzcd}, $\qquad (T^*)_{-i}^{-j}(u)=$
\begin{tikzcd}
c \arrow[r,"k"] \arrow[d, "i^{-1}"']\arrow[rd,phantom,"T^*(u)"]
& d \arrow[d, "j^{-1}"] \\
a \arrow[r,"l"']
& b 
\end{tikzcd}.
\end{center}

We obtain the standard dynamical formulation $T^j_i(u,a)\in A_\alpha$ by setting the dynamical parameter $a$ to be the source of the left vertical arrow $i$, $a=s(i)$. The dynamical formulation of  $(T^*)^{-j}_{-i}(u)$ is $(T^*)^{-j}_{-i}(u,c)$ for $c=s(i^{-1})$.

The unit $\eta\colon I_v\to A$ has a non-vanishing component $\eta (1)\in\oplus_{f\in\pi_1} A_{\mathrm{id}_{f}^\circ}$ and is presented as
\begin{tikzcd}
a \arrow[r,"f"] \arrow[d, "\mathrm{id}_a"']\arrow[rd,phantom,"\eta(1)"]
& b \arrow[d, "\mathrm{id}_b"] \\
a \arrow[r,"f"']
& b 
\end{tikzcd}. We will also consider central elements $\xi^+[m,n]$ and $\xi^-[m,n]$, whose non-vanishing components have the same grading as the unit, and can be represented by a formal power series 
\[
\xi^\pm(u)=\sum_{m,n\geq 0}\xi^\pm[m,n]e^{2\pi inu/L}(q^2e^{-2\pi iu/L})^m.
\]

Let us further introduce matrix $T(u)\in\operatorname{Hom}_{\operatorname{Vect}^f_{\mathbb{C}}(\pi)}(V,A\otimes_h V)$, where $A\otimes_h V$ is $\pi$-graded, $(A\otimes_hV)_i=\oplus_{\alpha\in\Gamma^\bullet (i,j)}A_{\alpha}\otimes V_j$, and its matrix elements are $T^{j}_{i}(u)$. Similarly, we have $T^*(u)\in\operatorname{Hom}_{\operatorname{Vect}^f_{\mathbb{C}}(\pi)}(V^*,A\otimes_h V^*)$ with $\pi$-graded vector space $(A\otimes_hV^*)_{i^{-1}}=\oplus_{\alpha\in\Gamma^\bullet (i^{-1},j^{-1})}A_\alpha\otimes V^*_{j^{-1}}$ and matrix elements $(T^*)^{-j}_{-i}(u)$.

Before introducing the definition of restricted quantum group, we first need to understand the $RTT$-relations in the context of double groupoid graded algebras. We have the following commutative diagram
\[
        \begin{tikzcd}
          V\otimes_vV\arrow[r," \operatorname{id}_V\otimes_v T(u_2)"] \arrow[d,"\R_{VV}(u_1-u_2)"]
          & V\otimes_v(A\otimes_hV)\arrow[r,"T^{(1)}(u_1)"] & (A\otimes_hV)\otimes_v(A\otimes_hV)\arrow[d,"(*)"']
          \\
          V\otimes_v V \arrow[d,"\operatorname{id}_V\otimes_v T(u_1)"]  & &
          (A\otimes_vA) \otimes_h (V\otimes_vV)\arrow[d,"\nabla\otimes_h(\operatorname{id}_V\otimes_v\operatorname{id}_V)"']\\
          V\otimes_v(A\otimes_h V)\arrow[d,"T(u_2)\otimes_v(\operatorname{id}_A\otimes_h\operatorname{id}_V)"] & & A\otimes_h(V\otimes_vV)\arrow[d,"\operatorname{id}_A\otimes_h\R_{VV}(u_1-u_2)"']\\
          (A\otimes_hV)\otimes_v(A\otimes_hV)\arrow[r,"(*)"]&(A\otimes_vA) \otimes_h (V\otimes_vV)\arrow[r,"\nabla^{(12)}"]  & A\otimes_h(V\otimes_vV)
        \end{tikzcd}
      \]
      with $T^{(1)}(u_1)=T(u_1)\otimes_v(\operatorname{id}_A\otimes_h\operatorname{id}_V)$, $\nabla^{(12)}=\nabla\otimes_h (\operatorname{id}_V\otimes_v\operatorname{id}_V)$ and $(*)$ the morphism (\ref{e-iso}) where we used $\pi^2$-grading of $V$ induced from its $\pi$-grading via $V_i=V_{\operatorname{id}^\bullet_i}$.

    We also have the relations of type $\omega^{(23)}T^{(1)}(u+\lambda)T^{*(2)}(u)=\omega^{(12)}$ described by the diagram
  \[
        \begin{tikzcd}
          V\otimes_vV^*\cong (V\otimes_vV^*)\otimes_h I_v\arrow[r," \omega\otimes_h\eta"] \arrow[d,"T^{*(2)}(u+\lambda)"]
          & A \\
          V\otimes_v (A\otimes_hV^*) \arrow[d,"T^{(1)}(u)"]& A\otimes_h(V\otimes_vV^*)\arrow[u,"\omega^{(23)}"'] \\
           (A\otimes_h V)\otimes_v(A\otimes_h V^*)\arrow[r,"(*)"] & (A\otimes_vA)\otimes_h(V\otimes_vV^*)\arrow[u,"\nabla^{(12)}"'] 
        \end{tikzcd}
      \]  
      and the relations of type $T^{*(1)}(u)T^{(2)}(u+\lambda)\sigma^{(12)}=\sigma^{(23)}$ given by the commutative diagram
       \[
        \begin{tikzcd}
          \mathbf{1}\cong I_v \otimes_h \mathbf{1}\arrow[r,"\eta\otimes_h\sigma"]\arrow[d,"\sigma"]
          & A\otimes_h(V^*\otimes_v V) & (A\otimes_v A)\otimes_h(V^*\otimes_vV)\arrow[l,"\nabla^{(12)}"'] \\
          V^*\otimes_v V\arrow[r,"T^{(2)}(u+\lambda)"] & V^*\otimes_v (A\otimes_h V)\arrow[r,"T^{*(1)}(u)"] & (A\otimes_hV^*)\otimes_v(A\otimes_hV)\arrow[u,"(*)"] 
        \end{tikzcd}
      \]  
      where $(*)$ is again the morphism (\ref{e-iso}).
      \begin{definition}\label{def-gltype} Let $\check R_{VV},\omega,\omega^*$ be as in Section \ref{sec-3.1} and let
        rotations $R_{VV^*}$, $R_{V^*V}$,  $R_{V^*V^*}$ be the rotations of $R$ as in Definition \ref{Rr2}. Assume
        that the crossing symmetry \ref{def-crossing-gl} with parameter $\lambda$ holds. 
        The {\em restricted quantum algebra of general linear type} associated with these data
        is the $\pi^2$-graded unital algebra $A=A_{R,\omega,\omega^*}=\oplus_{\alpha\in\pi^2}A_{\alpha}$ generated by matrix elements $T_i^j(u)$, $(T^*)_{-i}^{-j}(u)$, $i,j\in\tilde\pi$, of $T(u)\in\operatorname{Hom}_{\operatorname{Vect}_{\mathbb{C}}(\pi)}(V,A\otimes_h V)$ and central elements $\xi_2^+(u),\xi_2^{-}(u)\in\oplus_{f\in\pi_1} A_{\mathrm{id}_{f}^\circ}$, subject to relations
  \begin{equation}
  \label{xiA}
      \xi_2^{\pm}(u)\xi_2^{\mp}(u)=\eta(1),
  \end{equation}
\begin{equation}
\label{eqn:1}
\R_{VV}^{(23)}(u_1-u_2)T^{(1)}(u_1)T^{(2)}(u_2)=T^{(1)}(u_2)T^{(2)}(u_1)\R_{VV}^{(12)}(u_1-u_2),
 \end{equation}
 \begin{equation}
\label{eqn:3}
\begin{split}
 \R_{V^*V^*}^{(23)}(u_1-u_2)T^{*(1)}(u_1)T^{*(2)}(u_2)=T^{*(1)}(u_2)T^{*(2)}(u_1)\R_{V^*V^*}^{(12)}(u_1-u_2),
 \end{split}
 \end{equation}
 \begin{equation}
\label{eqn:4}
\begin{split}
\R_{V^*V}^{(23)}(u_1-u_2)T^{*(1)}(u_1)T^{(2)}(u_2)=T^{(1)}(u_2)T^{*(2)}(u_1)\R_{V^*V}^{(12)}(u_1-u_2),
 \end{split}
 \end{equation}
  \begin{equation}
\label{eqn:5}
\omega^{(23)}T^{(1)}(u+\lambda)T^{*(2)}(u)=\omega^{(12)}\xi_2^+(u),
 \end{equation}
   \begin{equation}
\label{eqn:6}
\omega^{*(23)}T^{*(1)}(u+\lambda)T^{(2)}(u)=\omega^{*(12)},
 \end{equation}
    \begin{equation}
\label{eqn:7}
T^{*(1)}(u)T^{(2)}(u+\lambda)\sigma^{(12)}=\xi^+_2(u)\sigma^{(23)},
 \end{equation}
     \begin{equation}
\label{eqn:8}
T^{(1)}(u)T^{*(2)}(u+\lambda)\sigma^{*(12)}=\sigma^{*(23)},
 \end{equation}
where  $\omega$, $\sigma$ and $\sigma^*$ are maps defined in Section \ref{sec-3.1}.
\end{definition}
\begin{remark}
As explained in Section \ref{ss-elliptic algebras}, expanding $T_i^j(u)$, $(T^*)_i^j(u)$ as formal power series (\ref{fps}), (\ref{eqn:1})-(\ref{eqn:8}) become relations involving only finitely many terms $T_i^j[m_1,n_1]T_k^l[m_2,n_2]$. The same holds for central elements $\xi_2^\pm(u)$.
\end{remark}
It is useful to have graphical representations of the relations above, for example relations (\ref{eqn:1}), (\ref{eqn:5}) and (\ref{eqn:7}) become
\begin{center}
$\sum_{k,l}$\begin{tikzcd}
a \arrow[r] \arrow[d, "i"']\arrow[rd,phantom,"T(u_1)"]
& b\arrow[d, "k"]\arrow[rd,"m"] &  \\
f \arrow[r] \arrow[d, "j"']\arrow[rd,phantom,"T(u_2)"] &g \arrow [d, "l"] & c\arrow[ld,"n"] \\
e \arrow[r] & d &
\end{tikzcd}$=\sum_{k,l}$\begin{tikzcd}
& a \arrow[ld,"i"']\arrow[r] \arrow[d, "k"']\arrow[rd,phantom,"T(u_2)"]
& b\arrow[d, "m"] \\
 f\arrow[rd,"j"'] &g\arrow[r] \arrow[d, "l"']\arrow[rd,phantom,"T(u_1)"] & c \arrow [d, "n"] \\
& e \arrow[r] & d
\end{tikzcd}$,$
\end{center}
where the side diagrams represent $(\R_{VV})^{m,n}_{k,l}(u_1-u_2)$ and $(\R_{VV})^{k,l}_{i,j}(u_1-u_2)$, respectively, and
\begin{center}
$\sum_{k,l}$\begin{tikzcd}
a \arrow[r] \arrow[d, "i"']\arrow[rd,phantom,"T(v)"]
& b\arrow[d, "k"]\arrow[dd,bend left=40] \\
 e \arrow[r] \arrow[d, "j"'] \arrow[rd,phantom,"T^*(u)"] & f \arrow [d, "l"] \\
c\arrow[r] &  d
\end{tikzcd}$=\xi_2^+(u)\omega_{i,j},\quad$ 
$\sum_{k,l}$\begin{tikzcd}
a\arrow[dd,bend right=40] \arrow[r] \arrow[d, "k"']\arrow[rd,phantom,"T^*(u)"]
& b\arrow[d, "i"] \\
 e \arrow[r] \arrow[d, "l"'] \arrow[rd,phantom,"T(v)"]& f \arrow [d, "j"] \\
c \arrow[r] & d
\end{tikzcd}$=\xi_2^+(u)\sigma^{i,j},$
\end{center}
with $v=u+\lambda$. 

We define two maps, the co-unit $\epsilon: A_{R,\omega,\omega^*}\rightarrow I_h$ and the coproduct $\Delta :A_{R,\omega,\omega^*}\rightarrow A_{R,\omega,\omega^*}\otimes_h A_{R,\omega,\omega^*}$ by
\begin{align*}
\epsilon (T_{\mathrm{id}^\bullet_i} (u)):=1,\qquad\epsilon ((T^*)_{\mathrm{id}^\bullet_{i^{-1}}}(u)):=1,
\end{align*}
and
\begin{align*}
\copr (T^j_i (u))&:=\sum_k T^k_i(u)\otimes_h T^j_k(u),\\
\copr ((T^*)^{-j}_{-i} (u))&:=\sum_l (T^*)^{-l}_{-i}(u)\otimes_h (T^*)^{-j}_{-l}(u),
\end{align*} 
where $k$ goes over all morphisms such that $t^\circ (\beta)\circ t^\circ(\alpha)=t^\circ(\gamma)$ for $\alpha\in\Gamma^\bullet(i,k)$, $\beta\in\Gamma^\bullet(k,j)$, $\gamma\in\Gamma^\bullet(i,j)$ in the first relation and $l$ goes over all morphisms such that $t^\circ (\beta^\prime)\circ t^\circ(\alpha^\prime)=t^\circ(\gamma^\prime)$ for $\alpha^\prime\in\Gamma^\bullet(i^{-1},l^{-1})$, $\beta^\prime\in\Gamma^\bullet(l^{-1},j^{-1})$, $\gamma^\prime\in\Gamma^\bullet(i^{-1},j^{-1})$ in the second relation. 
We extend the definitions on generators multiplicatively by declaring that $\copr, \epsilon$ are algebra homomorphisms.
Note that for the co-unit one can write
\begin{align*}
\epsilon (T^j_i (u))=\delta_{i,j},\qquad\epsilon ((T^*)^{-j}_{-i}(u))=\delta_{i,j}.
\end{align*}
The graphical representation of the action of coproduct is
\begin{center}
$\epsilon\Bigg($\begin{tikzcd}
a \arrow[r] \arrow[d, "i"']\arrow[rd,phantom,"T(u)"]
& b \arrow[d, "j"] \\
c \arrow[r]
& d 
\end{tikzcd}$\Bigg)=\delta_{i,j},\qquad\epsilon\Bigg($\begin{tikzcd}
a \arrow[r] \arrow[d, "i^{-1}"']\arrow[rd,phantom,"T^*(u)"]
& b \arrow[d, "j^{-1}"] \\
c \arrow[r]
& d 
\end{tikzcd}$\Bigg)=\delta_{i,j}$,
\end{center} 
\begin{center}
$\copr\Bigg($\begin{tikzcd}
a \arrow[r] \arrow[d, "i"']\arrow[rd,phantom,"T(u)"]
& b \arrow[d, "j"] \\
c \arrow[r,"m"']
& d 
\end{tikzcd}$\Bigg)=\sum_{k, o\circ n=m}$\begin{tikzcd}
a \arrow[r] \arrow[d, "i"']\arrow[rd,phantom,"T(u)"]
& e \arrow[d, "k"] \arrow[r]\arrow[rd,phantom,"\quad T(u)"] & b \arrow[d, "j"] \\
c \arrow[r,"n"']
& f \arrow[r,"o"'] & d 
\end{tikzcd},
\end{center}
and similarly for $\copr (T^*(u))$. For the central elements $\xi_2^\pm(u)$ one has
\begin{center}
$\epsilon\Bigg($\begin{tikzcd}
a \arrow[r] \arrow[d, "\mathrm{id}_a"']\arrow[rd,phantom,"\xi_2^\pm(u)"]
& b \arrow[d, "\mathrm{id}_b"] \\
a \arrow[r]
& b 
\end{tikzcd}$\Bigg):=\delta_{a,b},$
\end{center}
\begin{center}
$\copr\Bigg($\begin{tikzcd}
a \arrow[r] \arrow[d, "\mathrm{id}_a"']\arrow[rd,phantom,"\xi_2^\pm(u)"]
& b \arrow[d, "\mathrm{id}_b"] \\
a \arrow[r,"m"']
& b 
\end{tikzcd}$\Bigg):=\sum_{c, o\circ n=m}$\begin{tikzcd}
a \arrow[r] \arrow[d, "\mathrm{id}_a"']\arrow[rd,phantom,"\xi_2^\pm(u)"]
& c \arrow[d, "\mathrm{id}_c"] \arrow[r]\arrow[rd,phantom,"\quad\xi_2^\pm(u)"] & b \arrow[d, "\mathrm{id}_b"] \\
a \arrow[r,"n"']
& c \arrow[r,"o"'] & b 
\end{tikzcd}
\end{center}
and similar for the unit element $\eta(1)$.
\begin{remark}
Strictly speaking, the definition of the co-unit and the coproduct is as follows:
\begin{align*}
    (\operatorname{id}\otimes\epsilon)(T_i^j(u))=\operatorname{id}_{V_i}\delta_{i,j},\qquad (\operatorname{id}\otimes\copr)(T_i^j(u))=m^{(13)}\sum_k T_i^k(u)\otimes T_k^j(u),
\end{align*}
and similarly for $(T^*)_i^j(u)$. However, we will use the abbreviated form to avoid cumbersome expressions in proofs.
\end{remark}
\begin{lemma}
\label{l2An}
The co-unit $\epsilon$ and the coproduct $\copr$ preserve relations (\ref{eqn:1}) - (\ref{eqn:8}) and satisfy the coalgebra axioms.
\end{lemma}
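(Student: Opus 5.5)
The plan is to check the two assertions separately: that $\epsilon$ and $\copr$ respect the defining relations, and that they are coassociative and counital. Since $\epsilon$ and $\copr$ are defined on generators and extended multiplicatively, they descend to the quotient $A_{R,\omega,\omega^*}$ exactly when each relation (\ref{xiA})--(\ref{eqn:8}) goes to zero, i.e.\ when its image in $I_h$ (resp.\ $A\otimes_h A$) holds. Before that, I check that the formulas for $\copr$ are homogeneous for the $\pi^2$-grading. The sum over intermediate $k$ with $o\circ n=m$ is exactly the decomposition of a 2-cell $\gamma\in\Gamma^\bullet(i,j)$ as a horizontal composite $\beta\bullet\alpha$. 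Because $\pi$ has at most one arrow between two objects, the bottom arrows are determined by their endpoints, so the sums are well defined and finite.

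For the counit, applying $\epsilon$ to (\ref{eqn:1}) replaces $T^{(1)}(u_1)T^{(2)}(u_2)$ by the identity on $V\otimes V$, and both sides become $\R_{VV}(u_1-u_2)$. In the same way (\ref{eqn:3}) and (\ref{eqn:4}) become tautologies for $\R_{V^*V^*}$ and $\R_{V^*V}$. Relations (\ref{eqn:5})--(\ref{eqn:8}) become $\omega=\omega$, $\omega^*=\omega^*$, $\sigma=\sigma$ and $\sigma^*=\sigma^*$, using $\epsilon(\xi_2^\pm)=1$ on identity cells, and (\ref{xiA}) becomes $1=1$. Here I must make sure $\epsilon$ is an algebra map into $I_h$ with the vertical product, which is exactly how the diagonal in $\delta_{i,j}$ is composed.

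For the coproduct, the argument is the standard "train" argument done graphically with squares. For (\ref{eqn:1}), $\copr(T^{(1)}(u_1)T^{(2)}(u_2))$ is a sum over $k,l$ of products of a left column and a right column of squares. By the interchange map (\ref{e-iso}), this equals the horizontal tensor of the two vertical products. I then apply the RTT relation in the right factor to move $\R_{VV}$ past it, and then in the left factor. The intermediate vertical arrows $k,l$ are summed over, and $\R_{VV}$ acts on them, which the interchange law accommodates. Relations (\ref{eqn:3}) and (\ref{eqn:4}) are handled identically. For (\ref{eqn:5}), $\copr$ of the left side is $\omega^{(23)}$ applied to $(T(u+\lambda)\otimes_h T(u+\lambda))(T^*(u)\otimes_h T^*(u))$. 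Contracting the inner (right) factor first gives $\omega\,\xi_2^+(u)$ in the right tensor slot, and since $\xi_2^+$ is central with group-like coproduct, I can pull it out. What remains in the left factor is again the left side of (\ref{eqn:5}), which gives $\omega\,\xi_2^+\otimes\xi_2^+=\copr(\omega\xi_2^+)$. Relations (\ref{eqn:6})--(\ref{eqn:8}) follow symmetrically. For (\ref{xiA}) I use that $\xi_2^\pm$ is group-like.

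The main obstacle is the bookkeeping in these contraction steps. One has to verify that the grading constraints on the intermediate arrows are exactly what makes $\omega^{(23)}$ collapse the inner sum. In particular, the contraction must be inserted at the right vertical arrow, which is an identity arrow, so that the pairing lands in $\mathbf 1$ and produces the identity 2-cell. I would carry this out with the square diagrams used for (\ref{eqn:5}).

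The coalgebra axioms are then routine. Coassociativity holds on generators because both $(\copr\otimes\mathrm{id})\copr(T_i^j)$ and $(\mathrm{id}\otimes\copr)\copr(T_i^j)$ equal $\sum_{k,l}T_i^k\otimes T_k^l\otimes T_l^j$, and horizontal composition is associative. The counit axiom holds because $\epsilon(T^k_i)=\delta_{i,k}$ selects $k=i$ or $k=j$. Both properties extend to all of $A$ because $\copr$ and $\epsilon$ are algebra maps (axiom (H3), via (\ref{e-iso})), and the same holds for $T^*$, $\xi_2^\pm$ and $\eta(1)$.
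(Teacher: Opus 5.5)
Your proposal is correct and follows essentially the same route as the paper. You check the counit directly on each relation, you prove compatibility of $\copr$ with the RTT-type relations by the train argument (applying the relation first in the right tensor factor, then in the left), and you handle the crossing relations by contracting one factor to produce $\xi_2^+$ (or $\eta(1)$) and then applying the relation in the other factor. You also verify coassociativity and counitality on generators and extend to all of $A$ multiplicatively; beyond the paper, you explicitly treat (\ref{xiA}) via group-likeness of $\xi_2^\pm$ and the homogeneity of $\copr$ for the $\pi^2$-grading, which the paper leaves implicit.
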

\begin{proof}
We show that the statement holds for the relations (\ref{eqn:1}) and (\ref{eqn:5}). The proof for other relations follows in an analogous way. 

The action of the co-unit on the left-hand side of (\ref{eqn:1}) gives 

$\epsilon\Bigg(\sum_{k,l}$\begin{tikzcd}
a \arrow[r] \arrow[d, "i"']\arrow[rd,phantom,"T(u_1)"]
& b\arrow[d, "k"]\arrow[rd,"m"] &  \\
f \arrow[r] \arrow[d, "j"']\arrow[rd,phantom,"T(u_2)"] &g \arrow [d, "l"] & c\arrow[ld,"n"] \\
e \arrow[r] & d &
\end{tikzcd}\Bigg)$=\sum_{k,l}\delta_{i,k}\delta_{j,l}$\begin{tikzcd}
b \arrow[r,"m"] \arrow[d, "k"']\arrow[rd,phantom,"\R_{VV}"]
& c\arrow[d, "n"] \\
g \arrow[r,"l"'] & d &
\end{tikzcd}$=\delta_{a,b}$\begin{tikzcd}
a\arrow[r,"m"] \arrow[d, "i"']\arrow[rd,phantom,"\R_{VV}"]
& c\arrow[d, "n"] \\
f \arrow[r,"j"'] &e &
\end{tikzcd}
This is equal to the right-hand side,

$\epsilon\Bigg(\sum_{k,l}$\begin{tikzcd}
& a \arrow[ld,"i"']\arrow[r] \arrow[d, "k"']\arrow[rd,phantom,"T(u_2)"]
& b\arrow[d, "m"] \\
 f\arrow[rd,"j"'] &g\arrow[r] \arrow[d, "l"']\arrow[rd,phantom,"T(u_1)"] & c \arrow [d, "n"] \\
& e \arrow[r] & d
\end{tikzcd}$\Bigg)=\sum_{k,l}\delta_{k,m}\delta_{l,n}$\begin{tikzcd}
a \arrow[r,"k"] \arrow[d, "i"']\arrow[rd,phantom,"\R_{VV}"]
& g\arrow[d, "l"] \\
f \arrow[r,"j"'] & e &
\end{tikzcd}$=\delta_{a,b}$\begin{tikzcd}
a \arrow[r,"m"] \arrow[d, "i"']\arrow[rd,phantom,"\R_{VV}"]
& c\arrow[d, "n"] \\
f \arrow[r,"j"'] &e &
\end{tikzcd}

The same holds for the identity (\ref{eqn:5}),
\begin{align*}
\sum_{k,l}\omega_{k,-l}\epsilon\left(T^k_i(u+\lambda)\right)\epsilon\left((T^*)_{-j}^{-l}(u)\right)-\omega_{i,-j}\epsilon\left(\xi_2^+(u)\right)\\
=\sum_{k,l}\omega_{k,-l}\delta_{i,k}\delta_{j,l}-\omega_{i,-j}=0.
\end{align*}
The coproduct is also compatible with the $RTT$-relations:
\begin{align*}
     &\sum_{k,l}(\check R_{VV})^{m,n}_{k,l}(u_1-u_2)\copr(T^{k}_i(u_1))\copr(T^{l}_j(u_2))\\
    &=\sum_{k,l,o,p}T^o_i(u_1) T^p_j(u_2) \otimes_h (\check R_{VV})^{m,n}_{k,l}(u_1-u_2)T^{k}_o(u_1)T^{l}_p(u_2)\\
 &=\sum_{k,l,o,p} (\check R_{VV})^{k,l}_{o,p}(u_1-u_2)T^o_i(u_1) T^p_j(u_2) \otimes_h  T^{m}_k(u_2)T^{n}_l(u_1)\\
    &=\sum_{k,l,o,p} T^{k}_o(u_2)T^{l}_p(u_1)(\check R_{VV})^{o,p}_{i,j}(u_1-u_2)\otimes_h T^{m}_k(u_2)T^{n}_l(u_1) \\
     &=\sum_{o,p}\copr(T^{m}_o(u_2))\copr(T^{n}_p(u_1))(\check R_{VV})^{o,p}_{i,j}(u_1-u_2),
\end{align*}
and with the identity (\ref{eqn:5}):
\begin{align*}
    &\sum_{k,l}\omega_{k,-l}\copr(T^{k}_i(u+\lambda))\copr((T^*)^{-l}_{-j}(u))\\
    &=\sum_{k,l,o,p}\omega_{k,-l}T^o_i(u+\lambda) (T^*)^{-p}_{-j}(u) \otimes_h T^{k}_o(u+\lambda)(T^*)^{-l}_{-p}(u)\\
    &=\xi_2^+(u)\otimes\sum_{o,p}\omega_{o,-p}T^o_i(u+\lambda) (T^*)^{-p}_{-j}(u)=\omega_{i,-j}\xi_2^+(u)\otimes_h\xi_2^+(u)=\omega_{i,-j}\copr(\xi_2^+(u)).
\end{align*}

Finally, we prove that the coalgebra axioms are satisfied:
\begin{enumerate}
\item $(\operatorname{id}\otimes_h\copr)\circ\copr=(\copr\otimes_h\operatorname{id})\circ\copr$:
\begin{align*}
(\operatorname{id}\otimes_h\copr)\circ\copr\left(T^j_i(u)\right)&=(\operatorname{id}\otimes_h\copr)\sum_k T^k_i(u)\otimes_h T^j_k(u)\\
&=\sum_k T^k_i(u)\otimes_h \sum_l T^l_k(u)\otimes_h T^j_l(u)\\
&=(\copr\otimes_h\operatorname{id})\sum_l T^l_i(u)\otimes_h T^j_l(u)\\
&=(\copr\otimes_h\operatorname{id})\circ\copr\left(T^j_i(u)\right),
\end{align*}
\item $(\operatorname{id}\otimes_h\epsilon)\circ\copr=(\epsilon\otimes_h\operatorname{id})\circ\copr=\operatorname{id}$:
\begin{align*}
(\operatorname{id}\otimes_h\epsilon)\circ\copr\left(T^j_i(u)\right)&=(\operatorname{id}\otimes_h\epsilon)\sum_k T^k_i(u)\otimes_h T^j_k(u)\\
&=\sum_k T^k_i(u)\delta_{k,j}=T^j_i(u)\\
(\epsilon\otimes_h\operatorname{id})\circ\copr\left(T^j_i(u)\right)&=(\epsilon\otimes_h\operatorname{id})\sum_k T^k_i(u)\otimes_h T^j_k(u)\\
&=\sum_k \delta_{i,k} T^j_k(u)=T^j_i(u),
\end{align*}
\end{enumerate}
and similarly for the $(T^*)^{-i}_{-j}(u)$ type of generator and $\alpha(u)$.
\end{proof}
Since these maps respect the defining relations, they descend to well-defined maps on $A_{R,\omega,\omega^*}$.

Define further the antipode $S:A_{R,\omega,\omega^*}\rightarrow A_{R,\omega,\omega^*}^{\textrm{op}}$ as
\begin{align}\label{e-antip}
  S(T^j_i(u))&:= \xi_2^-(u-\lambda)\omega_{i,-i}(T^*)_{-j}^{-i}(u-\lambda)\sigma^{-j,j},\notag
  \\
S((T^*)_{-i}^{-j}(u))&:=\omega_{-i,i}^*T^i_j(u-\lambda)\sigma^{*j,-j},\\
  S(\xi_2^{\pm}(u))&:=\xi_2^{\mp}(u),\notag
  \\
S(\eta(1))&:=\eta(1).\notag
\end{align}
Graphically, it is given by
\begin{center}
$S\Bigg($\begin{tikzcd}
a \arrow[r,"l"] \arrow[d, "i"']\arrow[rd,phantom,"T(u)"]
& b \arrow[d, "j"] \\
c \arrow[r,"k"']
& d 
\end{tikzcd}$\Bigg)=\displaystyle\xi_2^-(u-\lambda)\omega_{i,-i}$\begin{tikzcd}
d \arrow[r, "k^{-1}"] \arrow[d, "j^{-1}"']\arrow[rd,phantom,"T^*(v)"]
& c \arrow[d, "i^{-1}"] \\
b \arrow[r,"l^{-1}"']
& a 
\end{tikzcd}$\sigma^{-j,j}$, 
\end{center}
\begin{center}
$S\Bigg($\begin{tikzcd}
a \arrow[r,"l^{-1}"] \arrow[d, "i^{-1}"']\arrow[rd,phantom,"T^*(u)"]
& b \arrow[d, "j^{-1}"] \\
c \arrow[r,"k^{-1}"']
& d 
\end{tikzcd}$\Bigg)=\displaystyle\omega_{-i,i}^*$\begin{tikzcd}
d \arrow[r, "k"] \arrow[d, "j"']\arrow[rd,phantom,"T(v)"]
& c \arrow[d, "i"] \\
b \arrow[r,"l"']
& a 
\end{tikzcd}$\sigma^{*j,-j}$, 
\end{center}
\begin{center}
$S\Bigg($\begin{tikzcd}
a \arrow[r,"k"] \arrow[d, "\mathrm{id}_a"']\arrow[rd,phantom,"\eta(1)"]
& b \arrow[d, "\mathrm{id}_b"] \\
a \arrow[r,"k"']
& b 
\end{tikzcd}$\Bigg)=$\begin{tikzcd}
b \arrow[r, "k^{-1}"] \arrow[d, "\mathrm{id}_b"']\arrow[rd,phantom,"\eta(1)"]
& a \arrow[d, "\mathrm{id}_a"] \\
b \arrow[r,"k^{-1}"']
& a 
\end{tikzcd}, 
\end{center}
with $v=u-\lambda$.
\begin{remark}
    The precise definition of antipode is
    \begin{align*}
        (\operatorname{id}\otimes S)(T_i^j(u))&:= \xi_2^-(u-\lambda)\omega_{i,-i}(T^*)_{-j}^{-i}(u-\lambda)\sigma^{-j,j},\\
(\operatorname{id}\otimes S)                                            ((T^*)_{-i}^{-j}(u))&:=\omega_{-i,i}^*T^i_j(u-\lambda)\sigma^{*j,-j},\\
    \end{align*}
    but we adopt the simplified notation for brevity.
\end{remark}
\begin{lemma}
\label{l3An}
The antipode $S$ satisfies the antipode axioms and preserves relations (\ref{eqn:1})-(\ref{eqn:8}).
\end{lemma}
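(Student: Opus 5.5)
The proof splits into two parts: the antipode axioms (H7), and the fact that $S$, extended as an anti-homomorphism $A\to A^{\mathrm{op}}$, respects the defining relations (\ref{xiA})--(\ref{eqn:8}), so that it descends to $A_{R,\omega,\omega^*}$ and (H6) holds. It suffices to check (H7) on generators. The set of elements for which both diagrams of (H7) commute is closed under products, because $\Delta$ and $\epsilon$ are algebra maps and $S$ is an anti-homomorphism; the $\pi^2$-grading bookkeeping here follows the compatibility of the interchange map (\ref{e-iso}) with $\nabla$.

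\textbf{Antipode axioms on $T(u)$.} For $T^j_i(u)$, the composite $\nabla(S\otimes\mathrm{id})\Delta$ is
\[
\sum_k \xi_2^-(u-\lambda)\,\omega_{i,-i}\,(T^*)^{-i}_{-k}(u-\lambda)\,\sigma^{-k,k}\,T^j_k(u).
\]
Writing $v=u-\lambda$, this is exactly $\xi_2^-(v)$ times the contraction of $T^{*(1)}(v)T^{(2)}(v+\lambda)\sigma^{(12)}$ with $\omega_{i,-i}$. By (\ref{eqn:7}) this becomes
\[
\xi_2^-(v)\,\xi_2^+(v)\,\omega_{i,-i}\,\sigma^{-i,j}=\eta(1)\,\delta_{i,j},
\]
using (\ref{xiA}) and the duality relations (\ref{sigmaii}). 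The result is $\eta\sigma'\epsilon(T^j_i)$, as required. For the other order, $\nabla(\mathrm{id}\otimes S)\Delta$, one obtains
\[
\sum_k T^k_i(u)\,\xi_2^-(v)\,\omega_{k,-k}\,(T^*)^{-k}_{-j}(v)\,\sigma^{-j,j}.
\]
Since $\xi_2^-$ is central, relation (\ref{eqn:5}) gives $\omega_{i,-j}\,\xi_2^+(v)$, and (\ref{xiA}) together with (\ref{os}) reduces this to $\delta_{i,j}\eta(1)$.

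\textbf{Antipode axioms on $T^*(u)$ and the central elements.} For $(T^*)^{-j}_{-i}$ the two orders use (\ref{eqn:8}) and (\ref{eqn:6}) in the same way, with $\omega^*,\sigma^*$ in place of $\omega,\sigma$ and no $\xi$ factor. For $\xi_2^\pm$ and $\eta(1)$ the axioms follow directly from (\ref{xiA}) and the group-like coproduct.

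\textbf{Preservation of relations.} Relation (\ref{xiA}) and centrality of the $\xi$'s are immediate. For the RTT relation (\ref{eqn:1}), apply $S$ and reverse the order of the factors. This turns it into a relation of the form
\[
\R^{(\cdot)}\,S(T^{(2)}(u_2))\,S(T^{(1)}(u_1))=\cdots.
\]
Substitute $S(T)=\xi_2^-\,\omega\,T^*\,\sigma$ and move the $\omega$'s and $\sigma$'s through the $R$-matrix. By Definition \ref{Rr2}, rotating $\R_{VV}$ twice gives $\R_{V^*V^*}$, and Lemma \ref{rotpi} makes the two possible rotation conventions agree. The relation therefore becomes (\ref{eqn:3}) at shifted spectral parameters, multiplied by the central factor $\xi_2^-(u_1-\lambda)\xi_2^-(u_2-\lambda)$. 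Similarly:
\begin{itemize}
\item (\ref{eqn:3}) maps to (\ref{eqn:1});
\item (\ref{eqn:4}) maps to the relation with $\R_{V^*V}$ or its rotation $\R_{VV^*}$, which is reduced back to (\ref{eqn:4}) using Corollary \ref{rotcor} and the inversion identities of Lemma \ref{inversion};
\item the crossing relations (\ref{eqn:5})--(\ref{eqn:8}) are permuted among themselves, e.g.\ $S$ of (\ref{eqn:5}) yields (\ref{eqn:8}) after cancelling a $\xi_2^-$ against $\xi_2^+$ via (\ref{xiA}).
\end{itemize}

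\textbf{Expected obstacle.} The delicate part is (\ref{eqn:4}) and the crossing relations. There one must track the factors $G(s)/G(t)$ relating $\omega^*$ to $\omega$ via (\ref{omegastar}) and (\ref{sigmastar}), show that they cancel by the commutative-square identities used in the proof of Lemma \ref{rotpi}, and check that the spectral shifts by $\lambda$ line up. I would first do this componentwise in the graphical square notation: bend the $\sigma$, $\omega$ caps around each $R$-matrix square and identify the result with a rotation. The remaining cases then follow analogously.
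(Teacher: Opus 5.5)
Your proposal is correct and follows essentially the same route as the paper: (H7) is checked on generators via (\ref{eqn:5})--(\ref{eqn:8}) together with (\ref{xiA}), $S$ applied to (\ref{eqn:1}) is rewritten as (\ref{eqn:3}) (and conversely) using the rotation identities and Lemma~\ref{rotpi}, (\ref{eqn:4}) is handled through Corollary~\ref{rotcor} and Lemma~\ref{inversion}, and $S$ permutes the crossing relations (e.g.\ (\ref{eqn:5}) goes to (\ref{eqn:8})). Your ordering of the logic is slightly cleaner than the paper's remark that anti-multiplicativity follows from the axioms: you define $S$ as an anti-homomorphism, check the relations, and then extend (H7) from generators by multiplicativity. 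You also keep track of the $\xi_2^-$ factors, which the paper drops in its RTT computation.
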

\begin{proof}
The antipode axioms (H7) are described precisely by the identities (\ref{eqn:5})-(\ref{eqn:8}). To see this, first note that the projection map $A\otimes A\stackrel{p}\twoheadrightarrow A\otimes_vA$ sends $$T^j_i(u)\otimes T^l_k(v)\mapsto\delta_{t^\circ(\alpha_{ij}),s^\circ(\alpha_{kl})} T^j_i(u)\otimes_v T^l_k(v),$$ where $\alpha_{ij}$ denotes a morphism $\alpha\in\pi^2$ with $s^{\bullet}=i$ and $t^{\bullet}=j$, as in the picture: 
\begin{center}
\begin{tikzcd}
a \arrow[r,"n_1"] \arrow[d, "i"']\arrow[rd,phantom,"T(u)"]
& b \arrow[d, "j"] \\
c \arrow[r, "m_1"']
& d 
\end{tikzcd}$\otimes$ \begin{tikzcd}
e \arrow[r,"n_2"] \arrow[d, "k"']\arrow[rd,phantom,"T(v)"]
& f \arrow[d, "l"] \\
g \arrow[r, "m_2"']
& h 
\end{tikzcd}$\mapsto\delta_{m_1,n_2}$\begin{tikzcd}
a \arrow[r,"n_1"] \arrow[d, "i"']\arrow[rd,phantom,"T(u)"]
& b \arrow[d, "j"] \\
c \arrow[r, "m_1"']
& d 
\end{tikzcd}$\otimes_v$ \begin{tikzcd}
c \arrow[r,"m_1"] \arrow[d, "k"']\arrow[rd,phantom,"T(v)"]
& d \arrow[d, "l"] \\
g \arrow[r, "m_2"']
& h 
\end{tikzcd},
\end{center}
which upon the action of the product $\nabla$ becomes
\begin{center}
\begin{tikzcd}
a \arrow[r,"n_1"] \arrow[d, "i"']\arrow[rd,phantom,"T(u)"]
& b \arrow[d, "j"] \\
c \arrow[r, "m_1"']
& d 
\end{tikzcd}$\otimes_v$ \begin{tikzcd}
c \arrow[r,"m_1"] \arrow[d, "k"']\arrow[rd,phantom,"T(v)"]
& d \arrow[d, "l"] \\
g \arrow[r, "m_2"']
& h 
\end{tikzcd}$\mapsto$
\begin{tikzcd}
a \arrow[r, "n_1"] \arrow[d, "i"']\arrow[rd,phantom,"T(u)"]
& b \arrow[d, "j"]\\
c \arrow[d, "k"'] \arrow[r, "m_1"] \arrow[rd,phantom,"T(v)"]
& d \arrow[d, "l"]\\
g \arrow[r, "m_2"'] & h
\end{tikzcd}.
\end{center}
Therefore, we have 
\begin{align*}
&\nabla\circ p\circ (\operatorname{id}\otimes S)\circ i\circ \copr \left(T^j_i(u)\right)\\
&=\nabla\circ p\circ (\operatorname{id}\otimes S)\left(\sum_k T^k_i(u)\otimes T^j_k(u)\right)\\
&=\nabla\circ p\left(\sum_k T^k_i(u)\otimes \xi_2^-(u-\lambda)\omega_{k,-k}(T^*)^{-k}_{-j}(u-\lambda)\sigma^{-j,j}\right)\\
&=\delta_{t^{\circ}(\alpha_{i,k}),s^{\circ}(\alpha_{j^{-1},k^{-1}})}\xi_2^-(u-\lambda)\sum_k \omega_{k,-k}T^k_i(u)(T^*)^{-k}_{-j}(u-\lambda)\sigma^{-j,j}\\
&=\omega_{i,-j}\sigma^{-j,j}\xi_2^-(u-\lambda)\xi_2^+(u-\lambda)
=\delta_{i,j}\eta(1)=\eta\circ\sigma^\prime\circ\epsilon\left(T^{j}_i(u)\right)
\end{align*}
and
\begin{align*}
&\nabla\circ p\circ (\operatorname{id}\otimes S)\circ i\circ \copr \left((T^*)^{-j}_{-i}(u)\right)\\
&=\nabla\circ p\circ (\operatorname{id}\otimes S)\left(\sum_k (T^*)^{-k}_{-i}(u)\otimes(T^*)^{-j}_{-k}(u)\right)\\
&=\nabla\circ p\left(\sum_k (T^*)^{-k}_{-i}(u)\otimes\omega^*_{-k,k}T^{k}_{j}(u-\lambda)\sigma^{*j,-j}\right)\\
&=\delta_{t^{\circ}(\alpha_{i^{-1},k^{-1}}),s^{\circ}(\alpha_{j,k})}\sum_k \omega^*_{-k,k} (T^*)^{-k}_{-i}(u)T^{k}_{j}(u-\lambda)\sigma^{*j,-j}\\
&=\omega^*_{-i,j}\sigma^{*j,-j}\eta(1)
=\delta_{i,j}\eta(1)=\eta\circ\sigma^\prime\circ\epsilon\left((T^*)^{-j}_{-i}(u)\right).
\end{align*}
Moreover, we have $\nabla\circ p\circ (\operatorname{id}\otimes S)\circ i\circ \copr(\xi_2^{\pm}(u))=\xi_2^{\pm}(u)\xi_2^{\mp}(u)=\eta(1)=\eta\circ\sigma^\prime\circ\epsilon(\xi_2^{\pm}(u))$.
The other identity follows in a similar way. From these axioms one can in a standard way prove that the map $S$ is an antihomomorphism.

Furthermore, we show that the antipode preserves the $RTT$-relations. To do so, we first observe that from the rotation relations (\ref{Rrel}) together with $\sigma^{-i,i}\omega_{i,-i}=1$ and  follow the identities
\begin{align*}
    \sigma^{-l,l}\sigma^{-k,k}(\R_{VV})_{k,l}^{m,n}(u)&=(\R_{V^*V^*})_{-n,-m}^{-l,-k}(u)\sigma^{-m,m}\sigma^{-n,n},\\
    (\R_{VV})_{k,l}^{m,n}(u)\omega_{n,-n}\omega_{m,-m}&=\omega_{k,-k}\omega_{l,-l}(\R_{V^*V^*})_{-n,-m}^{-l,-k}(u).
\end{align*}
Now we apply the antipode on the relation (\ref{eqn:1}):
\begin{align*}
&\sum_{k,l}(\R_{VV})^{m,n}_{k,l}(u_1-u_2)S\left(T^{k}_i(u_1)T^{l}_j(u_2)\right)\\
&\quad -\sum_{k,l}S\left(T^{m}_k(u_2)T^{n}_l(u_1)\right)(\R_{VV})^{k,l}_{i,j}(u_1-u_2)\\
&=\sum_{k,l}\omega_{j,-j}(T^*)^{-j}_{-l}(v_2)\sigma^{-l,l} \omega_{i,-i}(T^*)^{-i}_{-k}(v_1)\sigma^{-k,k}(\R_{VV})^{m,n}_{k,l}(v_1-v_2)\\
&\quad-\sum_{k,l}(\R_{VV})^{k,l}_{i,j}(v_1-v_2)\omega_{l,-l}(T^*)^{-l}_{-n}(v_1)\sigma^{-n,n}\omega_{k,-k}(T^*)^{-k}_{-m}(v_2)\sigma^{-m,m}\\
&=\omega_{i,-i}\omega_{j,-j}\Bigg(\sum_{k,l}(T^*)^{-j}_{-l}(v_2) (T^*)^{-i}_{-k}(v_1)(\R_{V^*V^*})^{-l,-k}_{-n,-m}(v_1-v_2)\\
&\quad -\sum_{k,l}(\R_{V^*V^*})^{-j,-i}_{-l,-k}(v_1-v_2)(T^*)^{-l}_{-n}(v_1)(T^*)^{-k}_{-m}(v_2)\Bigg)\sigma^{-m,m}\sigma^{-n,n}=0,
\end{align*}
where we set $v_i:=u_i-\lambda$, $i=1,2$ and we use the relation (\ref{eqn:3}). 

On the other hand, using the definition of $\R_{V^*V^*}'(u)$ and Lemma \ref{rotpi}, one can see that the action of antipode on the relation (\ref{eqn:3}) gives back the relation (\ref{eqn:1}). 

To show that the relation (\ref{eqn:4}) is preserved upon the action of antipode, we first observe that due to Corollary \ref{rotcor}, we have the following identities
\begin{align*}
    \sigma^{*k,-k}\sigma^{-l,l}(\R_{VV^*})_{l,-k}^{-j,i}(u)&=(\R_{V^*V})_{-i,j}^{k,-l}(u)\sigma^{*j,-j}\sigma^{-i,i},\\
    (\R_{VV^*})_{l,-k}^{-j,i}(u)\omega_{i,-i}\omega^*_{-j,j}&=\omega_{l,-l}\omega^*_{-k,k}(\R_{V^*V})_{-i,j}^{k,-l}(u),\\
    \sigma^{-j,j}\sigma^{*i,-i}(\R_{V^*V})_{-i,j}^{k,-l}(u)&=(\R_{VV^*})_{l,-k}^{-j,i}(u)\sigma^{-k,k}\sigma^{*l,-l},\\ 
    (\R_{V^*V})_{-i,j}^{k,-l}(u)\omega^*_{-l,l}\omega_{k,-k}&=\omega^*_{-i,i}\omega_{j,-j}(\R_{VV^*})_{l,-k}^{-j,i}(u).
\end{align*}
Now, since $\R_{VV}(u)$ is quasi-unitary, we have the inversion identity $\R_{VV^*}(u)=\alpha(u)\R_{V^*V}^{-1}(-u)$ due to Lemma \ref{inversion}, and we can write the relation (\ref{eqn:4}) in the following form:
\begin{equation*}
\begin{split}
\R_{VV^*}^{(23)}(u_1-u_2)T^{(12)}(u_1)T^{*(23)}(u_2)
&=T^{*(12)}(u_2)T^{(23)}(u_1)\R_{VV^*}^{(12)}(u_1-u_2),
\end{split}
\end{equation*}
which can be written as
\begin{equation*}
\begin{split}
 \sum_{k,l}(\check R_{VV^*})^{-m,n}_{k,-l}(u_1-u_2)T^{k}_i(u_1)(T^*)^{-l}_{-j}(u_2)\\
 =\sum_{k,l}(T^*)^{-m}_{-k}(u_2)T^{n}_l(u_1)(\check R_{VV^*})^{-k,l}_{i,-j}(u_1-u_2).
 \end{split}
 \end{equation*}
The action of the antipode on this relation with the help of the above identities for $\R_{VV^*}(u)$ and $\R_{V^*V}(u)$ gives the relation (\ref{eqn:4}) and vice versa.

Next, we prove that the identities (\ref{eqn:5})--(\ref{eqn:8}) are preserved. Applying the antipode on the left-hand side of equation (\ref{eqn:5}) gives
\begin{align*}
&\sum_{kl}\omega_{k,-l}S\left(T^k_i(u+\lambda)(T^*)_{-j}^{-l}(u)\right)\\
&=\sum_{kl}\omega^*_{-j,j}T^{j}_{l}(u-\lambda)\sigma^{*l,-l}\xi_2^-(u)\omega_{i,-i}(T^*)_{-k}^{-i}(u)\omega_{k,-l}\sigma^{-k,k}\\
&=\omega_{i,-i}\omega^*_{-j,j}\sum_{kl}T^{j}_{l}(u-\lambda)(T^*)_{-k}^{-i}(u)\sigma^{*l,-l}\delta_{k,l}\xi_2^-(u)\\
&=\omega_{i,-i}\omega^*_{-j,j}\sigma^{*j,-i}\xi_2^-(u)=\omega_{i,-j}S(\xi_2^+(u)).
\end{align*}
It is useful to have a graphical understanding of this proof:
\begin{center}
\begin{tikzcd}
a \arrow[r]\arrow[d,"i"'] & b \arrow[d,"k"']\arrow[dd,bend left=40]\\
c \arrow[d,"j^{-1}"']\arrow[r] & d\arrow[d,"l^{-1}"']\\
e \arrow[r] & f
\end{tikzcd}$\xrightarrow{S}$
\begin{tikzcd}
 &  & c\arrow[d,"j^{-1}"']\arrow[dd,bend left=40] &  \\
f \arrow[dd,bend right=40] \arrow[d,"l"] & f \arrow[r]\arrow[d,"l"'] & e \arrow[d,"j"'] & a\arrow[d,"i"']\arrow[dd,bend left=40]\\
d\arrow[d,"l^{-1}"] & d \arrow[d,"k^{-1}"]\arrow[r]\arrow[dd,bend right=40] & c\arrow[d,"i^{-1}"] & c\arrow[d,"i^{-1}"']\\
f & b \arrow[r]\arrow[d,"k"] \arrow[dd,bend left=40]& a & a \\
 & d\arrow[d,"l^{-1}"'] & & \\
 & f & &
\end{tikzcd}
\end{center}
Now we can contract $\omega_{k,-l}\sigma^{-k,k}=\delta_{k,l}$, which further allows the action of $\sigma^{*l,-l}$:
\begin{center}
    \begin{tikzcd}
 & c\arrow[d,"j^{-1}"']\arrow[dd,bend left=40] &  \\
  f \arrow[r]\arrow[d,"l"] \arrow[dd,bend right=40] & e \arrow[d,"j"'] & a\arrow[d,"i"']\arrow[dd,bend left=40]\\
 d \arrow[d,"l^{-1}"]\arrow[r] & c\arrow[d,"i^{-1}"] & c\arrow[d,"i^{-1}"']\\
 f \arrow[r]& a & a 
\end{tikzcd}$=$
\begin{tikzcd}
 c\arrow[d,"j^{-1}"']\arrow[dd,bend left=40] &  \\
 e\arrow[dd,bend right=40] \arrow[d,"j"'] & a\arrow[d,"i"']\arrow[dd,bend left=40]\\
 c\arrow[d,"i^{-1}"] & c\arrow[d,"i^{-1}"']\\
 a & a 
\end{tikzcd},
\end{center}
where the equality comes from relation (\ref{eqn:8}). The left diagram on the right-hand side corresponds to contraction $\omega^*_{-j,j}\sigma^{*j,-i}=\delta_{i,j}$ which together with the right diagram gives \begin{tikzcd}
 a\arrow[d,"i"']\arrow[dd,bend left=40]\\
 c\arrow[d,"j^{-1}"']\\
 e 
\end{tikzcd}$=\omega_{i,-j}$.

The proof for other relations follows in an analogous way.
\end{proof}
\begin{theorem}
The restricted quantum algebra $A_{R,\omega,\omega^*}$ equipped with $(\copr,\nabla, \epsilon,\eta,S)$ is a double groupoid graded Hopf algebra.
\end{theorem}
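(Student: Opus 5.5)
The proof will go through the axioms (H1)--(H7) of Section~\ref{ss-2.5} one at a time, using the two lemmas just established. For (H1), $A_{R,\omega,\omega^*}$ is by construction a quotient of the free $\Gamma^\circ$-graded algebra on the homogeneous generators $T_i^j[m,n]$, $(T^*)_{-i}^{-j}[m,n]$, $\xi_2^\pm[m,n]$. I will first check that each relation \eqref{xiA}--\eqref{eqn:8} is homogeneous for the $\pi^2$-grading, Fourier coefficient by Fourier coefficient. For the $RTT$-type relations this reduces to the fact that the matrix element $\R^{kl}_{ij}$ sits in the component of the commutative square formed by $i,j,k,l$ (Example~\ref{ex-2.5}). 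For \eqref{eqn:5}--\eqref{eqn:8} it reduces to $\omega,\sigma,\omega^*,\sigma^*$ being morphisms to or from $\mathbf 1$. The relations then generate a two-sided ideal in the sense of Section~\ref{sec-2.3}, and the quotient is a unital algebra for $\otimes_v$.

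For (H2)--(H4), I define $\copr$ and $\epsilon$ on the free algebra as the unique unital algebra maps extending the formulas on generators. Here $\copr$ lands in $A\otimes_h A$ with its vertical algebra structure, which is transported through the interchange morphism \eqref{e-iso}. Multiplicativity therefore encodes exactly the commutativity of the diagram in (H3), and unitality gives (H4). Lemma~\ref{l2An} says these maps kill the defining ideal, so they descend to $A$. The same lemma gives coassociativity and counitality on generators. Both sides of these identities are algebra maps, so the identities extend to all of $A$. For (H5) I only need $\epsilon(\eta(1)_{\mathrm{id}^\circ_f})=\delta_{f,\mathrm{id}}$, which is the stated formula $\epsilon(\xi)=\delta_{a,b}$ applied to the unit.

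For (H6)--(H7), I define $S$ on generators by \eqref{e-antip}, extend it as an anti-homomorphism, and first check that it maps the component labelled by $\alpha$ to the one labelled by $\iota(\alpha)$. The graphical form of $S$ shows this directly: the square is inverted both horizontally and vertically. Lemma~\ref{l3An} shows that $S$ preserves the relations, so $S\colon A\to A^{\mathrm{op}}$ is well defined. It also shows that the antipode identities (H7) hold on generators, by \eqref{eqn:5}--\eqref{eqn:8} and \eqref{xiA}.

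The main obstacle is extending (H7) from generators to products. I plan to use the standard convolution argument adapted to the double grading. If (H7) holds for homogeneous $x,y$, then for a composable product $xy$ one has
\[
\nabla p(\mathrm{id}\otimes S)i\copr(xy)=\sum x_{(1)}y_{(1)}S(y_{(2)})S(x_{(2)}).
\]
Applying (H7) to $y$ collapses the inner factor to $\eta\sigma'\epsilon(y)$. The counit of $y$ lies in the identity 2-cell components, so it can be moved past $x_{(1)}$ using the horizontal grading and the counit axiom. Applying (H7) to $x$ then finishes the step. Here one has to track that the projection $p$ and the inclusion $i$ are compatible with these manipulations; this is exactly where the support conditions from the commutative squares enter.

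Bijectivity in (H6) also needs an argument. I expect to show that $S^2$ is given on generators by $T_i^j(u)\mapsto(\text{scalar})\,T_i^j(u-2\lambda)$, with scalars coming from $\omega\sigma^*$ and $\omega^*\sigma$ contractions, i.e.\ ratios of $G$ as in \eqref{omegastar} and \eqref{sigmastar}. It then suffices to show that the map $u\mapsto u-2\lambda$ on generating series is invertible, which is clear since it is an automorphism of the relations. If this fails formally, I would instead construct $S^{-1}$ directly by the analogous formulas with $u+\lambda$, using \eqref{eqn:6} and \eqref{eqn:8}.
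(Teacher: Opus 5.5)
Your proposal follows the paper's proof, which simply combines the by-construction bialgebra structure with Lemma~\ref{l2An} and Lemma~\ref{l3An}. Your extension of (H7) from generators to products and your invertibility argument for $S$ spell out what the paper leaves implicit, the latter being essentially Theorem~\ref{t-S2}. One small correction there: $S^2(T^j_i(u))$ also carries the central factor $\xi_2^+(u-\lambda)$, so $S^2=D_\gamma\circ\psi$ rather than a rescaled shift; it is still invertible, since $\xi_2^\pm$ are central, mutually inverse by \eqref{xiA}, and fixed by $S^2$.
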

\begin{proof}
Bialgebra axioms are satisfied by construction. This, together with Lemma \ref{l2An} and Lemma \ref{l3An} proves the theorem.
\end{proof}
\begin{definition}
We call the $\pi^2$-graded Hopf algebra $(A_{R_{VV},\omega,\omega^*},\copr,\nabla, \epsilon,\eta,S)$ the {\em restricted quantum group of general linear  type} associated with $\R_{VV}(u)$, $\omega, \omega^*$.
\end{definition}
Our main example is given by the $R$-matrix of the restricted $A_{n-1}$-model of Section \ref{sec-4}. The rotations are
introduced in Section \ref{4.3} and the crossing symmetry is the content of Lemma \ref{wttan} with $\lambda=-n/2$.
\begin{definition}
  Let $\ell\in\mathbb Z_{\geq0}$, $n\geq2$, $L=\ell+n$, $\lambda=-n/2$,
  and $\check R_{VV}$ be the $R$-matrix of the restricted $A_{n-1}^{(1)}$-model of
  Section \ref{sec-4} with $\omega,\omega^*$ given by \eqref{e-omega*}. The $\pi^2$-graded Hopf algebra associated
  with this data is called restricted elliptic quantum group $E_\ell(A_{n-1}^{(1)})$ of type $A_{n-1}^{(1)}$ of level $\ell$.
\end{definition}
\begin{remark} For this algebra one can construct, as in \cite{FelderVarchenkoJSP1997},
  a quantum determinant which is central and group-like. This will be discussed elsewhere.
\end{remark}
\subsection{Restricted quantum groups of orthogonal type}
    Let $V\in\operatorname{Vect}^f_{\mathbb{C}}(\pi)$ be a finite dimensional $\pi$-graded vector space and let $\tilde\pi\subseteq\pi $ denote the set that contains all the arrows $k\in\pi_1$ with $V_k\neq\{0\}$ and their inverses $k^{-1}\in\pi_1.$ Consider the generators $ T^i_j(u)\in\operatorname{Hom}_{\mathbb{C}}(V_{j},V_{i})\otimes A_{\alpha}$ described in the Section \ref{sec-5.1}.
\begin{definition}\label{def2}
  Let $\check R(u)\in \operatorname{End}(V \otimes V )$ be a rotationally invariant $R$-matrix on the $\pi$-graded vector space $V$ with inner product $\omega$ and crossing parameter $\lambda$.
  The {\em restricted quantum algebra} $A_{R,\omega}$ {\em of orthogonal type} associated with $R$, $\omega$
  is the $\pi^2$-graded unital algebra $A=\oplus_{\alpha\in\pi^2}A_{\alpha}$ generated by matrix elements $T^i_j(u)$,
$i,j\in\tilde\pi$, of $T(u)\in\operatorname{Hom}_{\operatorname{Vect}_{\mathbb{C}}(\pi)}(V,A\otimes_h V)$ and central elements $\xi^+(u),\xi^-(u)\in\oplus_{f\in\pi_1} A_{\mathrm{id}_{f}^\circ}$,
subject to relations
\begin{equation}
\label{xi}
    \xi^{\pm}(u)\xi^{\mp}(u)=\eta(1),
\end{equation}
\begin{equation}
\label{eqn:RTTc}
\check R^{(23)}(u_1-u_2)T^{(1)}(u_1)T^{(2)}(u_2)=T^{(1)}(u_2)T^{(2)}(u_1)\check R^{(12)}(u_1-u_2),
 \end{equation}
 \begin{equation}
 \label{TTwc}
\omega^{(23)}T^{(1)}(u+\lambda)T^{(2)}(u)=\xi^+(u)\omega^{(12)},
\end{equation}
\begin{equation}
\label{TTsc}
T^{(1)}(u)T^{(2)}(u+\lambda)\sigma^{(12)}=\xi^+(u)\sigma^{(23)},
\end{equation}
\end{definition}

We define two maps, the co-unit $\epsilon: A_{R,\omega}\rightarrow I_h$ and the coproduct $\Delta :A_{R,\omega}\rightarrow A_{R,\omega}\otimes_h A_{R,\omega}$ by
\begin{align*}
\epsilon (T_{\textrm{id}^\bullet_i}(u)):=1,\qquad\epsilon(\xi^\pm(u)):=1,
\end{align*}
and
\begin{align*}
\copr (T^i_j (u)):=\sum_k T^k_j(u)\otimes_h T^i_k(u),\qquad\copr(\xi^\pm(u)):=\xi^\pm(u)\otimes_h\xi^\pm(u),
\end{align*} 
where $k$ goes over all morphisms such that $t^\circ (\beta)\circ t^\circ(\alpha)=t^\circ(\gamma)$ for $\alpha\in\Gamma^\bullet(i,k)$, $\beta\in\Gamma^\bullet(k,j)$, $\gamma\in\Gamma^\bullet(i,j)$.
\begin{lemma}
\label{l2}
The co-unit $\epsilon$ and the coproduct $\copr$ preserve relations (\ref{eqn:RTTc}), (\ref{TTwc}), (\ref{TTsc}) and satisfy the coalgebra axioms.
\end{lemma}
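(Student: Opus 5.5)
The proof mirrors Lemma \ref{l2An}: we check the claims on generators, using that $\epsilon$ and $\copr$ are extended multiplicatively. This requires first confirming that they are well defined on the free $\pi^2$-graded algebra. On generators $\epsilon(T^i_j(u))=\delta_{i,j}$ and $\copr(T^i_j(u))=\sum_k T^k_j(u)\otimes_h T^i_k(u)$. Both are homogeneous: the sum over $k$ runs exactly over the matching 2-cells in \eqref{e-hor-composition}, so $\copr$ lands in $A\otimes_h A$. The map $\epsilon$ lands in $I_h$ because $T^i_i$ is graded by a 2-cell in $\Gamma^\bullet(i,i)$, which is a horizontal identity since $\pi$ has at most one arrow between two objects. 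By the universal property of the tensor algebra (Section \ref{sec-2.3}), these maps extend to algebra morphisms. Hence it suffices to show that they annihilate the relators \eqref{xi}, \eqref{eqn:RTTc}, \eqref{TTwc} and \eqref{TTsc}.

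For \eqref{xi} and \eqref{eqn:RTTc} the argument is the same as before. The element $\xi^\pm$ is group-like with $\epsilon=1$, so $\copr(\xi^+\xi^-)=\xi^+\xi^-\otimes_h\xi^+\xi^-=\eta(1)\otimes_h\eta(1)=\copr(\eta(1))$. For the RTT relation, $\epsilon$ replaces $T^{(1)}T^{(2)}$ by identities, and both sides reduce to the same component $\check R^{kl}_{ij}(u_1-u_2)$. For $\copr$ we apply the four-line computation from Lemma \ref{l2An}: expand, use \eqref{eqn:RTTc} in the right tensor factor to move $\check R$ to the left, then use it in the left tensor factor to move it to the right, and recollect.

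The new point is \eqref{TTwc} and \eqref{TTsc}, where the inner product $\omega$ pairs $V$ with itself rather than with $V^*$. Applying $\epsilon$ to \eqref{TTwc} gives $\sum_{k,l}\omega_{k,l}\delta_{k,i}\delta_{l,j}=\omega_{i,j}=\omega_{i,j}\epsilon(\xi^+(u))$. For $\copr$,
\[
\sum_{k,l}\omega_{k,l}\copr(T^k_i(u+\lambda))\copr(T^l_j(u))=\sum_{o,p}T^o_i(u+\lambda)T^p_j(u)\otimes_h\sum_{k,l}\omega_{k,l}T^k_o(u+\lambda)T^l_p(u).
\]
We use \eqref{TTwc} in the right factor to get $\omega_{o,p}\xi^+(u)$. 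Since $\xi^+$ is central, it factors out, and \eqref{TTwc} applied again in the left factor yields $\omega_{i,j}\xi^+\otimes_h\xi^+=\omega_{i,j}\copr(\xi^+(u))$. Relation \eqref{TTsc} is handled symmetrically: contract $\sigma$ first in the left factor, then in the right.

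The main obstacle is the bookkeeping of gradings in these contractions. We must check that the intermediate sums over $o,p$ and $k,l$ really run over composable 2-cells, so that the interchange map \eqref{e-iso} identifies $(A\otimes_hA)\otimes_v(A\otimes_hA)$ with the vertical products used. We must also check that $\xi^+\otimes_h\xi^+$ carries the same grading as $\omega_{i,j}$, namely vertical identities. Since $\omega_{k,l}$ is nonzero only for $l=k^{-1}$, the 2-cells in question have vertical sides $i,j=i^{-1}$ and form identity 2-cells, so the degrees match.

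Finally, coassociativity and counitality are checked on generators exactly as in Lemma \ref{l2An}. Because both sides of each axiom are algebra morphisms, agreement on generators implies agreement everywhere.
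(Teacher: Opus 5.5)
Your proposal is correct and takes essentially the paper's route. The paper's proof only says that the argument is the same as for Lemma~\ref{l2An}, and you carry out exactly that argument in the orthogonal setting: you check $\epsilon$ and $\copr$ on the relations, contracting with $\omega$ (respectively $\sigma$) first in one tensor factor and then in the other, and your remarks on extension from the free algebra and on matching gradings supply details the paper leaves implicit.
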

\begin{proof}
The proof is the same as in the case of the restricted quantum algebra $A_{R,\omega,\omega^*}$, which makes co-unit and coproduct well-defined algebra maps on $A_{R,\omega}$.
\end{proof}

Define the antipode $S:A_{R,\omega}\rightarrow A_{R,\omega}^{\textrm{op}}$ to be
\begin{align}\label{e-antip2}
  S(T^j_i(u))&:=\xi^-(u-\lambda)\omega_{i,-i}T_{-j}^{-i}(u-\lambda)\sigma^{-j,j},
  \\
  S(\xi^\pm(u))&:=\xi^\mp(u)\notag
\end{align}
whose components can be depicted as
\begin{center}
$S\Bigg($\begin{tikzcd}
a \arrow[r,"n"] \arrow[d, "i"']\arrow[rd,phantom,"T(u)"]
& b \arrow[d, "j"] \\
c \arrow[r, "m"']
& d 
\end{tikzcd}$\Bigg)=\displaystyle\xi^-(u-\lambda)\omega_{i,-i}$\begin{tikzcd}
d \arrow[r, "m^{-1}"] \arrow[d, "j^{-1}"']\arrow[rd,phantom,"T(v)"]
& c \arrow[d, "i^{-1}"] \\
b \arrow[r,"n^{-1}"']
& a 
\end{tikzcd}$\sigma^{-j,j}$, $v=u-\lambda$.
\end{center}
\begin{lemma}
\label{l3}
The antipode $S$ satisfies the antipode axioms and preserves the relations (\ref{eqn:RTTc}), (\ref{TTwc}), and (\ref{TTsc}).
\end{lemma}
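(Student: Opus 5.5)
I would mirror the proof of Lemma \ref{l3An}, with $T^*$ replaced by $T$ itself (indices $-i$ now referring to arrows $i^{-1}\in\tilde\pi$) and the three rotations of Definition \ref{Rr2} replaced by the single rotational invariance $\R^{*_1}(u)=\R(u)\alpha(-u)$ from the Proposition in Section \ref{sec-3.1}.

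\textbf{Antipode axioms.} First I verify (H7) on generators, using the description of $p$ and $\nabla$ from the proof of Lemma \ref{l3An}. For the first diagram,
\[
\nabla\circ p\circ(\mathrm{id}\otimes S)\circ i\circ\Delta(T^j_i(u))
=\xi^-(u-\lambda)\sum_k\omega_{k,-k}\,T^k_i(u)\,T^{-k}_{-j}(u-\lambda)\,\sigma^{-j,j}.
\]
Writing $u=(u-\lambda)+\lambda$, relation (\ref{TTwc}) at $u-\lambda$ turns this into $\xi^-(u-\lambda)\xi^+(u-\lambda)\,\omega_{i,-j}\sigma^{-j,j}$. By (\ref{xi}) and (\ref{os}) this equals $\delta_{i,j}\eta(1)=\eta\sigma'\epsilon(T^j_i(u))$. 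The other diagram is obtained in the same way from (\ref{TTsc}). For $\xi^\pm$, both diagrams reduce to (\ref{xi}), since $\xi^\pm$ is group-like with $\epsilon(\xi^\pm)=1$. Once (H7) holds on generators, the standard argument (uniqueness of convolution inverses, carried out in the graded setting) shows that $S$ is an antihomomorphism. Hence it suffices to check that $S$ preserves the defining relations on generators.

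\textbf{Preservation of (\ref{eqn:RTTc}).} Apply $S$ to both sides, reverse the order of the products, and substitute $v_i=u_i-\lambda$. The $\xi^-$ factors are central and identical on both sides, so they cancel. What remains is $\R$ contracted with $\sigma^{-l,l}\sigma^{-k,k}$ on one side and with $\omega_{l,-l}\omega_{k,-k}$ on the other. I move these contractions through $\R$ using the orthogonal analogues of the identities used in Lemma \ref{l3An}:
\[
\sigma^{-l,l}\sigma^{-k,k}\R^{m,n}_{k,l}(u)=\R^{-l,-k}_{-n,-m}(u)\sigma^{-m,m}\sigma^{-n,n},
\qquad
\R^{m,n}_{k,l}(u)\omega_{n,-n}\omega_{m,-m}=\omega_{k,-k}\omega_{l,-l}\R^{-l,-k}_{-n,-m}(u).
\]
These follow by rotating twice, i.e.\ by applying $\R^{*_1}=\alpha(-\cdot)\R$ twice, which gives a $180^\circ$ rotation with factor $\alpha(-u)\alpha(u-\lambda)$. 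After these substitutions the expression becomes (\ref{eqn:RTTc}) itself with the indices $(-n,-m;-l,-k)$ and parameters $v_1,v_2$, so it vanishes.

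\textbf{Main obstacle.} I expect the hard step to be showing that the $180^\circ$ rotation of $\R$ is exactly $\R$ with reversed negated indices, with no leftover scalar. The plan here is to compose (\ref{oRR}) with the inversion relation (\ref{inv}): this should give $\alpha(-u)\alpha(u-\lambda)=1$, or an equivalent cancellation of the leftover scalar. If a scalar function survives, it would still be harmless, because it multiplies both sides of the relation equally.

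\textbf{Preservation of (\ref{TTwc}) and (\ref{TTsc}).} Apply $S$ to (\ref{TTwc}). This produces $T(u-\lambda)\,T(u)$ contracted with $\omega,\sigma$. I contract $\omega_{k,-l}\sigma^{-k,k}=\delta_{k,l}$ and then invoke (\ref{TTsc}). The remaining factors $\xi^-(u)\xi^+(u-\lambda)$ reduce via (\ref{xi}) to $S(\xi^+(u))=\xi^-(u)$. This follows the graphical computation in Lemma \ref{l3An}, with (\ref{TTsc}) playing the role of (\ref{eqn:8}). The case of (\ref{TTsc}) is symmetric: apply $S$, contract, and use (\ref{TTwc}).
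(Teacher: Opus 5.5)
Your proposal is correct and follows the paper's own (very brief) argument. The axioms (H7) come from \eqref{TTwc} and \eqref{TTsc} exactly as in Lemma~\ref{l3An}, and preservation of \eqref{eqn:RTTc} comes from applying the rotational invariance of Definition~\ref{Rrs} twice. You are right that any leftover scalar $\alpha(-u)\alpha(u-\lambda)$ from that double rotation appears identically on both sides and cancels; for the Jimbo--Miwa--Okado weights one has $\rho(-u)\rho(u-\lambda)=1$ anyway. The relations \eqref{TTwc} and \eqref{TTsc} are then handled as in the general linear case.

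One bookkeeping slip: in the orthogonal case both antipode images carry a $\xi^-$. So the scalar to simplify after applying $S$ to \eqref{TTwc} is $\xi^-(u-\lambda)\xi^-(u)\xi^+(u-\lambda)=\xi^-(u)$, not $\xi^-(u)\xi^+(u-\lambda)$.
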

\begin{proof}
The identities (\ref{TTwc}) and (\ref{TTsc}) describe the antipode axioms (H7) $\nabla (\textrm{id}\otimes S)\copr=\eta\circ\sigma^\prime\circ\epsilon$ and $\nabla (S\otimes\textrm{id})\copr=\eta\circ\sigma^\prime\circ\epsilon$ in the same way as in the previous subsection. Thus, the map $S$ is an antihomomorphism. Moreover, using the rotational invariance of $R$-matrix, given by Definition \ref{Rrs}, one can show that the antipode respects the defining relation (\ref{eqn:RTTc}). It also preserves relations (\ref{TTwc}) and (\ref{TTsc}), with the same proof as in the case of the restricted quantum group of general linear type. 
\end{proof}
\begin{theorem}
The restricted quantum algebra $A_{R,\omega}$ equipped with $(\copr,\nabla, \epsilon,\eta,S)$ is a double groupoid graded Hopf algebra.
\end{theorem}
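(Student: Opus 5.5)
The plan mirrors the proof of the corresponding theorem for $A_{R,\omega,\omega^*}$: we assemble the axioms (H1)--(H7) from what has already been established for $A_{R,\omega}$. First, (H1) holds by construction. $A_{R,\omega}$ is defined as the quotient of the free $\Gamma^\circ$-graded tensor algebra (Section \ref{sec-2.3}) by the ideal generated by \eqref{xi}, \eqref{eqn:RTTc}, \eqref{TTwc} and \eqref{TTsc}. Each of these relations is homogeneous for the $\pi^2$-grading. For instance, a term $\omega_{k,-l}T^k_i(u+\lambda)T^{-l}_{-j}(u)$ is a vertical composite of two squares whose right column $k,l^{-1}$ is contracted to an identity arrow, which is the degree of $\omega_{i,-j}\xi^+(u)$. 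Hence the quotient is a unital algebra for the vertical monoidal structure. Working with formal Fourier coefficients as in Section \ref{ss-elliptic algebras} causes no difficulty, since each coefficient relation involves finitely many generators.

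Next, we use Lemma \ref{l2}. Defining $\copr$ and $\epsilon$ on generators and extending them multiplicatively makes (H3) and the second half of (H4) true by construction. The lemma guarantees that these extensions descend to $A_{R,\omega}$ and satisfy coassociativity and counitality, which gives (H2). The first part of (H4) is the statement $\copr(\eta(1))=\sum\eta(1)\otimes_h\eta(1)$ over factorizations of the horizontal arrow, which matches the coproduct on $I_v$. For (H5) we note that $\epsilon(\eta(1))$ is nonzero exactly on the squares $\mathrm{id}^\circ_{\mathrm{id}_a}$: by $\epsilon(\xi^\pm)$ and the analogous rule for the unit, $\epsilon$ is $\delta_{a,b}$ on squares with horizontal arrow $a\to b$. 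We must check that $\epsilon$ is compatible with the vertical multiplication on $I_h$, and this follows from $\epsilon$ being an algebra morphism into $(I_h,\nabla)$.

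Finally, we apply Lemma \ref{l3}. The antipode defined by \eqref{e-antip2} sends the component of degree $\alpha$ to degree $\iota(\alpha)$. The lemma states that $S$ preserves the defining relations and is well-defined as an antihomomorphism $A_{R,\omega}\to A_{R,\omega}^{\mathrm{op}}$. To obtain (H6) we also need bijectivity. We would show that $S^2$ is invertible on generators: $S^2(T^j_i(u))$ equals $T^j_i(u-2\lambda)$ up to the scalars $\omega\sigma$, which cancel by \eqref{os}, and up to central $\xi$ factors, which are invertible by \eqref{xi}. Consequently $S$ is an isomorphism. The axioms (H7) are exactly \eqref{TTwc} and \eqref{TTsc} after the projection $p$ and inclusion $i$, as shown in Lemma \ref{l3}.

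The main obstacle is the rotational invariance step inside Lemma \ref{l3}, namely that $S$ maps \eqref{eqn:RTTc} to a consequence of the relations. Here we would use Definition \ref{Rrs}, in the form $\R^{*_1}(u)=\R(u)f(u)$, which holds by the proposition in Section \ref{sec-3.1} (crossing symmetry implies rotational invariance). We apply it twice to obtain a $180^\circ$ rotation identity
\[
\sigma^{-l,l}\sigma^{-k,k}\R^{m,n}_{k,l}(u)=\R^{-l,-k}_{-n,-m}(u)\sigma^{-m,m}\sigma^{-n,n}.
\]
The scalar factors $f(u)f(\lambda-u)$ either cancel or are absorbed, and the inversion relation \eqref{inv} is used to normalize them. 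With this identity, the computation proceeds verbatim as in the general linear case, with $T^*$ replaced by $T$ and $\R_{V^*V^*}$ replaced by $\R$ itself. The result is the RTT relation \eqref{eqn:RTTc} at shifted spectral parameters $v_i=u_i-\lambda$.
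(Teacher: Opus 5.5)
Your proposal is correct and follows the paper's route. The paper's proof consists of invoking Lemmas \ref{l2} and \ref{l3}, and your assembly of (H1)--(H7) from them, including the bijectivity of $S$ via $S^2$ in the spirit of Theorem \ref{t-S2}, spells this out in more detail. One small slip: the scalars $\omega_{i,-i}\sigma^{i,-i}\,\omega_{-j,j}\sigma^{-j,j}$ in $S^2(T^j_i(u))$ do not cancel by \eqref{os}, which pairs $\omega_{i,-i}$ with $\sigma^{-i,i}$; they give the factor $G_{t(i)}G_{s(j)}/(G_{s(i)}G_{t(j)})$ of Theorem \ref{t-S2}, which is harmless because your invertibility argument only needs them to be nonzero.
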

\begin{proof}
Lemma \ref{l2} and Lemma \ref{l3} prove the theorem.
\end{proof}
\begin{definition}\label{def-A}
We call the $\pi^2$-graded Hopf algebra $(A_{R,\omega},\copr,\nabla, \epsilon,\eta,S)$ the \textit{restricted quantum group of orthogonal type} associated with $\R(u)$ and $\omega$.
\end{definition}
The main example is given by the $R$-matrices of the restricted $B_n^{(1)},C_n^{(1)},D_n^{(1)}$-models of Section \ref{sec-4}. The rotations are
introduced in Section \ref{4.3} and the crossing symmetry is the content of Lemma \ref{lWWR} with $\lambda$ given in Table \ref{tab}.
\begin{definition}\label{def-BCD}
  Let $\mathfrak g=B_n,C_n$ or $D_n$, $\ell\in\mathbb Z_{\geq0}$, $L=\frac12(\theta,\theta)(\ell+g)$, $\lambda=-g(\theta,\theta)/4$ as in Table \ref{tab},
  $\check R$ be the $R$-matrix of the restricted models of
 type $\mathfrak g^{(1)}$ of  Section \ref{sec-4} with $\omega$ given in \eqref{e-omega}. The $\pi^2$-graded Hopf algebra associated
  with these data is called the restricted elliptic quantum group $E_\ell(\mathfrak g^{(1)})$ of type $\mathfrak g^{(1)}$ of level $\ell$.
\end{definition}
\subsection{Automorphisms and the square of the antipode}\label{ss-auto}
We describe three classes of automorphisms of our $\pi^2$-graded Hopf algebras. The first two act trivially
on the central elements $\xi^\pm(u),\xi_2^\pm(u)$ and the square of the antipode is a composition of these.

The first class comes from the grading and applies to any $\pi^2$-graded Hopf algebra $A$.
We have an automorphism $D_\gamma$ for each morphism of groupoids $\gamma\colon \pi\to\mathbb C^\times$, where the group $\mathbb C^\times$ is regarded as a groupoid with one object. Thus $\gamma$ is a map $\pi_1\to \mathbb C^\times$ such
that $\gamma(i\circ j)=\gamma(i)\gamma(j)$ for composable arrows $i,j$ and $\gamma(e)=1$ for identity arrows $e$.
For example
for each map $g\colon \pi_0\to \mathbb C^\times $, $\gamma(i)=g(t(i))/g(s(i))$ is such a morphism.

\begin{lemma} Let $A$ be a $\pi$-graded Hopf algebra and $\gamma\colon\pi\to \mathbb C^\times$ a morphism of groupoids.
  Then the map $D_\gamma$ acting as multiplication by $\gamma(s^\bullet(\alpha))/\gamma(t^\bullet(\alpha))$ on $A_\alpha$,
  $\alpha\in \Gamma^\bullet$ (see Section \ref{ss-2.3}) is a Hopf algebra automorphism. 
\end{lemma}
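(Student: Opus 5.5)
The idea is that $D_\gamma$ is diagonal with respect to the $\Gamma$-grading, with scalar $c(\alpha):=\gamma(s^\bullet(\alpha))/\gamma(t^\bullet(\alpha))$ on $A_\alpha$. Each structure map is homogeneous, so it suffices to show that $c$ is multiplicative along every composition and inversion that the structure maps use. For a 2-cell $\alpha$ with left vertical arrow $u$, right vertical arrow $v$, top $f$ and bottom $g$ (as in \eqref{e-alpha}), we have $s^\bullet(\alpha)=u$ and $t^\bullet(\alpha)=v$. Hence $c(\alpha)=\gamma(u)/\gamma(v)$, and $D_\gamma$ is clearly invertible with inverse $D_{\gamma^{-1}}$.

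The proof consists of the following routine checks, carried out in order.

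\textbf{Product and unit.} The product is $\nabla\colon A\otimes_vA\to A$, so it only pairs $x\in A_\alpha$ with $y\in A_\beta$ when the cells are vertically composable. Then $s^\bullet(\beta\circ\alpha)=u'\circ u$ and $t^\bullet(\beta\circ\alpha)=v'\circ v$, and functoriality of $\gamma$ gives $c(\beta\circ\alpha)=c(\alpha)c(\beta)$. So $D_\gamma$ is multiplicative. The unit lives on the cells $\mathrm{id}^\circ_f$, whose vertical arrows are identities, so $c=1$ there and $D_\gamma\eta=\eta$.

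\textbf{Coproduct and counit.} The coproduct lands in $A\otimes_hA$, which is spanned by components $A_\alpha\otimes A_\beta$ with $\beta\bullet\alpha$ defined. By \eqref{e-hor-composition} these share the middle vertical arrow $v$, so $c(\alpha)c(\beta)=\frac{\gamma(u)}{\gamma(v)}\frac{\gamma(v)}{\gamma(w)}=c(\beta\bullet\alpha)$. Since $\Delta$ preserves degree, this gives $\Delta D_\gamma=(D_\gamma\otimes_h D_\gamma)\Delta$. The counit takes values in $I_h$, which is supported on the cells $\mathrm{id}^\bullet_u$. For these cells $s^\bullet=t^\bullet=u$, so $c=1$ and $\epsilon D_\gamma=\epsilon$.

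\textbf{Antipode.} $S$ maps $A_\alpha$ to $A_{\iota(\alpha)}$. By \eqref{e-double-inverse}, $\iota(\alpha)$ has left vertical arrow $v^{-1}$ and right vertical arrow $u^{-1}$. Therefore $c(\iota(\alpha))=\gamma(v)^{-1}/\gamma(u)^{-1}=c(\alpha)$, and so $S D_\gamma=D_\gamma S$.

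This last step is where the choice of the double inverse matters, and I expect it to be the only point needing care. One has to check that the vertical arrows of $\iota(\alpha)$ are indeed the inverses of those of $\alpha$, taken in swapped order. Both the swap and the inversion flip the ratio, and the two effects cancel; if either were missing, $c$ would be inverted rather than preserved.

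Finally, compatibility with $\sigma,\sigma'$ in (H5) and (H7) is automatic, because these maps are supported on cells with all four arrows identities.
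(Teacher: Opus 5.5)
Your proposal is correct and follows essentially the same route as the paper: multiplicativity of $\gamma(s^\bullet(\alpha))/\gamma(t^\bullet(\alpha))$ under vertical composition gives the product, cancellation of the shared middle vertical arrow gives the coproduct, the scalar is $1$ on the identity cells supporting $\eta$ and $\epsilon$, and $\iota$ swaps and inverts the vertical arrows, which handles the antipode. Your extra remarks on invertibility via $D_{\gamma^{-1}}$ and on $\sigma,\sigma'$ are harmless additions that the paper leaves implicit.
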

\begin{proof}
  The product is a map $A_\alpha\otimes_v A_\beta\to A_{\beta\circ\alpha}$ and $s^\bullet(\beta\circ \alpha)=
  s^\bullet(\beta)\circ s^\bullet(\alpha)$. Therefore $\gamma(s^\bullet(\beta\circ \alpha)=
  \gamma(s^\bullet(\beta))\gamma (s^\bullet(\alpha))$. The same holds for the target, so that
  $D_\gamma(a\cdot b)=D_\gamma(a)\cdot D_\gamma(b)$ for any $a\in A_\alpha$, $b\in A_\beta$. The
  coproduct is a map $\Delta\colon A_\alpha\to\oplus_{\beta_2\bullet\beta_1=\alpha} A_{\beta_1}\otimes_h A_{\beta_2}$.
  Since for each term in the direct sum
  $s^\bullet(\beta_1))=s^\bullet(\alpha)$, $t^\bullet(\beta_2)=t^\bullet(\alpha)$ and $t^\bullet(\beta_1)=s^\bullet(\beta_2)$,
  the middle term cancels and $\Delta\circ D_\gamma(a)=D_\gamma\otimes D_\gamma\circ\Delta(a)$ for any $a\in A_\alpha$.
  Clearly $D_\gamma$ preserves the unit and the co-unit. The map $D_\gamma$ also commutes with the
  antipode since $S$ maps $A_\alpha$ to $A_{\iota(\alpha)}$
  and $s^\bullet(\iota(\alpha))=(t^\bullet)^{-1}(\alpha)$, $t^\bullet(\iota(\alpha))=(s^{\bullet})^{-1}(\alpha)$,
  see \eqref{e-double-inverse}.
\end{proof}

  In particular, if $\gamma(i)=g(t(i))/g(s(i))$ for some function $g\colon\pi_0\to  \mathbb C^\times$, then
\begin{equation}\label{e-abcd}
    D_\gamma(x)=\frac{g(c)g(b)}{g(a)g(d)}x,\text{ for $x\in A_\alpha$, where }
      \alpha=\begin{tikzcd}
a \arrow[r,"l"] \arrow[d, "i"']\arrow[rd,phantom]
& b \arrow[d, "j"] \\
c \arrow[r,"k"']
& d 
\end{tikzcd},
\end{equation}
is an automorphism of the Hopf algebra.

The second type of automorphisms is defined for
Hopf algebras given by relations of the form \eqref{xiA}--\eqref{eqn:8} and \eqref{xi}--\eqref{TTsc}.
\begin{lemma}
  There is a unique automorphism $\psi$ of the Hopf algebra $A_{R,\omega,\omega^*}$ or $A_{R,\omega}$
  acting as the identity on $\xi^\pm(u)$, $\xi_2^\pm(u)$ and such that
  \[
    \psi(T^j_i(u))=\xi_2^+(u-\lambda)T^j_i(u-2\lambda),\quad
    \psi((T^*)^{-i}_{-j}(u))=\xi_2^-(u-2\lambda)(T^*)^{-i}_{-j}(u-2\lambda),
  \]
  where the second formula applies to the general linear case and, in the orthogonal case,
  \[
    \xi^+_2(u)=\xi^-(u-\lambda)\xi^+(u).
  \]
\end{lemma}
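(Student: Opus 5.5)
The plan is to define $\psi$ on generators, check that it preserves every defining relation, and then verify compatibility with $\Delta$, $\epsilon$ and $S$. Since $A_{R,\omega,\omega^*}$ and $A_{R,\omega}$ are presented by generators and relations, uniqueness is immediate: an algebra map is determined by its values on $T(u)$, $T^*(u)$ and the central elements $\xi^\pm$, $\xi_2^\pm$. For existence, extend $\psi$ multiplicatively from the free $\pi^2$-graded algebra. This is legitimate because $\psi$ preserves the $\pi^2$-degree: the $\xi$'s sit in degree $\mathrm{id}^\circ_f$, and multiplying by them does not change the square. It then remains to check that $\psi$ maps the defining ideal into itself.

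The first step is to check the relations.
\begin{itemize}
\item The $RTT$ relations \eqref{eqn:1}, \eqref{eqn:3}, \eqref{eqn:4} and \eqref{eqn:RTTc} involve only the difference $u_1-u_2$, so the shift $u\mapsto u-2\lambda$ preserves them. The central prefactors pull out: both sides acquire the same product $\xi_2^\pm(u_1-\lambda)\xi_2^\pm(u_2-\lambda)$, or the analogous mixed product for \eqref{eqn:4}.
\item For \eqref{eqn:5}, applying $\psi$ to the left side gives $\xi_2^+(u)\xi_2^-(u-2\lambda)\,\omega^{(23)}T^{(1)}(u+\lambda-2\lambda)T^{*(2)}(u-2\lambda)$. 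By \eqref{eqn:5} at argument $u-2\lambda$ this equals $\xi_2^+(u)\xi_2^-(u-2\lambda)\xi_2^+(u-2\lambda)\,\omega^{(12)}$, and \eqref{xiA} reduces it to $\xi_2^+(u)\omega^{(12)}=\psi(\text{rhs})$.
\item For \eqref{eqn:6}, the prefactor is $\xi_2^-(u-\lambda)\xi_2^+(u-\lambda)=\eta(1)$, which is consistent with the right side having no $\xi$.
\item Relations \eqref{eqn:7} and \eqref{eqn:8} are handled the same way.
\end{itemize}
In the orthogonal case, set $\xi_2^+(u)=\xi^-(u-\lambda)\xi^+(u)$. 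Then $\psi(\text{lhs of }\eqref{TTwc})=\xi_2^+(u)\xi_2^+(u-\lambda)\xi^+(u-2\lambda)\omega^{(12)}$, and the product of $\xi$'s telescopes via \eqref{xi} to $\xi^+(u)$. The same telescoping handles \eqref{TTsc}. Pinning down the exact shifts so that these products collapse is the bookkeeping I expect to be most error-prone. If an argument is off by $\lambda$, I would recheck the definition against the fact that $\psi$ should equal $S^2$ up to $D_\gamma$, and compute $S^2(T)$ directly from \eqref{e-antip} to guide the correct normalization.

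The second step is the Hopf structure. The $\xi$'s are grouplike and $\epsilon(\xi)=1$, so $\Delta\psi(T^j_i(u))=\sum_k\psi(T^k_i(u))\otimes_h\psi(T^j_k(u))$ holds: the prefactor $\xi_2^+\otimes_h\xi_2^+=\Delta(\xi_2^+)$ multiplies the coproduct of $T(u-2\lambda)$. The counit is preserved since $\epsilon\psi(T^j_i)=\delta_{ij}$.

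For the antipode, compute $S\psi(T^j_i(u))=\xi_2^-(u-\lambda)\,S(T^j_i(u-2\lambda))$ and compare it with $\psi S(T^j_i(u))$ using \eqref{e-antip}. Both are multiples of $(T^*)^{-i}_{-j}(u-3\lambda)$, and I check that the central prefactors agree using \eqref{xiA}. This is the step I expect to be the main obstacle, because the prefactors from $\psi$ and from $S$ interleave at several shifted arguments. Finally, invertibility of $\psi$ follows because the analogous map with $\lambda$ replaced by $-\lambda$ and inverse central prefactors is a two-sided inverse on generators.
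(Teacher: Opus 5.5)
Your core argument is the same as the paper's, and it is correct. You define $\psi$ on generators, note that it preserves the $\pi^2$-degree, and check that it maps the defining ideal into itself. The only nontrivial relations are the crossing relations \eqref{eqn:5}--\eqref{eqn:8} (resp.\ \eqref{TTwc}, \eqref{TTsc}), where the central prefactors telescope via \eqref{xiA} (resp.\ \eqref{xi}). The paper writes out only \eqref{eqn:5}, exactly as you do. Your shifts for \eqref{eqn:6}--\eqref{eqn:8}, and for the orthogonal case with $\xi_2^+(u)=\xi^-(u-\lambda)\xi^+(u)$, are right.

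Your extra checks of $\copr$, $\epsilon$ and $S$ go beyond the paper and are also right. For $S$, both $S\psi(T^j_i(u))$ and $\psi S(T^j_i(u))$ equal $\xi_2^-(u-\lambda)\xi_2^-(u-3\lambda)\omega_{i,-i}(T^*)^{-i}_{-j}(u-3\lambda)\sigma^{-j,j}$ identically, so \eqref{xiA} is not even needed there.

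The one step that fails as written is invertibility.
\begin{itemize}
\item The inverse of a map $X(u)\mapsto c(u)X(u-2\lambda)$ with $c$ central is $X(u)\mapsto c(u+2\lambda)^{-1}X(u+2\lambda)$. It is not ``the same map with $\lambda\to-\lambda$ and the prefactor inverted''.
\item For $T$ the two recipes agree only because the prefactor's argument $u-\lambda$ is the midpoint of the shift.
\item For $T^*$ your recipe gives $(T^*)^{-i}_{-j}(u)\mapsto\xi_2^+(u+2\lambda)(T^*)^{-i}_{-j}(u+2\lambda)$. Composing with $\psi$ then yields $\xi_2^-(u-2\lambda)\xi_2^+(u)(T^*)^{-i}_{-j}(u)$, which is not the identity.
\item The correct candidate is $\psi^{-1}(T^j_i(u))=\xi_2^-(u+\lambda)T^j_i(u+2\lambda)$ and $\psi^{-1}((T^*)^{-i}_{-j}(u))=\xi_2^+(u)(T^*)^{-i}_{-j}(u+2\lambda)$. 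In the orthogonal case the prefactor is $\xi_2^+(u+\lambda)^{-1}=\xi^+(u)\xi^-(u+\lambda)$.
\end{itemize}
Being a two-sided inverse on generators is also not enough on its own. The compositions are the identity only modulo \eqref{xiA}, not in the free algebra, so you must check that $\psi^{-1}$ respects the defining relations. This is the same telescoping computation. For example, for \eqref{eqn:5} the prefactor becomes $\xi_2^-(u+2\lambda)\xi_2^+(u)\xi_2^+(u+2\lambda)=\xi_2^+(u)$. So the gap closes easily, but the inverse as you stated it is wrong.
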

\begin{proof}
  We need to check that $\psi$ respect the relations. The non-trivial checks are for the relations
  \eqref{eqn:5}--\eqref{eqn:8} in the general linear case and \eqref{TTwc}, \eqref{TTsc} in the
  orthogonal case.
  For example in the relation \eqref{eqn:5}
  \[
    \omega^{(23)}T^{(1)}(u+\lambda)T^{*(2)}(u)=\omega^{(12)}\xi_2^+(u),
  \]
  the left-hand side is sent by $\psi$ to
  \begin{align*}
    \xi_2^+(u)\xi_2^-(u-2\lambda)\omega^{(23)}T^{(1)}(u-\lambda)T^{*(2)}(u-2\lambda)
    &= \xi_2^+(u)\xi_2^-(u-2\lambda)\omega^{(12)}\xi_2^+(u-2\lambda)
      \\
    &=\xi_2^+(u)\omega^{(12)}.
  \end{align*}

  The other cases are similar.

\end{proof}
\begin{theorem}\label{t-S2} Let $\mathfrak g$ be of type $A,B,C,D$.
  For the restricted elliptic quantum algebra $E_\ell(\mathfrak g^{(1)})$, the square of the antipode is
  \[
    S^2=D_\gamma\circ \psi,
  \]
  where $\gamma$ is as in \eqref{e-abcd} with $g(a)=G_a$, see \eqref{e-Ga}.
\end{theorem}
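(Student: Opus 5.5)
Since $S$ is an antihomomorphism by Lemma \ref{l3An} and Lemma \ref{l3}, $S^2$ is an algebra homomorphism. The maps $D_\gamma$ and $\psi$ are Hopf algebra automorphisms by the two preceding lemmas. It therefore suffices to check $S^2=D_\gamma\circ\psi$ on the generators $T^j_i(u)$, $(T^*)^{-j}_{-i}(u)$, $\xi_2^\pm(u)$ (respectively $T^j_i(u)$, $\xi^\pm(u)$ in the orthogonal case) and on $\eta(1)$.

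On the central elements and the unit both sides act trivially. Indeed $S^2(\xi^\pm)=S(\xi^\mp)=\xi^\pm$ and $S^2(\eta(1))=\eta(1)$. The map $D_\gamma$ is trivial on components $A_{\mathrm{id}^\circ_f}$, since there $a=c$ and $b=d$ in \eqref{e-abcd}, and $\psi$ fixes $\xi^\pm,\xi_2^\pm$ by definition.

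For the general linear case, I apply \eqref{e-antip} twice, using that $S$ reverses products and fixes central factors up to $\xi^\pm\mapsto\xi^\mp$:
\[
S^2(T^j_i(u))=\xi_2^+(u-\lambda)\,\omega_{i,-i}\,\omega^*_{-j,j}\,T^j_i(u-2\lambda)\,\sigma^{*i,-i}\sigma^{-j,j}.
\]
The key step is to evaluate the scalar $\omega_{i,-i}\sigma^{*i,-i}\,\omega^*_{-j,j}\sigma^{-j,j}$ using \eqref{omegastar}, \eqref{sigmastar} and the normalisations $\omega_{i,-i}\sigma^{-i,i}=1$. From these,
\[
\omega_{i,-i}\sigma^{*i,-i}=\frac{G(t(i))}{G(s(i))},\qquad \omega^*_{-j,j}\sigma^{-j,j}=\frac{G(s(j))}{G(t(j))},
\]
up to the flip conventions. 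For a square with corners $a,b,c,d$ as in \eqref{e-abcd}, where $i\colon a\to c$ and $j\colon b\to d$, this gives $\frac{G_cG_b}{G_aG_d}$, which is exactly the $D_\gamma$ factor with $g=G$. Together with $\psi(T^j_i(u))=\xi_2^+(u-\lambda)T^j_i(u-2\lambda)$ this yields $S^2(T)=D_\gamma\psi(T)$. For $(T^*)^{-j}_{-i}(u)$ the computation is analogous: two applications give $\xi_2^-(u-2\lambda)\,(T^*)^{-j}_{-i}(u-2\lambda)$ multiplied by the inverse-type ratio of $G$'s. This ratio matches $D_\gamma$ on the doubly inverted square, since the vertical arrows are now $i^{-1},j^{-1}$.

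In the orthogonal case $T$ plays both roles. Then
\[
S^2(T^j_i(u))=S\bigl(\xi^-(u-\lambda)\omega_{i,-i}T^{-i}_{-j}(u-\lambda)\sigma^{-j,j}\bigr)=\xi^+(u-\lambda)\xi^-(u-2\lambda)\,\omega_{i,-i}\omega_{-j,j}\,T^j_i(u-2\lambda)\,\sigma^{i,-i}\sigma^{-j,j}.
\]
Here I use $S(\xi^-(u-\lambda))=\xi^+(u-\lambda)$, which matches $\xi_2^+(u-\lambda)=\xi^-(u-2\lambda)\xi^+(u-\lambda)$. The scalar $\omega_{i,-i}\sigma^{i,-i}\cdot\omega_{-j,j}\sigma^{-j,j}$ is computed from \eqref{e-omega}. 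The factor $\omega_{i,-i}$ equals $\sqrt{\sigma G_c/G_a}$. Reading off $\sigma^{i,-i}$ with the index conventions of \eqref{e-omega}, I expect the product $\omega_{i,-i}\sigma^{i,-i}$ to be $\sigma G_c/G_a$, and similarly $\omega_{-j,j}\sigma^{-j,j}$ to be $\sigma^{-1}G_b/G_d$. The signs $\sigma$ cancel in the $C_n$ case, leaving $\frac{G_bG_c}{G_aG_d}$ once more.

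The main obstacle is purely bookkeeping. I must track exactly which source and target enter each $\omega,\sigma,\omega^*,\sigma^*$ coefficient, respect the order reversal under $S$, and keep the sign and square-root branch conventions of \eqref{e-omega}, \eqref{e-omega*} straight, in particular the sign $\sigma=-1$ for $C_n$. I would fix a single square $\alpha$ with labelled corners, draw $S(\alpha)$ and $S^2(\alpha)=\alpha$ as in the graphical calculus, and read off each coefficient from its picture before multiplying.
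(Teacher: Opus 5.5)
Your proposal is correct and follows the paper's proof. Like the paper, you reduce to generators and apply \eqref{e-antip} or \eqref{e-antip2} twice; the resulting scalar $\omega_{i,-i}\sigma^{*i,-i}\omega^*_{-j,j}\sigma^{-j,j}$ (resp.\ $\omega_{i,-i}\sigma^{i,-i}\omega_{-j,j}\sigma^{-j,j}$) equals $G_bG_c/(G_aG_d)$, which reproduces the paper's formula $S^2(T^j_i(u))=\xi_2^+(u-\lambda)\frac{G_{s(j)}G_{t(i)}}{G_{t(j)}G_{s(i)}}T^j_i(u-2\lambda)$. Your bookkeeping is more explicit than the paper's, which only states the result; in particular, in each product both square roots are the same expression at the same arrow, so no branch ambiguity arises and the sign $\sigma$ cancels in the $C_n$ case.
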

To prove this it is sufficient to apply $S^2$ on the generators by using the definitions \eqref{e-antip},
\eqref{e-antip2}. One finds
\[
  S^2(T^j_i(u))=\xi_2^+(u-\lambda)\frac{G_{s(j)}G_{t(i)}}{G_{t(j)}G_{s(i)}}T^j_i(u-2\lambda),
\]
and, in the general linear case,
\[
  S^2((T^*)^{-i}_{-j}(u))=\xi_2^-(u-2\lambda)\frac{G_{s(j)}G_{t(i)}}{G_{t(j)}G_{s(i)}}(T^*)^{-i}_{-j}(u-2\lambda).
\]
Finally, we have (as in the case of Yangians and quantum affine algebras)
the shift by $z\in\mathbb C$ of the spectral parameter for Hopf algebras $A_{R,\omega,\omega^*}$, $A_{R,\omega}$.

Since the structure constants are functions of the difference $u_1-u_2$, the maps on generators
\[
  \tau_z(T^j_i(u))=T^j_i(u-z),\quad \tau_z((T^*)^{-i}_{-j}(u))=(T^*)^{-i}_{-j}(u-z)\quad \tau_z(\xi_2^{\pm}(u))=\xi_2^{\pm}(u-z)
\]
in the general linear case and
\[
  \tau_z(T^j_i(u))=T^j_i(u-z),\quad \tau_z((T^*)^{-i}_{-j}(u))=(T^*)^{-i}_{-j}(u-z)\quad \tau_z(\xi^\pm(u))=\xi^\pm(u-z)
\]
in the orthogonal case extend to automorphisms of the Hopf algebra.

\section{Representations of the restricted quantum groups}
We consider the setting described in the Section \ref{4.2}. The $\pi$-graded vector spaces $V^{\textrm{RSOS}}$ and $(V^{*})^{\textrm{RSOS}}$ together with the restrictions of $R$-matrices and their rotations, given in Sections \ref{4.1} and \ref{4.3}, on these spaces provide the main example of the restricted quantum groups $E_\ell(\mathfrak g^{(1)})$ associated with Lie algebras $\mathfrak{g}=\mathfrak{sl}_n$, $\mathfrak{so}_n$ and $\mathfrak{sp}_{2n}$, see Definitions \ref{def-A}, \ref{def-BCD}.  
\subsection{Vector representations}
\begin{theorem}
  Let $\ell\in\mathbb Z_{\geq0}$, $n\in\mathbb Z_{\geq2}$. The quantum group
  $E_\ell(A_{n-1})$  has a
  representation $V=(V^{\textrm{RSOS}},\sigma_V)$ on the $\pi$-graded vector space $V^{\textrm{RSOS}}$  given by
\begin{align*}
   &\sigma_V (T_i^j(u))=(\R_{VV})_i^j(u),\quad\sigma_V((T^*)_{-i}^{-j}(u))= (\R_{V^*V})_{-i}^{-j}(u),\\
   &\sigma_V(\xi_2^\pm(u))=\rho_2(u)^{\pm1}, 
\end{align*}
with $\R_{VV}(u)$, $\R_{V^*V}(u)$ and $\rho_2(u)$ given by (\ref{RAn}), (\ref{rstar}) and \eqref{e-rho2},
and the operators $(\R_{VV})_i^j(u)$ and $(\R_{V^*V})_{-i}^{-j}(u)$ defined as
\begin{align*}
(\R_{VV})_i^j(u)e_k=&\sum_{\substack{l,\\ \varepsilon_i+\varepsilon_k=\varepsilon_l+\varepsilon_j}}(\R_{VV})_{i,k}^{l,j}(u)e_l,\\
(\R_{V^*V})_{-i}^{-j}(u)e_k=&\sum_{\substack{l,\\ -\varepsilon_{i}+\varepsilon_k=\varepsilon_l-\varepsilon_j}}(\R_{V^*V})_{-i,k}^{l,-j}(u)e_l.
\end{align*}
\end{theorem}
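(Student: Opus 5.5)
Since $E_\ell(A_{n-1})=A_{R,\omega,\omega^*}$ is defined by generators and relations as a $\pi^2$-graded unital algebra (Section \ref{sec-2.3}), it suffices to check three things. First, the assignment $\sigma_V$ respects the grading, so it is a map of $\Gamma^\circ$-graded algebras from the free algebra on the generators to $\operatorname{End}(V^{\mathrm{RSOS}})$ in the sense of Example \ref{example1}. Second, the images are well defined on the restricted space. Third, the images satisfy \eqref{xiA}--\eqref{eqn:8}.

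For the grading, $(\R_{VV})_{i,k}^{l,j}(u)$ maps $V_i\otimes V_k\to V_l\otimes V_j$. Viewed as an operator $e_k\mapsto e_l$ with the vertical arrows $i,j$ fixed, it is exactly a component labeled by the commutative square with vertical sides $i,j$ and horizontal sides $l$ (top) and $k$ (bottom). This is the degree of $T_i^j(u)$. The same holds for $(\R_{V^*V})^{l,-j}_{-i,k}$ and $(T^*)^{-j}_{-i}$. Well-definedness and finiteness on $V^{\mathrm{RSOS}}$ follow from Lemma \ref{lYB} and Lemma \ref{l513}. The scalars $\rho_2(u)^{\pm1}$ are central and placed in the unit degree, so \eqref{xiA} is immediate. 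The Fourier expansion requirement holds because all coefficients are theta-function ratios, as noted in Section \ref{ss-elliptic algebras}.

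The core step is to identify each defining relation, after applying $\sigma_V$ and acting on a third tensor factor $V$ (the representation space), with a known identity. Writing $T(u)\mapsto \R^{(\cdot 3)}_{VV}(u)$ in leg notation, the $RTT$ relation \eqref{eqn:1} becomes the Yang--Baxter equation \eqref{e-YB} for $\R_{VV}$ on $V\otimes V\otimes V$, which holds by Lemma \ref{lYB}. Relations \eqref{eqn:3} and \eqref{eqn:4} become, respectively, the second and first identities of Lemma \ref{l513}, i.e. \eqref{RA2} and \eqref{RA1}, with spectral parameters matched by $u_1,u_2$. Relations \eqref{eqn:5}--\eqref{eqn:8} become precisely the four crossing identities of Lemma \ref{wttan}. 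The factor $\rho_2(u)$ appears exactly where $\xi_2^+(u)$ sits, and the factor $1$ appears where no $\xi$ appears, which explains the choice $\sigma_V(\xi_2^+)=\rho_2$.

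The main obstacle is bookkeeping. I must verify that the leg conventions agree: the order of factors in $T^{(1)}(u_1)T^{(2)}(u_2)$ (vertical composition in $A$ is composition of operators in the opposite order, by Example \ref{example1}) and the direction of the squares must translate into the YBE with the same, not reversed, spectral arguments. I must also check that the dynamical shifts agree, since $T_i^j(u,a)$ is evaluated at $a=s(i)$. I would do this by comparing the graphical forms of \eqref{eqn:1}, \eqref{eqn:5}, \eqref{eqn:7} drawn after Definition \ref{def-gltype} with the star-triangle picture \eqref{eqn:str} and the pictures in the proof of Lemma \ref{wttan}, stacking squares face by face. If an order reversal appears, I would use the inversion relation of Lemma \ref{lYB} and Corollary \ref{Rvvstar} to move factors across. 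Finally, the unit maps to the identity, so the result is a representation.
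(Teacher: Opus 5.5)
Your proposal is correct and follows the paper's own argument. The paper's proof likewise notes that under $\sigma_V$ the relations \eqref{eqn:1}, \eqref{eqn:3}--\eqref{eqn:4} and \eqref{eqn:5}--\eqref{eqn:8} become the Yang--Baxter equation of Lemma \ref{lYB}, the identities \eqref{RA2} and \eqref{RA1} of Lemma \ref{l513}, and the four crossing identities of Lemma \ref{wttan} (together with Corollary \ref{Rvvstar}), with $\xi_2^+$ matched to $\rho_2$ exactly as you do; the leg-convention bookkeeping you flag is left implicit there too.
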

\begin{proof}
This is the result of Lemmas \ref{lYB}, \ref{l513}, and \ref{wttan}, \ref{Rvvstar}. 
\end{proof}
\begin{theorem} Let $\ell\in\mathbb Z_{\geq0}$.
The $R$-matrix operators $\R_i^j(u)$ defined as $\R_i^j(u)e_k=\sum_l\R_{i,k}^{l,j}(u)e_l$, where $ \R(u)$ is given in (\ref{eqn:RM}), together with $\rho(u)$, given in \eqref{e-rho}, define a representation $V=(V^{\textrm{RSOS}},\sigma_V)$ of the restricted quantum group $E_\ell(\mathfrak{g}^{(1)})$ on the $\pi$-graded vector space $V^{\textrm{RSOS}}$ for $\mathfrak{g}=B_n$, $C_n$ and $D_n$:
\begin{align*}
    \sigma_V(T_i^j(u))=\R_i^j(u),\qquad\sigma_V(\xi^{\pm}(u))=\rho(u)^{\pm 1}.
\end{align*}
\end{theorem}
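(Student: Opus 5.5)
The plan is to verify that the assignment $\sigma_V$ respects each defining relation of $A_{R,\omega}$ in Definition \ref{def2}. Since $A_{R,\omega}$ is the quotient of the free $\pi^2$-graded algebra on the generators by these relations, this shows that $\sigma_V$ extends to a morphism of $\Gamma^\circ$-graded unital algebras $A_{R,\omega}\to\operatorname{End}(V^{\mathrm{RSOS}})$. This is exactly the orthogonal-type analogue of the preceding theorem for $E_\ell(A_{n-1})$, and I would follow the same route.

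\textbf{Grading.} First I check that $\sigma_V$ is compatible with the grading. The generator $T_i^j(u)$ sits in a component $\alpha$ with vertical arrows $i,j$. Its image $\R_i^j(u)$ maps $V_k$ (the bottom horizontal arrow) to $V_l$ (the top horizontal arrow), because $\R_{i,k}^{l,j}(u)$ is graded by the commutative square with sides $i,k,l,j$. So it lies in $\operatorname{Hom}(V_{t^\circ(\alpha)},V_{s^\circ(\alpha)})$, as required in Example \ref{example1}. Lemma \ref{lYB} guarantees that these restricted matrix elements are finite and well defined on $V^{\mathrm{RSOS}}$. The central elements are sent to scalars, so they are automatically central, and relation \eqref{xi} holds trivially since $\rho(u)\rho(u)^{-1}=1$.

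\textbf{The $RTT$ relation.} Next I treat \eqref{eqn:RTTc}. Evaluated on a basis vector of $V\otimes V$ in the third factor, and with the matrix elements of $T$ replaced by those of $\R$, both sides become the components of products of three $R$-matrices acting on $V\otimes V\otimes V$. The identity then reduces to the Yang--Baxter equation \eqref{e-YB} for $\R$ on $V^{\mathrm{RSOS}}$, which holds by Lemma \ref{lYB}. The main bookkeeping point is matching the placement of spectral parameters: the representation factor carries $u_1,u_2$, and the auxiliary $R$ carries $u_1-u_2$. I would do this by drawing the graphical form of \eqref{eqn:RTTc} and identifying it with the star-triangle picture \eqref{eqn:str}. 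Possibly the inversion relation from Lemma \ref{lYB} is needed to bring the equation into exactly the form \eqref{e-YB}, i.e.\ writing one factor as $\R(-u)^{-1}$.

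\textbf{Crossing relations.} Finally I treat \eqref{TTwc} and \eqref{TTsc}. Under $\sigma_V$ they become
\[
\omega^{(23)}\R^{(12)}(u+\lambda)\R^{(23)}(u)=\rho(u)\,\omega^{(12)}\otimes\mathbb 1
\quad\text{and}\quad
\R^{(12)}(u)\R^{(23)}(u+\lambda)\sigma^{(12)}=\rho(u)\,\mathbb 1\otimes\sigma,
\]
after reindexing tensor factors so that the representation space is the extra factor. These are precisely the crossing-symmetry identities of Lemma \ref{lWWR} on the restricted space. The step I expect to be the main obstacle is this matching: one must confirm that the factor ordering and the position of $\omega,\sigma$ agree, rather than giving the variant with $\R^{(12)}$ and $\R^{(23)}$ interchanged.

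If a mismatch appears, I would derive the needed variant from Lemma \ref{lWWR} in two steps. First, apply the rotational invariance $\R^{*_1}(u)=\alpha(-u)\R(u)$ proved in the Proposition of Section \ref{sec-3.1}. Second, use the inversion relation to move factors across. Since all the identities involved are verified on the restricted space, in particular the vanishing of terms leaving $\pi$ and of the $e_0$ components with $a_n=\tfrac12$ already handled in Lemma \ref{lWWR}, no further restriction argument should be needed.
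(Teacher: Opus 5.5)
Your proposal is correct and is essentially the paper's argument: the paper's proof just notes that under $\sigma_V$ the $RTT$ relation \eqref{eqn:RTTc} is the Yang--Baxter (star-triangle) identity of Lemma \ref{lYB}, while \eqref{TTwc} and \eqref{TTsc} with $\xi^{+}(u)\mapsto\rho(u)$ are exactly the crossing identities of Lemma \ref{lWWR}. The matching you worry about works out directly: the graphical form of \eqref{eqn:RTTc} under $\sigma_V$ is literally \eqref{eqn:str}, and the picture in the proof of Lemma \ref{lWWR} is literally \eqref{TTwc}. So neither the inversion relation nor your rotational-invariance fallback is needed.
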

\begin{proof}
The statement follows directly from Lemmas
\ref{lYB} and \ref{lWWR}.
\end{proof}
Let us call $V$ the vector representation of the restricted quantum group $E_\ell(\mathfrak g^{(1)})$ for $\mathfrak g$ of type $A,B,C,D$.
\begin{prop}
  In the case of $B_n^{(1)},D_n^{(1)}$ the vector representation representation splits as a direct
  sum of two submodules
  \[
    V=V^+\oplus V^-
  \]
  where $V^+$ is the direct sum of the homogeneous components of $V$
  of degree $(a,\epsilon_i)$ with $a\in\mathbb Z^n$ and
  $V^-$ is the direct sum of the homogeneous components of $V$
  of degree $(a,\epsilon_i)$ with $a\in(\frac12+\mathbb Z)^n$.    
\end{prop}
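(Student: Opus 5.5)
The plan is to show that the action $\sigma_V$ preserves each of the two subspaces. The groupoid $\pi$ itself splits: in the $B_n$ and $D_n$ cases the weight lattice $P$ is the disjoint union $\mathbb Z^n\sqcup(\frac12+\mathbb Z)^n$. Every weight $\varepsilon_i$ of the vector representation, including $\varepsilon_0=0$, lies in $\mathbb Z^n$. So any arrow $(a,\varepsilon_i)$ with $V_{(a,\varepsilon_i)}\neq0$ has source and target in the same coset. Hence $P^{\ell+g}_{++}=\pi_0^+\sqcup\pi_0^-$, and every nonzero component of $V$ is graded by an arrow inside the full subgroupoid on $\pi_0^+$ or on $\pi_0^-$. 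This is exactly the decomposition $V=V^+\oplus V^-$ as $\pi$-graded vector spaces.

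Next I check invariance under the generators. By the preceding theorem, the algebra $E_\ell(\mathfrak g^{(1)})$ is generated by $\xi^\pm(u)$ and $T_i^j(u)$. The $\xi^\pm(u)$ act by the scalars $\rho(u)^{\pm1}$, so they preserve every graded subspace. The generator $T_i^j(u)$ acts by $\R_i^j(u)$, which sends $e_k\in V_{(b,\varepsilon_k)}$ to a combination of vectors $e_l$ in components $V_{(b',\varepsilon_l)}$. These components form a commutative square $(a,\varepsilon_i),(a,\varepsilon_l)$ / $(a+\varepsilon_i,\varepsilon_k),(a+\varepsilon_l,\varepsilon_j)$ with corner $a$. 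In such a square all four corners differ from $a$ by elements of $\mathbb Z^n$, so they all lie in the same coset as $a$. Therefore $\R_i^j(u)$ maps $V^\pm$ to $V^\pm$. Equivalently, the representation factors through the $\pi^2$-grading: a component $A_\alpha$ acts nontrivially only between components graded by the top and bottom arrows of $\alpha$, and all objects of a commutative square built from vector weights lie in one coset.

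It remains to note that $V^\pm$ are then subrepresentations. They are preserved by the generators, hence by the whole algebra, since $\sigma_V$ is an algebra morphism and products of generators act by compositions. Their direct sum is $V$, so the splitting is a decomposition of modules, with both morphisms being inclusions of graded pieces.

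The only delicate point is to make sure that no weight in the vector representation has half-integer entries. In the $B_n$ case the extra weight $\varepsilon_0=0$, together with the convention $a_0=-\frac12$, might look like an exception, but $a_0$ is only a parameter in the Boltzmann weights and not a shift, so it causes no problem. In the $C_n$ case the lattice is $\mathbb Z^n$ and no splitting occurs, which is why the statement is restricted to $B_n^{(1)}$ and $D_n^{(1)}$.
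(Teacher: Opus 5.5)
Your proof is correct and follows the paper's argument. The paper likewise notes that the weights of the vector representation span the index-two sublattice $P_V=\mathbb Z^n$ of $P$, so the objects of $\pi$ split into the integer and half-integer $P_V$-orbits, and the generators acting nontrivially are labeled by commutative squares lying in one of the full subgroupoids $\pi^\pm$, hence preserve $V^\pm$. Your remarks that $\xi^\pm(u)$ acts by scalars and that $\varepsilon_0$ in type $B_n$ causes no exception make explicit points the paper leaves implicit.
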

\begin{proof}
  Let $P_V\subset P$ be the sublattice spanned by the weights of the
  vector representation. It is generated by $\epsilon_1,\dots,\epsilon_n$ for
  $A_{n-1},B_n,C_n,D_n$. For $A_n,C_n$,
  $P_V=P$. For $B_n$ and $D_n$, $P_V=\mathbb Z^n$ and
  $P=\{a\in\mathbb (\frac 12 Z)^n\,|\, a_i-a_j\in\mathbb Z\}$, see
  Section \ref{4.2}.  In these cases $P_V$ is the kernel of the map 
  $P\mapsto \mathbb Z/2\mathbb Z$, $a\mapsto 2a_1\operatorname{mod} 2$ and
  is thus a subgroup of index two. The set of objects of $\pi$ is the union
  of the two orbits $\pi_0^\pm$ of $P_V$ on $P$, given by weights with integer
  and  half-integer coordinates respectively. Let $\pi^\pm$ be the corresponding full
  subgroupoids of $\pi$. The generators $T^i_j(u)$ with non-trivial image by $\sigma_V$ are labeled by commutative squares in either $\pi^+$ or $\pi^-$. So they
  preserve $V^+$ and $V^-$.
\end{proof}
As in the case of Yangians and quantum affine algebras we can twist a representation by a
shift automorphism, see Section \ref{ss-auto}, to obtain new representations. For $z\in\mathbb C$ let
$\tau_z$ be the automorphism acting on generators as $T^{i}_{j}(u)\mapsto T^{i}_{j}(u-z)$,
$\xi^\pm(u)\mapsto \xi^\pm(u-z)$, $\xi_2^\pm(u)\mapsto \xi_2^\pm(u-z)$.
Then for any representation $W=(W,\sigma_W)$ of $E_\ell(\mathfrak g^{(1)})$,
$\sigma_W\circ\tau_z$ is also a representation on the same underlying vector space $W$.
We denote it by $W(z)$. Since $\tau_{z+w}=\tau_z\circ\tau_w$, $W(z)(w)=W(z+w)$.
\begin{definition}
  The representation $V(z)=(V^{\textrm{RSOS}},\sigma_V\circ\tau_z)$ of
  $E_\ell(\mathfrak g^{(1)})$ is called the
  \emph{vector representation with spectral parameter $z$}.
\end{definition}
\subsection{Scalar representations and duality}
Let $\mathcal O=\mathbb C[[z,p/z]]$, with $z=e^{2\pi iu},p=e^{2\pi i\tau}$ be the ring of formal Fourier series
(see Section \ref{ss-2.4}). For each invertible element $f\in\mathcal O^\times$ of $\mathcal O$ we have a representation $(\mathbb C_f,\sigma_f)$ with underlying vector space with non-trivial weight spaces
\[
(\mathbb C_f)_{\mathrm{id}_a}=\mathbb C, \quad a\in \pi_0.
\]
In the orthogonal case $\mathfrak g=B_n,C_n,D_n$, the action of generators is
\[
\sigma_f(T_j^i(u))=f(u)\delta_{ij}, \quad \sigma_f(\xi^\pm(u))=f(u)^{\pm1}f(u+\lambda)^{\pm1}.
\]
In the general linear case $\mathfrak g=A_n$, it is
\begin{align*}
\sigma_f(T_j^i(u))&=f(u)\delta_{ij}, \quad \sigma_f(T^*)_{_j}^{-i}(u))=f(u-\lambda)^{-1}\delta_{ij},
\\
\sigma_f(\xi_2^\pm(u))&=\frac{f(u+\lambda)^{\pm1}}{f(u-\lambda)^{\pm1}}.    
\end{align*}
\begin{lemma}
    These formulas define a representation of the restricted elliptic quantum groups $E_\ell(\mathfrak g^{(1)})$. For $f=1$ we obtain the trivial representation (given by the co-unit) and for any $f,g\in\mathcal O^\times$,
    \[
    \mathbb C_f\otimes\mathbb C_g\cong\mathbb C_{fg}.
    \]
\end{lemma}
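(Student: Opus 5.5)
To prove the lemma, I would check directly that the assignments are compatible with every defining relation of $E_\ell(\mathfrak g^{(1)})$. Since $A$ is defined by generators and relations (Section \ref{sec-2.3}), a representation on the $\pi$-graded space $\mathbb C_f$ is the same as an assignment of generators that kills every relation.

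\textbf{The grading.} All generators act diagonally: $T^i_j(u)$ acts as $f(u)\delta_{ij}$. Only components labelled by squares with identity vertical arrows act nontrivially on $\mathbb C_f$, because $\mathbb C_f$ lives on identity arrows. Such a square $\mathrm{id}^\bullet_u$ with $i=j=\mathrm{id}_a$ forces the horizontal arrows to agree. So I would first record that $\sigma_f(T^i_j(u))$ is the scalar $f(u)$ on the component of degree $\mathrm{id}^\bullet_{\mathrm{id}_a}$ and zero otherwise. This is exactly the structure of the counit $\epsilon$ rescaled by $f$.

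\textbf{The relations.} I would then check them one at a time.
\begin{itemize}
\item Relation \eqref{xi} (respectively \eqref{xiA}) holds trivially, since the assigned $\xi^\pm$ values are mutually inverse.
\item The $RTT$ relations \eqref{eqn:RTTc}, \eqref{eqn:1}, \eqref{eqn:3}, \eqref{eqn:4} reduce to the counit computation in Lemma \ref{l2An}. Both sides become the same $R$-matrix component, multiplied by $f(u_1)f(u_2)$ on the left and by $f(u_2)f(u_1)$ on the right. The analogous products $f(u_1-\lambda)^{-1}f(u_2-\lambda)^{-1}$ arise for $T^*$, and a mixed product arises in \eqref{eqn:4}. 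In each case the factors commute, so both sides agree.
\item The crossing relations \eqref{TTwc}, \eqref{TTsc} become $f(u+\lambda)f(u)\,\omega=\sigma_f(\xi^+(u))\,\omega$, which holds by the choice $\sigma_f(\xi^+(u))=f(u)f(u+\lambda)$.
\item In the general linear case, \eqref{eqn:5} gives $f(u+\lambda)f(u-\lambda)^{-1}=\sigma_f(\xi_2^+(u))$, which matches the stated formula.
\item Relation \eqref{eqn:6} gives $f(u)^{-1}f(u)=1$.
\item Relation \eqref{eqn:7} gives $f(u-\lambda)^{-1}f(u+\lambda)=\sigma_f(\xi_2^+(u))$, again consistent.
\item Relation \eqref{eqn:8} gives $f(u)f(u)^{-1}=1$.
\end{itemize}
Because $f$ is invertible in $\mathcal O$ and the relations hold as identities of formal Fourier series, the coefficientwise relations follow. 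For $f=1$ all assignments reduce to $\epsilon$ composed with the trivial action on $\mathbf 1$, so $\mathbb C_1$ is the trivial representation.

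\textbf{Tensor products.} The tensor product $\mathbb C_f\otimes\mathbb C_g$ has components $(\mathbb C_f\otimes\mathbb C_g)_{\mathrm{id}_a}=\mathbb C\otimes\mathbb C\cong\mathbb C$. The coproduct is $\Delta(T^i_j)=\sum_k T^k_j\otimes_h T^i_k$. Only $k=\mathrm{id}$ terms survive, so $T^i_j(u)$ acts by $f(u)g(u)\delta_{ij}$, and $T^*$ acts by $f(u-\lambda)^{-1}g(u-\lambda)^{-1}$. Since $\Delta(\xi^\pm)=\xi^\pm\otimes_h\xi^\pm$, the $\xi$'s act by the products of the corresponding scalars. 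All of these are the formulas for $fg$, so the canonical identification is an isomorphism of representations.

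\textbf{Main obstacle.} The only delicate point is the general linear bookkeeping. One has to confirm that the shifts by $\lambda$ in $\sigma_f(T^*)$ and $\sigma_f(\xi_2^\pm)$ are consistent simultaneously across \eqref{eqn:5}--\eqref{eqn:8}. I would settle this by evaluating those four relations explicitly on the single nonzero component. I would also check that $\omega$ and $\sigma$ contract to the identity there, using the relations (\ref{sigmaii}) and (\ref{sigmaiistar}).
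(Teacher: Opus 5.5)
Your strategy is the paper's own: its proof is just ``a verification of the relations''. Your scalar checks of \eqref{xi}, \eqref{xiA}, the $RTT$ relations, \eqref{TTwc}, \eqref{TTsc} and \eqref{eqn:5}--\eqref{eqn:8} are correct.

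\textbf{Error in the grading paragraph.} The grading paragraph, which you put first and which underlies the identification with the counit, has horizontal and vertical interchanged. As literally stated, it describes an assignment that is \emph{not} a representation.

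A representation of $A$ is a module over $A$ regarded as a $\Gamma^\circ$-graded algebra. A component $A_\alpha$ with $\alpha\in\Gamma^\circ(h,h')$ acts $V_{h'}\to V_h$ through its top and bottom, i.e.\ horizontal, arrows. Since $\mathbb C_f$ is supported on identity arrows, the components that can act are those with \emph{identity horizontal} arrows, and commutativity of the square then forces the two vertical arrows to coincide. These are the squares $\mathrm{id}^\bullet_i$ with $i\in\tilde\pi$ arbitrary, say $i\in\pi(a,b)$. On such a square $T^i_i(u)$ acts as $f(u)\colon(\mathbb C_f)_{\mathrm{id}_b}\to(\mathbb C_f)_{\mathrm{id}_a}$. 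This is exactly the support of $\epsilon$, since $I_h$ lives on the squares $\mathrm{id}^\bullet_u$.

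Your version (identity \emph{vertical} arrows, $i=j=\mathrm{id}_a$, nonzero only on $\mathrm{id}^\bullet_{\mathrm{id}_a}$) fails concretely. In types $A$, $C$, $D$ the set $\tilde\pi$ contains no identity arrows, so every $T^i_j(u)$ would act by zero. In type $B$, everything except the $T^i_i$ with $i=(a,\varepsilon_0)$ would act by zero. Then the left-hand sides of \eqref{TTwc} and \eqref{eqn:5} vanish, while the right-hand sides $\xi^+(u)\,\omega$ and $\omega\,\xi_2^+(u)$ do not, because the scalars are invertible and $\omega$ is nondegenerate.

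\textbf{Same slip in the tensor-product step.} In $\Delta(T^i_j(u))=\sum_k T^k_j(u)\otimes_h T^i_k(u)$, the first factor acts on $\mathbb C_f$ only if $k=j$, and the second acts on $\mathbb C_g$ only if $i=k$. So the surviving term is $k=i=j$, not $k=\mathrm{id}$; taking $k=\mathrm{id}$ literally would again give zero.

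Your actual relation checks already use the correct picture, for example $f(u+\lambda)f(u)\,\omega$ with $\omega$ pairing $V_i$ and $V_{i^{-1}}$ for non-identity $i$. Once the grading paragraph and the tensor-product step are corrected, the argument goes through as written.
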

\begin{proof} The proof is a verification of the relations, see Definitions \ref{def-gltype}, \ref{def2}.
\end{proof}

\begin{prop}\label{p-dual}\ 
 \begin{enumerate}
\item[(i)] Let $W$ be a representation of $E_\ell(\mathfrak g^{(1)})$, such that $\xi_2(u)$ acts by a scalar $\rho_2(u)\in\mathcal O$. Then for any $z\in\mathbb C$,
\[
W(z)^{**}\cong W(z+2\lambda)\otimes \mathbb C_{f},\quad f(u)=\rho_2(u-z-\lambda).
\] 
  \item[(ii)] In the $B_n^{(1)},C_n^{(1)},D_n^{(1)}$ cases, the vector representation obeys
    \[
      V(z)\cong V(z-\lambda)^*\otimes \mathbb C_{f},\quad f(u)=\rho(u-z).
    \]
  \end{enumerate}
\end{prop}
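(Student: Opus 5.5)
The plan for (i) is to compute the double-dual action directly. The action of $a$ on $W^{**}$ is $\rho_{\iota\iota(\alpha)}(S^2(a))=\rho_\alpha(S^2(a))$, transported along the canonical identification $W^{**}_f=W_f$. This identification holds because $W$ is finite dimensional and $\iota$ is an involution on $\pi^2$. So $W(z)^{**}$ is the representation $\sigma_W\circ\tau_z\circ S^2$ on the same $\pi$-graded space. Theorem \ref{t-S2} gives $S^2=D_\gamma\circ\psi$, and I would commute $\tau_z$ past these. Since $\psi$ shifts spectral parameters by $2\lambda$ and multiplies by central $\xi$'s, and $\tau_z$ only shifts $u$, we get $\tau_z\circ\psi=\psi'\circ\tau_z$ with the same formulas. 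This yields
\[
\sigma_W\tau_zS^2(T^j_i(u))=\frac{G_{s(j)}G_{t(i)}}{G_{t(j)}G_{s(i)}}\,\sigma_W(\xi_2^+(u-z-\lambda))\,\sigma_W(T^j_i(u-z-2\lambda)),
\]
and similarly for $T^*$, with $\xi_2^-(u-z-2\lambda)$.

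The key steps are the following. First, the factor $D_\gamma$ is a coboundary: it has the form \eqref{e-abcd} with $g(a)=G_a$. Hence it is implemented by the diagonal $\pi$-graded automorphism $\Phi$ of $W$ acting on $W_f$ by multiplication by $G_{t(f)}/G_{s(f)}$ (or its inverse; the convention is fixed by checking one square). Concretely, $\Phi\,\sigma(x)\,\Phi^{-1}=\sigma(D_\gamma x)$ for $x\in A_\alpha$, because $\sigma(x)$ maps $W_g\to W_f$ with $f,g$ the top and bottom arrows of $\alpha$, and the ratio $G_cG_b/(G_aG_d)$ is exactly the quotient of the two diagonal factors. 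So $\Phi$ is an isomorphism from $W(z)^{**}$ to the representation $\sigma_W\circ\tau_z\circ\psi$. Second, I use that $\xi_2^\pm$ act by the scalars $\rho_2^{\pm1}$. Then $\sigma_W\tau_z\psi(T(u))=\rho_2(u-z-\lambda)\,\sigma_W(T(u-z-2\lambda))$, and this is the action of $W(z+2\lambda)\otimes\mathbb C_f$ with $f(u)=\rho_2(u-z-\lambda)$. For the $T^*$ generators, the scalar rep gives $f(u-\lambda)^{-1}=\rho_2(u-z-2\lambda)^{-1}$, which matches $\xi_2^-(u-z-2\lambda)$. I must also check that $\xi_2^\pm$ agree: the left side gives $\rho_2(u-z)^{\pm1}$. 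On the right, the $\xi$ value in $W(z+2\lambda)$ is $\rho_2(u-z-2\lambda)^{\pm1}$, and multiplying by $\sigma_f(\xi_2^\pm)=(f(u+\lambda)/f(u-\lambda))^{\pm1}=(\rho_2(u-z)/\rho_2(u-z-2\lambda))^{\pm1}$ gives agreement.

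For (ii), the plan is to construct an explicit intertwiner $V(z)\to V(z-\lambda)^*\otimes\mathbb C_f$ from the inner product. Take the map $v\mapsto\omega(v\otimes\cdot)$, i.e.\ $\omega$ regarded as a $\pi$-graded isomorphism $V_i\to (V^*)_i=V_{i^{-1}}^*$; this is well defined since $\omega$ pairs $V_i$ with $V_{i^{-1}}$ nondegenerately. The dual action is $\langle\rho^*(T^j_i(u))\phi,v\rangle=\langle\phi,\rho(S(T^j_i(u)))v\rangle$, and $S(T^j_i(u))=\xi^-(u-\lambda)\omega_{i,-i}T^{-i}_{-j}(u-\lambda)\sigma^{-j,j}$. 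In $V(z-\lambda)$ this becomes $\rho(u-z)^{-1}\,\omega\,\check R(u-z)\,\sigma$, a $90^\circ$ rotation of the $R$-matrix. I then conjugate by $\omega$ and use rotational invariance, i.e.\ the Proposition in Section \ref{sec-3.1} built from Lemma \ref{lWWR} and the relation $\check R^{*_1}(u)=\check R(u)\rho(-u)$. This should reduce the action to $\check R(u-z)$ times a scalar, and the scalar should be absorbed by $\mathbb C_f$ with $f(u)=\rho(u-z)$. The $\xi^\pm$ check is analogous to (i).

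The main obstacle is the scalar bookkeeping in (ii). Combining $\rho(u-z)^{-1}$ from $\xi^-$, the factor $\alpha(-u)$ from rotational invariance, and the inversion relation must give exactly $\rho(u-z)$, and the order of rotation ($\lambda-u$ versus $u$) must match. I would first verify this on component (1) of the Boltzmann weights, where $\check R=1$, to fix signs and shifts.
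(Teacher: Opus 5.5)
Your part (i) is the paper's proof. $W(z)^{**}$ is $W(z)$ twisted by $S^2=D_\gamma\circ\psi$. The coboundary $D_\gamma$ is absorbed by a diagonal rescaling; the paper uses $w\mapsto (G_a/G_b)\,w$ on $W_\mu$ for $\mu\in\pi(a,b)$, which is the convention your one-square check will produce. The automorphism $\psi$ then yields the factor $\rho_2(u-z-\lambda)$, and your $\xi_2^\pm$ and $T^*$ bookkeeping is correct.

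For (ii) you take a genuinely different route from the paper.

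\emph{The paper's route.} It never computes the dual action. It uses the coproduct to show that the pairing $\omega\colon V(z)\otimes V(z-\lambda)\to\mathbb C_f$ is a module map. This is a crossing-type identity obtained from \eqref{eqn:invrel} and \eqref{eqn:rotrel} (cf.\ Lemma \ref{lWWR}). The isomorphism $V(z)\cong V(z-\lambda)^*\otimes\mathbb C_f$ then follows implicitly from rigidity, $\operatorname{Hom}(U\otimes W,X)\cong\operatorname{Hom}(U,X\otimes W^*)$, together with nondegeneracy of $\omega$.

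\emph{Your route.} You push the antipode-defined action through $v\mapsto\omega(v\otimes\cdot)$. This gives the isomorphism as an explicit map without the adjunction, and reduces the check to a symmetry of a single $R$-matrix. The paper's version instead works with the crossing relation itself, from which the antipode was built.

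\emph{Resolving your flagged obstacle.} The operation is a $180^\circ$ rotation, not a $90^\circ$ one. The factors $\omega_{i,-i},\sigma^{-j,j}$ in $S(T^j_i(u))$ reverse the vertical edges, while transposition together with conjugation by $\omega$ reverses the horizontal ones. So the intertwining condition is
\[
\check R^{l,j}_{i,k}(v)\,\omega_{l,-l}=\omega_{k,-k}\,\omega_{i,-i}\,\sigma^{-j,j}\,\check R^{k^{-1},i^{-1}}_{j^{-1},l^{-1}}(v),\qquad v=u-z,
\]
with the same argument $v$ on both sides, so there is no $\lambda-u$ versus $u$ mismatch. Applying rotational invariance twice gives the scalar $\rho(v-\lambda)\rho(-v)=1$; this is the double-rotation identity used in the proof of \eqref{eqn:rel2}. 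The gauge factors on the two sides also agree. The only surviving scalar is therefore $\rho(u-z)^{-1}$, coming from $\xi^-(u-\lambda)$ acting on $V(z-\lambda)$, and it is cancelled exactly by $f(u)=\rho(u-z)$. No separate appeal to the inversion relation is needed.
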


\begin{proof}\ 
\begin{enumerate}
    \item[(i)] The representation $W(z)^{**}$ has the same underlying $\pi$-graded vector space $W$
as $W(z)$ but the action
of the algebra is twisted by the automorphism $S^2$. By Theorem \ref{t-S2}, the map
\[
  \phi\colon W_\mu\ni w\mapsto \frac{G_{a}}{G_{b}}w\otimes 1\in (W\otimes\mathbb C_f)_\mu =W_\mu\otimes  (\mathbb C_{f})_{\mathrm{id}_b},\quad \mu\in\pi(a,b),
\]
with $f(u)=\rho_2(u-z-\lambda)$
is an isomorphism $W(z)^{**}\to W(z+2\lambda)\otimes\mathbb C_f$.
    \item[(ii)] The relations (\ref{eqn:invrel}) and (\ref{eqn:rotrel}) prove the identity
    \begin{align*}
        \omega\circ\sigma_{V(z)\otimes V(z-\lambda)}(T_i^j(u))= \omega\circ \sum_k \R_i^k(u-z)\otimes \R_k^j(u-z+\lambda)=\rho(u-z)\omega,
    \end{align*}
    which together with
    \begin{align*}
        \omega\circ\sigma_{V(z)\otimes V(z-\lambda)}(\xi^\pm (u))= \rho(u-z)^{\pm 1}\rho(u-z+\lambda)^{\pm 1}\omega=\sigma_f(\xi^\pm (u))\omega
    \end{align*}
    shows that the map $\omega $ is an isomorphism $\colon V(z)\otimes V(z-\lambda)\to\mathbb C_f$ for $f(u)=\rho(u-z)$.
\end{enumerate}
\end{proof}
\subsection{A category of representations with elliptic coefficients}
Out of the vector representation we can construct a subcategory of the monoidal
category of modules over our elliptic quantum algebras $E_\ell(\mathfrak g^{(1)})$.
Namely we can take subquotients of
\[
  V(z_1)\otimes\cdots \otimes V(z_r)\otimes \mathbb C_f
\]
for any complex numbers $z_1,\dots, z_r$ and formal Fourier series $f$ converging to an elliptic function. In the $A_n$ case we can also replace
some factors $V(z_i)$ by $V^*(z_i)$. This gives a rigid monoidal category
with a forgetful functor to $\pi$-graded vector spaces, and for any pairs
$U,W$ of objects we have a morphism $\check R_{UW}(z)\colon
U(z)\otimes W\to W\otimes U(z)$ for generic $z$; these morphisms
are solutions of Yang--Baxter equations \cite{FelderRen2021}. 

A priori on these representation the generators $T(u)$ act by formal Fourier series \eqref{fps}. But
since these actions are polynomials in the matrix elements of generators $T(u)$ for
the vector representation, which are meromorphic functions, the power series converge to meromorphic functions
of $u\in\mathbb C$ and $\tau$ in the upper half-plane. Moreover they are
doubly periodic:

\begin{prop} Let $q=e^{\frac{\pi i}L}$ and $\tilde\pi$ the set of arrows $(a,\epsilon_i)$
  of $\pi$ appearing as degrees in the vector representation of
  $E_\ell(\mathfrak g^{(1)})$. Let $\chi\colon \tilde\pi\to \frac12\mathbb Z$ be
  the map $(a,\epsilon_i)\mapsto a_i$. Then
  the action of $T^i_j(u)$ and $\xi^{\pm}(u)$, $\xi_2^{\pm}(u)$ for 
  on $V(z_1)\otimes\cdots\otimes V(z_r)$ obeys
  \begin{align*}
    T^i_j(u+L)&=T^i_j(u), \quad T^i_j(u+L\tau)=q^{2(r+\chi(i)-\chi(j))}T^i_j(u),
    \\
    \xi^\pm(u+L)&=\xi^\pm(u),\quad \xi^\pm(u+L\tau)=q^{\pm4r}\xi^\pm(u),
                  \quad \text{for $\mathfrak g =B_n,C_n,D_n$,}
    \\
    \xi_2^\pm(u+L)&=\xi_2^\pm(u),\quad \xi_2^\pm(u+L\tau)=\xi_2^\pm(u),
    \quad \text{for $\mathfrak g=A_n$.}
  \end{align*}
  In particular $T^i_j(u)$, $\xi^\pm(u)$ are elliptic functions
  of $u$ with periods $L$ and $2L^2\tau$.
\end{prop}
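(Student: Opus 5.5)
The plan is to reduce everything to the quasi-periodicity of the function $[x]$ and then propagate through the coproduct. First I would record the elementary properties of $[x]=\theta(x/L,\tau)/(\theta'(0,\tau)/L)$:
\[
[x+L]=-[x],\qquad [x+L\tau]=-e^{-\pi i\tau-2\pi i x/L}[x].
\]
Consequently, a ratio $\prod_k[x_k+u]/\prod_k[y_k+u]$ with equal numbers of factors in numerator and denominator is $L$-periodic. Under $u\mapsto u+L\tau$ it picks up only the factor $e^{-2\pi i(\sum x_k-\sum y_k)/L}=q^{-2(\sum x_k-\sum y_k)}$. Factors not depending on $u$, such as the square roots of $G$-ratios, are unchanged.

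Next I would treat the vector representation $V(z)$, i.e.\ $r=1$. Here $T^j_i(u)$ acts through the matrix elements $\check R^{k,l}_{i,m}(u-z,a)$ (respectively $(\check R_{VV})$ and $(\check R_{V^*V})$ in type $A$). I would go through the list (1)--(6) one entry at a time.
\begin{itemize}
\item For (1) the weight is constant.
\item For (2) the factor $[a_i-a_j-u]/[1+u]$ has difference of shifts $(a_i-a_j)+1$, up to the sign of $u$; this has to be reconciled with the convention $-u$ versus $u$.
\item For (3) the factor $[u]/[1+u]$ gives $q^{2}$.
\item For (4) and (5) the three-over-three ratios give a factor depending on $a_i+a_j$ or $2a_i$.
\end{itemize}
In every case the factor should match $q^{2(1+\chi(i)-\chi(j))}$. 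Here $\chi$ of the vertical arrows $(a,\varepsilon_i)$ and $(b,\varepsilon_l)$ records the relevant coordinates $a_i$ and $b_l=a_l+\ldots$ of the source. This uses the commutativity of the square to express $a_i-a_j$ as a difference of $\chi$-values of the left and right vertical arrows.

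I expect this bookkeeping to be the main obstacle: checking uniformly in all six weight types, and for the rotated $\check R_{V^*V}$ (where $\lambda-u$ appears and $\chi$ of an inverse arrow enters), that the multiplier depends only on $\chi(i)-\chi(j)$ plus a constant $1$. If a discrepancy by a gauge factor appears, I would absorb it by the convention for $\chi$ on $V^*$-arrows. For $\xi^\pm$ acting by $\rho(u-z)^{\pm1}$ the same rule gives the multiplier directly. For $\rho$ the shifts sum to $-1+\lambda$ upstairs and $0+1+\lambda$ downstairs, so the multiplier is $q^{4}$. For $\rho_2$ the $u$-coefficients have opposite signs, so the multipliers cancel and $\rho_2$ is elliptic.

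Then I would pass to $V(z_1)\otimes\cdots\otimes V(z_r)$ by induction on $r$ using the coproduct $\Delta(T^j_i(u))=\sum_k T^k_i(u)\otimes_h T^j_k(u)$. Each summand gets the factor $q^{2(1+\chi(i)-\chi(k))}q^{2(r-1+\chi(k)-\chi(j))}$. The intermediate $\chi(k)$ telescopes, which yields $q^{2(r+\chi(i)-\chi(j))}$. Since $\xi^\pm$ is grouplike, its multipliers simply multiply to $q^{\pm4r}$, and $\xi_2^\pm$ stays invariant. The shift $z_i$ does not affect multipliers, because a shift of $u$ only rescales by constants in $u$. $L$-periodicity is immediate from the equal count of theta factors.

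Finally, for the ellipticity claim, I would note that $\chi$ takes values in $\frac12\mathbb Z$. So $2(r+\chi(i)-\chi(j))\in\mathbb Z$ and $q^{2L\cdot(\text{integer})}=1$. Iterating the $L\tau$ shift $2L$ times (i.e.\ the period $2L^2\tau$) therefore gives a trivial multiplier, and the same holds for $\xi^\pm$.
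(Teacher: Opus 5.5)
Your route is the paper's route: check $r=1$ from the quasi-periodicity of $[x]$, then induct on $r$ through $\Delta$, with the intermediate $\chi(k)$ telescoping and $\xi^\pm,\xi_2^\pm$ grouplike. For $B_n,C_n,D_n$ the case check you left as ``should match'' does go through. Each nonzero weight (1)--(6) picks up exactly $q^{2(1+\chi(i)-\chi(j))}$, where $i$ is the left and $j$ the right vertical arrow. This includes both summands of (5) and (6), and the $\varepsilon_0$-entries with $a_0=-\frac12$. That left/right reading is the one you use, and it is the correct one. Weight (1) is constant while $\chi$ increases by $1$ from left to right. So with the paper's convention $T^i_j\in\operatorname{Hom}(V_j,V_i)\otimes A_\alpha$, the displayed exponent $\chi(i)-\chi(j)$ has $i$ and $j$ interchanged.

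The genuine gap is type $A_{n-1}$, and it sits exactly in the bookkeeping you flagged.

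\begin{itemize}
\item There $\varepsilon_l=\bar\varepsilon_l-\frac1n\bar\varepsilon$, so $(a+\varepsilon_l)_j=a_j+\delta_{jl}-\frac1n$, and $\chi$ takes values in $\frac1n\mathbb Z$, not in $\frac12\mathbb Z$.
\item Weight (1) is identically $1$, yet your rule gives $1+\chi(i)-\chi(j)=1+a_i-(a_i+1-\frac1n)=\frac1n$, i.e. a multiplier $q^{2/n}\neq1$. Weights (2) and (3) are off by the same factor $q^{2/n}$.
\item The correct multiplier on $V(z_1)\otimes\cdots\otimes V(z_r)$ is $q^{2(r(1-\frac1n)+\chi(i)-\chi(j))}$.
\end{itemize}

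Your proposed fallback cannot repair this, for two reasons.

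\begin{itemize}
\item $V^*$-arrows never occur here: $\Delta(T)$ involves only $T$, and $T$ acts on $V$ through $\check R_{VV}$ alone.
\item No redefinition of $\chi$ on $V$-arrows can absorb a constant defect of $\frac1n$ per square. The correction would have to grow by $\frac1n$ from left to right arrow in each of the $n$ weight-(2) squares of the closed chain through $\rho,\ \rho+\varepsilon_1,\ \rho+\varepsilon_1+\varepsilon_2,\dots,\ \rho+\varepsilon_1+\cdots+\varepsilon_n=\rho$ (all in the alcove for $\ell\ge1$). It would then return to the starting arrow with net change $1$, which is impossible.
\end{itemize}

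So in type $A$ the statement itself needs this correction; the paper's ``easily checked for $r=1$'' misses it as well. The ellipticity conclusion still holds, because the corrected exponent $r(1-\frac1n)+\chi(i)-\chi(j)$ is an integer. However, your justification via $\chi\in\frac12\mathbb Z$ does not apply in type $A$.
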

\begin{proof}
  This is easily checked for $r=1$ using the transformation properties
  \[
    \frac{[x+L-a]}{[x+L-b]}=\frac{[x-a]}{[x-b]},\qquad
    \frac{[x+L\tau-a]}{[x+L\tau -b]}=e^{\frac{2\pi i}{L}(a-b)}\frac{[x+a]}{[x+b])}.
   \]
   The general case follows by induction in $r$.
 \end{proof}
 Conjecturally, these properties hold for all finite dimensional representations.
 
In certain cases, we can require the central elements to act by 1:
\begin{prop}
    Let $\mathfrak{g}^{(1)}=C_n^{(1)},D_n^{(1)}$. Let $L$ be a period of the function $v\mapsto[v]$ and $\lambda$ the crossing parameter of the algebra. If $\lambda$ and $L$ are rationally related, i.e. $\frac{-\lambda}{L}=\frac{p}{q}$ for $p,q\in\mathbb{N}$ such that $\operatorname{gcd}(p,q)=1$, and $q$ is odd, there exists a solution $\rho'(u)$ of the following equation
    \begin{equation}\label{rho}
\rho' (u)\rho'(u+\lambda)=\rho (u),
       \end{equation}
    where $\rho(u)$ is given in (\ref{eqn:rotrel}), and it is
\begin{equation*}
    \rho'(u)=\prod_{i=0}^{\frac{q-1}{2}}\sqrt{\frac{[u+2i\lambda-1][u+(2i+1)\lambda]}{[u+2i\lambda][u+(2i+1)\lambda+1]}}\prod_{j=0}^{\frac{q-1}{2}-1}\sqrt{\frac{[u+(2j+1)\lambda ][u+2(j+1)\lambda+1]}{[u+(2j+1)\lambda -1][u+2(j+1)\lambda]}}.
  \end{equation*}
\end{prop}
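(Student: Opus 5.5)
The plan is a telescoping argument. We first recognise the product under the square roots as an alternating product of shifted copies of $\rho$. Then we use the periodicity $[x+L]=-[x]$ of the theta function to close the telescope.

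\textbf{Step 1: rewrite $\rho'(u)^2$ as an alternating product.} Set $\rho(u)=\frac{[u-1][\lambda+u]}{[u][1+\lambda+u]}$ as in \eqref{e-rho}. Compare with the factors in the definition of $\rho'$:
\begin{itemize}
\item the $i$-th factor of the first product is exactly $\rho(u+2i\lambda)$, for $i=0,\dots,\frac{q-1}{2}$;
\item the $j$-th factor of the second product is exactly $\rho(u+(2j+1)\lambda)^{-1}$, for $j=0,\dots,\frac{q-3}{2}$.
\end{itemize}
Hence, before taking square roots,
\[
\rho'(u)^2=\prod_{k=0}^{q-1}\rho(u+k\lambda)^{(-1)^k}.
\]

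\textbf{Step 2: telescope.} Multiply this identity by the same identity with $u$ replaced by $u+\lambda$. This gives
\[
\rho'(u)^2\rho'(u+\lambda)^2=\prod_{k=0}^{q-1}\rho(u+k\lambda)^{(-1)^k}\prod_{k=1}^{q}\rho(u+k\lambda)^{(-1)^{k-1}}.
\]
Every factor with $1\le k\le q-1$ appears once with each sign and cancels. What remains is
\[
\rho'(u)^2\rho'(u+\lambda)^2=\rho(u)\,\rho(u+q\lambda)^{(-1)^{q-1}}=\rho(u)\,\rho(u+q\lambda),
\]
where the last equality uses that $q$ is odd. This parity condition is exactly where the hypothesis on $q$ enters.

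\textbf{Step 3: use periodicity.} Since $-\lambda/L=p/q$, we have $q\lambda=-pL$. The function $[x]$ has zeros on $\mathbb ZL+\mathbb ZL\tau$ and satisfies $[x+L]=-[x]$, by \eqref{e-triple}. So shifting the argument by $-pL$ multiplies each of the two numerator brackets and each of the two denominator brackets of $\rho$ by $(-1)^p$. These signs cancel, giving $\rho(u+q\lambda)=\rho(u)$. Therefore
\[
\big(\rho'(u)\rho'(u+\lambda)\big)^2=\rho(u)^2.
\]

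\textbf{Step 4: fix the square-root branches (the main obstacle).} Step 3 only gives $\rho'(u)\rho'(u+\lambda)=\epsilon\,\rho(u)$ with $\epsilon=\pm1$. Both sides are meromorphic, or at least analytic branches on a connected domain, so $\epsilon$ is locally constant and hence constant there.

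To pin down $\epsilon$, I would choose the branches as in Remark~\ref{r-1}: take $\tau$ purely imaginary and $L>0$, and work on a real interval of $u$ where all bracket arguments are positive, with positive square roots there. Then:
\begin{itemize}
\item the left side $\rho'(u)\rho'(u+\lambda)$ and $\rho(u)$ have a common sign on that interval, which forces $\epsilon=1$;
\item alternatively, I would evaluate the leading behaviour as $u\to 0^{+}$, or compare the trigonometric limit $\tau\to i\infty$, to confirm $\epsilon=1$;
\item analytic continuation then extends the identity \eqref{rho} to all $u$.
\end{itemize}
The delicate point is that one branch of $\rho'$ must be used simultaneously at $u$ and at $u+\lambda$. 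I would address this by defining $\rho'$ as a single analytic continuation along a path, or by adjusting the overall sign of $\rho'$, which is harmless since replacing $\rho'$ by $-\rho'$ does not change the product $\rho'(u)\rho'(u+\lambda)$.
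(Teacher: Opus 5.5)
Your Steps 1--3 are the paper's argument. The paper also telescopes to $\rho'(u)\rho'(u+\lambda)=\sqrt{\rho(u)\rho(u+q\lambda)}$ and then uses $q\lambda=-pL$. Your remark that $[x+L]=-[x]$, so that only the ratio $\rho$ (not each bracket) is invariant under $x\mapsto x-pL$, correctly sharpens the paper's ``$[u-pL]=[u]$''.

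The gap is in Step 4, and you created it by squaring first: that produces a sign $\epsilon$ which your argument does not determine. Your main device needs a real interval on which every radicand of $\rho'(u)$ and $\rho'(u+\lambda)$ is positive, and such an interval need not exist. Take $\lambda=-2$, $L=6$ (type $C_1$ at level $\ell=1$, so $p/q=1/3$). Then $\rho(x)=[x-2]/[x]$ and
\[
\rho'(u)^2=\frac{\rho(u)\,\rho(u-4)}{\rho(u-2)}=-\frac{[u-2]^2}{[u-4]^2},
\]
which is negative for every real $u$ when $\tau$ is purely imaginary. Taking roots factor by factor at, say, $u=5$ even gives $\rho'(u)\rho'(u+\lambda)=-\rho(u)$. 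Replacing $\rho'$ by $-\rho'$ cannot change $\epsilon$, as you note yourself. Your other suggestions ($u\to0^+$, the trigonometric limit) are only named, not carried out.

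The repair is to telescope before squaring. Let $s(y)$ be the principal square root of $\rho(y)$. Since $\rho(y+L)=\rho(y)$, $s$ is $L$-periodic. Off the cut, $\sqrt{\rho(y)^{-1}}=s(y)^{-1}$, so the definition of $\rho'$ reads $\rho'(u)=\prod_{k=0}^{q-1}s(u+k\lambda)^{(-1)^k}$. The cancellation of your Step 2 then happens between the square roots themselves and leaves
\[
\rho'(u)\rho'(u+\lambda)=s(u)\,s(u+q\lambda)=s(u)\,s(u-pL)=s(u)^2=\rho(u).
\]
This holds identically away from the nowhere dense set where some $\rho(u+k\lambda)$, $0\le k\le q-1$, is infinite or lies in $(-\infty,0]$. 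No sign has to be fixed, and this is what the paper's one-line proof does implicitly.
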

\begin{proof}
It is easy to check that 
\[
\rho'(u)\rho'(u+\lambda)=\sqrt{\frac{[u+q\lambda-1][u+(q+1)\lambda][u-1][u+\lambda]}{[u+q\lambda][u+(q+1)\lambda+1][u][u+\lambda+1]}}.
\]
We have the condition $q\lambda =-pL$ and we use the periodicity $[u-pL]=[u-p'L]=[u]$ to obtain the result.
\end{proof}
In these cases we can consider the restricted quantum group with central elements $\xi^\pm(u)=\eta(1)$. Its representation is given by the renormalization of the $R$-matrix,
$$\tilde \R(u):=\frac{1}{\rho' (u)}\R(u).$$

\appendix
\section{General results in the unrestricted model}\label{apA}
\begin{prop}[Jimbo, Miwa, Okado \cite{JimboMiwaOkado1988}]
The Boltzmann weights satisfy the following properties:
\begin{itemize}
\item Reflection symmetry:
\begin{equation}
\label{eqn:refsym}
\W\Big(\begin{matrix}
  a & b\\
  d & c
\end{matrix}\Big\vert u\Big)=
\W\Big(\begin{matrix}
  a & d\\
  b & c
\end{matrix}\Big\vert u\Big),
\end{equation}
\item Rotational symmetry (for $B_n^{(1)}$, $C_n^{(1)}$ and $D_n^{(1)}$):
\begin{equation}
\label{eqn:rotrel}
\W\Big(\begin{matrix}
  b & a\\
  g & c
\end{matrix}\Big\vert u\Big)=\rho(-u)^{-1}\sqrt{\frac{G_aG_g}{G_bG_c}}
\W\Big(\begin{matrix}
  a & c\\
  b & g
\end{matrix}\Big\vert \lambda-u\Big),
\end{equation}
with
\begin{align*}
    \rho(u)=\frac{[u-1][\lambda +u]}{[u][1+\lambda +u]}, 
\end{align*}
\item Inversion relation:
\begin{equation}
\label{eqn:invrel}
\sum_g 
\W\Big(\begin{matrix}
  b & g\\
  d & c
\end{matrix}\Big\vert u\Big)
\W\Big(\begin{matrix}
  b & a\\
  g & c
\end{matrix}\Big\vert -u\Big)=\delta_{a,d},
\end{equation}
\item Additional inversion relation for $A_{n-1}^{(1)}$:
\begin{equation}
    \label{eqn:invAn}
    \sum_g \sqrt{\frac{G_aG_cG_g^2}{G_b^2G_d^2}}
\W\Big(\begin{matrix}
  a & b\\
  d & g
\end{matrix}\Big\vert\lambda - u\Big)
\W\Big(\begin{matrix}
  c & d\\
  b & g
\end{matrix}\Big\vert\lambda +u\Big)=\delta_{a,c}\rho_2(u),
\end{equation}
with
\begin{align*}
    \rho_2 (u)=\frac{[\lambda +u][\lambda -u]}{[1+\lambda +u][1+\lambda -u]},
\end{align*}
\item Star-triangle relation:
\begin{equation}
\label{eqn:str-appendix}
\begin{split}
\sum_g \W\Big(\begin{matrix}
  f & g\\
  e & d
\end{matrix}\Big\vert u_2\Big)
\W\Big(\begin{matrix}
  b & c\\
  g & d
\end{matrix}\Big\vert u_1-u_2\Big)
\W\Big(\begin{matrix}
  a & b\\
  f & g
\end{matrix}\Big\vert u_1\Big)\\
=\sum_g \W\Big(\begin{matrix}
  a & b\\
  g & c
\end{matrix}\Big\vert u_2\Big)
\W\Big(\begin{matrix}
  a & g\\
  f & e
\end{matrix}\Big\vert u_1-u_2\Big)
\W\Big(\begin{matrix}
  g & c\\
  e & d
\end{matrix}\Big\vert u_1\Big).
\end{split}
\end{equation}
\end{itemize}
\end{prop}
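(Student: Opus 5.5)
We want to prove the last statement of the appendix: the Jimbo--Miwa--Okado Boltzmann weights satisfy reflection symmetry, rotational symmetry, the inversion relations, the additional $A_{n-1}^{(1)}$ inversion relation, and the star-triangle relation. Since the statement is attributed to Jimbo, Miwa and Okado, the plan is to reduce each item to the identities proved in \cite{JimboMiwaOkado1988}. Only where our normalization differs do we verify things directly from the explicit weights (1)--(6) of Section~\ref{4.1}. Recall that our weights differ from theirs by the overall factor that makes weight (1) equal to $1$. The star-triangle relation \eqref{eqn:str-appendix} and the inversion relation are homogeneous under a $u$-dependent scalar rescaling, up to the scalar normalization of the inversion relation. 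We therefore first fix this scalar and then import the identities.

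\textbf{Reflection symmetry.} We check \eqref{eqn:refsym} case by case on the list (1)--(6). Transposing the square swaps the roles of $b$ and $d$. This maps each type of configuration to a configuration of the same type: in (2)--(4) it exchanges $\varepsilon_i$ with $\varepsilon_j$, or keeps them. The formulas are visibly symmetric in the required way. In (3) the expression depends on $a_i-a_j$ only through $[x+1][x-1]/[x]^2$, which is even in $x$. In (4) the factor $\sqrt{G_{a,i}G_{a,j}}$ and $[a_i+a_j+1]$ are symmetric in $i,j$. Cases (1), (5) and (6) are already symmetric.

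\textbf{Inversion relations.} For \eqref{eqn:invrel} we fix $b,c$. The sum over $g$ is a sum over at most two intermediate heights, except in the $B_n$ case $\varepsilon_0$ and the $\pm\varepsilon_i$ blocks of (4), (5), where it runs over up to $m$ heights. In the $2\times2$ blocks (2)--(3), the identity reduces to the theta addition formula
\[
[x+y][x-y][z+w][z-w]-[x+w][x-w][z+y][z-y]=[x+z][x-z][y+w][y-w],
\]
applied with $x=a_i-a_j$. The diagonal entry $(1)$ gives $1\cdot1=1$. The large blocks are where the normalization choice of $\rho$, and the crossing structure, enter. For these I would quote JMO's inversion identity and only track the normalization factor $[1+u][1-u]$, which cancels exactly because we divided by the weight (1).

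\textbf{Rotational symmetry and the $A_{n-1}^{(1)}$ relation.} Rotational symmetry \eqref{eqn:rotrel} is checked again entry by entry. A $90^\circ$ rotation sends type (1) to type (5) with $i\mapsto -i$, sends (2) to (4), and sends (3) to itself. The gauge factor $\sqrt{G_aG_g/(G_bG_c)}$ together with the definition $G_{a,i}=G_{a+\varepsilon_i}/G_a$ accounts for the square roots. The scalar $\rho(-u)^{-1}$ comes from comparing the normalizations $[1+u]$ and $[1+\lambda-u]$ of the two sides. The additional relation \eqref{eqn:invAn} follows by the same block computation as \eqref{eqn:invrel}, after substituting the rotated weights, with $\rho_2$ arising as the product of the two normalization ratios.

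\textbf{Star-triangle relation.} The relation \eqref{eqn:str-appendix} is JMO's main theorem. It is invariant under our rescaling, because each term has one weight at each of $u_1$, $u_2$ and $u_1-u_2$, so it can simply be cited.

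\textbf{Main obstacle.} I expect the hardest part to be the large blocks, namely cases (4)--(6) of the inversion and rotation relations. They involve sums of theta-function ratios over all $j$, such as the sum appearing in (6). Here I would not redo the computation; I would rely on JMO's partial-fraction identities for $\sum_j [a_j+\tfrac12+2\lambda]/[a_j+\tfrac12]$ and check only the normalization.
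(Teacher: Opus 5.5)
Since the paper gives no argument beyond the attribution to Jimbo--Miwa--Okado, your overall plan matches it in spirit: normalize by weight (1), cite JMO for the star--triangle relation and the hard inversion block, and check the rest by hand. Your reflection check and your treatment of the $1\times1$ and $2\times2$ blocks of \eqref{eqn:invrel} via the addition formula are correct. Two of your hand checks are wrong as stated, however.

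First, the rotation bookkeeping. A rotated type (1) face has equal top-left and bottom-right corners, but its top-right and bottom-left corners are $a+\varepsilon_i\neq a-\varepsilon_i$. So it is never of type (5). It is the instance $(i,j)\mapsto(-i,i)$ of (4): top arrow $\varepsilon_i$, left arrow $-\varepsilon_i$. The check reduces to $\rho(-u)^{-1}[\lambda-u][1+u]/([u][1+\lambda-u])=1$, once the gauge factor cancels $\sqrt{G_{a,-i}G_{a,i}}$. The full correspondence is:
(1)$\to$(4); (2)$\to$(4); (3)$\to$(3); (4)$\to$(2), or $\to$(1) when $j=-i$; (5)$\to$(5) with $i\mapsto -i$; (6)$\to$(6).

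With the forms of (5) and (6) used in the paper, the last two are immediate. In (6) the gauge factor is $1$ and the sum over $j$ is unchanged. In (5) the two terms trade places using $G_{a+\varepsilon_i,-i}=G_{a,i}^{-1}$. So, contrary to your ``main obstacle'' paragraph, \eqref{eqn:rotrel} needs no partial-fraction identity.

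A smaller point on normalization. For $B,C,D$ the normalizing factor also contains $[\lambda-u]$, as the denominators of (4)--(6) show, so it is not just $[1+u][1-u]$ that cancels. The clean argument is that the $1\times1$ block forces JMO's scalar to be the product of the normalizations at $u$ and $-u$.

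Second, and more seriously, your justification of \eqref{eqn:invAn} does not work. In type $A_{n-1}^{(1)}$ the weights have no rotational symmetry. That is exactly why \eqref{eqn:rotrel} is stated only for $B,C,D$ and why the paper passes to $V\oplus V^*$. So there are no ``rotated weights'' to substitute. Moreover, \eqref{eqn:invAn} is a crossed-channel unitarity that does not follow from \eqref{eqn:invrel}.

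Nor is it ``the same block computation''. In type $A$ every block of \eqref{eqn:invrel} has size at most $2$, whereas the block $b=d$ of \eqref{eqn:invAn} runs over all $g=b+\varepsilon_r$, $r=1,\dots,n$. For $a=c$ and $b=d=a+\varepsilon_p$ it is the $n$-term identity
\[
\frac{G_aG_{a+2\varepsilon_p}}{G_{a+\varepsilon_p}^2}+\sum_{r\neq p}\frac{G_aG_{a+\varepsilon_p+\varepsilon_r}}{G_{a+\varepsilon_p}^2}\,\frac{[1]^2[a_p-a_r-\lambda+u][a_p-a_r-\lambda-u]}{[1+\lambda-u][1+\lambda+u][a_p-a_r]^2}=\rho_2(u),
\]
together with the vanishing of the analogous sums for $a\neq c$. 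For $n\geq 3$ this does not follow from the two-term addition formula. You must cite JMO's second inversion relation for $A_{n-1}^{(1)}$, or prove an identity of this kind, and then normalize as you did. Only the entries with $b\neq d$, which have a single intermediate height, are an elementary check.
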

\begin{cor}
The Boltzmann weights associated to the algebra $A_{n-1}^{(1)}$ satisfy the relation 
\begin{equation}
\label{eqn:invAn2}
\begin{split}
    \sum_g \sqrt{\frac{G_aG_cG_g^2}{G_b^2G_d^2}}
\W\Big(\begin{matrix}
  g & b\\
  d & a
\end{matrix}\Big\vert\lambda +u\Big)
\W\Big(\begin{matrix}
  g & d\\
  b & c
\end{matrix}\Big\vert\lambda -u\Big)=\delta_{a,c}\rho_2(u).
\end{split}
\end{equation}
\end{cor}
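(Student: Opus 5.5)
The plan is to derive \eqref{eqn:invAn2} from the inversion relation \eqref{eqn:invAn} together with the reflection symmetry \eqref{eqn:refsym}. The key observation is that \eqref{eqn:invAn} says a certain finite matrix has an explicit right inverse. For finite square matrices a right inverse is also a left inverse, and the left-inverse identity is exactly \eqref{eqn:invAn2}.

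\emph{Step 1: set up the matrices.} Fix the two off-diagonal corners $b,d$ of the face. Let $\mathcal P_{b,d}$ be the set of weights $a$ with $b-a,\,d-a\in\{\varepsilon_1,\dots,\varepsilon_n\}$ (common predecessors), and $\mathcal S_{b,d}$ the set of $g$ with $g-b,\,g-d\in\{\varepsilon_1,\dots,\varepsilon_n\}$ (common successors). Define the matrix $X(u)$ with rows indexed by $\mathcal P_{b,d}$ and columns by $\mathcal S_{b,d}$:
\[
X(u)_{a,g}=\sqrt{\tfrac{G_aG_g}{G_bG_d}}\,\W\Big(\begin{matrix} a & b\\ d & g\end{matrix}\Big\vert \lambda-u\Big).
\]
By \eqref{eqn:refsym} we have $\W\big(\begin{smallmatrix} c & d\\ b & g\end{smallmatrix}\big\vert\lambda+u\big)=\W\big(\begin{smallmatrix} c & b\\ d & g\end{smallmatrix}\big\vert\lambda+u\big)$. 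The square-root prefactor in \eqref{eqn:invAn} factors as $\sqrt{G_aG_g/(G_bG_d)}\,\sqrt{G_cG_g/(G_bG_d)}$. Hence \eqref{eqn:invAn} reads
\[
X(u)\,X(-u)^{T}=\rho_2(u)\,\mathbb 1.
\]

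\emph{Step 2: squareness.} I would show $|\mathcal P_{b,d}|=|\mathcal S_{b,d}|$ by splitting into two cases.
\begin{itemize}
\item If $b=x+\varepsilon_i$ and $d=x+\varepsilon_j$ with $i\neq j$, then $b-d=\varepsilon_i-\varepsilon_j$ forces the unique predecessor $x$ and the unique successor $x+\varepsilon_i+\varepsilon_j$.
\item If $b=d$, both sets have $n$ elements, namely $b-\varepsilon_k$ and $b+\varepsilon_k$.
\end{itemize}
In the unrestricted (generic) setting all these weights occur. Since $\rho_2(u)\neq0$ for generic $u$, $X(u)$ is invertible with inverse $\rho_2(u)^{-1}X(-u)^T$. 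Consequently
\[
X(-u)^{T}X(u)=\rho_2(u)\,\mathbb 1 .
\]
Written in components and replacing $u$ by $-u$ (which is harmless because $\rho_2$ is even), this is
\[
\sum_{g}\sqrt{\tfrac{G_gG_a}{G_bG_d}}\sqrt{\tfrac{G_gG_c}{G_bG_d}}\;\W\Big(\begin{matrix} g & b\\ d & a\end{matrix}\Big\vert\lambda+u\Big)\,\W\Big(\begin{matrix} g & b\\ d & c\end{matrix}\Big\vert\lambda-u\Big)=\delta_{a,c}\rho_2(u).
\]
Here the summed index $g$ is now a common predecessor, and $a,c$ are common successors.

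\emph{Step 3: finish.} Apply \eqref{eqn:refsym} to the second factor to turn $\W\big(\begin{smallmatrix} g & b\\ d & c\end{smallmatrix}\big)$ into $\W\big(\begin{smallmatrix} g & d\\ b & c\end{smallmatrix}\big)$. Combining the square roots then gives exactly \eqref{eqn:invAn2}; the identity holds for generic $u$ and hence everywhere by meromorphic continuation.

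The main obstacle is Step 2. One has to check carefully that the index sets really match, and that the square-root branches (fixed as in Remark \ref{r-1}) are used consistently in both factorizations of the prefactor. The latter is fine because the same branch of $\sqrt{G_aG_g/(G_bG_d)}$ appears in both $X(u)$ and $X(-u)$.
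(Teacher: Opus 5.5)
Your argument is correct, and it takes a genuinely different route from the paper.

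The paper verifies \eqref{eqn:invAn2} by hand, in two cases. In the $1\times 1$ case $b\neq d$, it multiplies the two explicit type-(3) weights and uses $G_gG_{g+\varepsilon_i+\varepsilon_j}/(G_{g+\varepsilon_i}G_{g+\varepsilon_j})=[g_i-g_j]^2/([g_i-g_j-1][g_i-g_j+1])$. In the case $b=d$, it reduces the diagonal and off-diagonal entries to the theta-function identities underlying \eqref{eqn:invAn}, via the weight-negating substitutions $\tilde a=-g-\varepsilon_i-\varepsilon_j$, $\tilde b=-b$.

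You instead recognize \eqref{eqn:invAn} and \eqref{eqn:invAn2}, after the reflection symmetry \eqref{eqn:refsym}, as the two products $X(u)X(-u)^T$ and $X(-u)^TX(u)$ of one square matrix of meromorphic functions. The second then follows from the first because a one-sided inverse of a square matrix is two-sided.

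What each approach buys: your proof uses no explicit Boltzmann weights and no theta identities. It needs only \eqref{eqn:invAn}, \eqref{eqn:refsym}, and the equality of the numbers of common predecessors and common successors, which you check correctly for unrestricted type $A$. It would therefore transfer to any face model with these features. The paper's computation is more laborious, but it displays the underlying identities and the negation symmetry explicitly.

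Two minor remarks. First, the step ``replace $u$ by $-u$'' is superfluous, since reading off the entries of $X(-u)^TX(u)$ already gives your displayed formula. Second, the branch condition you actually need is that the prefactor in \eqref{eqn:invAn} equals $\phi(a,g)\phi(c,g)$ up to a sign independent of $g$. This holds if you set $\phi(a,g)=\sqrt{G_a}\sqrt{G_g}/\sqrt{G_bG_d}$, with a branch of $\sqrt{G_x}$ fixed for each weight $x$ (positive where $G_x>0$, as in Remark~\ref{r-1}). On the diagonal, $\phi(a,g)^2$ does not depend on the branch at all.
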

\begin{proof}
    We need to show
\begin{enumerate}
        \item
\begin{tikzcd}
g \arrow[r, "\varepsilon_j"] \arrow[d, "\varepsilon_i"']\arrow[rd,phantom,"\lambda+u"]
& b \arrow[d, "\varepsilon_i"] \\
d \arrow[r, "\varepsilon_j"']
& a 
\end{tikzcd}\begin{tikzcd}
g \arrow[r, "\varepsilon_i"] \arrow[d, "\varepsilon_j"']\arrow[rd,phantom,"\lambda -u"]
& d \arrow[d, "\varepsilon_j"] \\
b \arrow[r, "\varepsilon_i"']
& a 
\end{tikzcd}$\displaystyle\frac{G_gG_{a}}{G_bG_d}=\rho_2(u),$
\item 
\begin{tikzcd}
g \arrow[r, "\varepsilon_i"] \arrow[d, "\varepsilon_i"']\arrow[rd,phantom,"\lambda+u"]
& b \arrow[d, "\varepsilon_i"] \\
b \arrow[r, "\varepsilon_i"']
& a 
\end{tikzcd}\begin{tikzcd}
g \arrow[r, "\varepsilon_i"] \arrow[d, "\varepsilon_i"']\arrow[rd,phantom,"\lambda -u"]
& b \arrow[d, "\varepsilon_i"] \\
b \arrow[r, "\varepsilon_i"']
& a 
\end{tikzcd}$\displaystyle\frac{G_gG_a}{G_b^2}$\\
$+\sum_{j\neq i}$\begin{tikzcd}
g^\prime \arrow[r, "\varepsilon_j"] \arrow[d, "\varepsilon_j"']\arrow[rd,phantom,"\lambda+u"]
& b \arrow[d, "\varepsilon_i"] \\
b \arrow[r, "\varepsilon_i"']
& a 
\end{tikzcd}\begin{tikzcd}
g^\prime \arrow[r, "\varepsilon_j"] \arrow[d, "\varepsilon_j"']\arrow[rd,phantom,"\lambda -u"]
& b \arrow[d, "\varepsilon_i"] \\
b \arrow[r, "\varepsilon_i"']
& a 
\end{tikzcd}
$\displaystyle\frac{G_{g^\prime}G_{a}}{G_b^2}=\rho_2(u),$
\item 
\begin{tikzcd}
g \arrow[r, "\varepsilon_i"] \arrow[d, "\varepsilon_i"']\arrow[rd,phantom,"\lambda+u"]
& b \arrow[d, "\varepsilon_i"] \\
b \arrow[r, "\varepsilon_i"']
& a 
\end{tikzcd}\begin{tikzcd}
g \arrow[r, "\varepsilon_i"] \arrow[d, "\varepsilon_i"']\arrow[rd,phantom,"\lambda -u"]
& b \arrow[d, "\varepsilon_j"] \\
b \arrow[r, "\varepsilon_j"']
& c
\end{tikzcd}$\displaystyle\frac{G_g\sqrt{G_{a}G_c}}{G_b^2}$\\
$+$\begin{tikzcd}
g^\prime \arrow[r, "\varepsilon_j"] \arrow[d, "\varepsilon_j"']\arrow[rd,phantom,"\lambda+u"]
& b \arrow[d, "\varepsilon_i"] \\
b \arrow[r, "\varepsilon_i"']
& a 
\end{tikzcd}\begin{tikzcd}
g^\prime \arrow[r, "\varepsilon_j"] \arrow[d, "\varepsilon_j"']\arrow[rd,phantom,"\lambda -u"]
& b \arrow[d, "\varepsilon_j"] \\
b \arrow[r, "\varepsilon_j"']
& c 
\end{tikzcd}
$\displaystyle\frac{G_{g^\prime}\sqrt{G_{a}G_c}}{G_b^2}$\\
$+\sum_{k\neq i,j}$\begin{tikzcd}
g^{\prime\prime} \arrow[r, "\varepsilon_k"] \arrow[d, "\varepsilon_k"']\arrow[rd,phantom,"\lambda+u"]
& b \arrow[d, "\varepsilon_i"] \\
b \arrow[r, "\varepsilon_i"']
& a 
\end{tikzcd}\begin{tikzcd}
g^{\prime\prime} \arrow[r, "\varepsilon_k"] \arrow[d, "\varepsilon_k"']\arrow[rd,phantom,"\lambda -u"]
& b \arrow[d, "\varepsilon_j"] \\
b \arrow[r, "\varepsilon_j"']
& c 
\end{tikzcd}
$\displaystyle\frac{G_{g^{\prime\prime}}\sqrt{G_{a}G_c}}{G_b^2}=0.$
 \end{enumerate}
 The first relation follows directly from the definition and the observation that
 \begin{align*}
     \frac{G_gG_{g+\varepsilon_i+\varepsilon_j}}{G_{g+\varepsilon_i}G_{g+\varepsilon_j}}=\frac{[g_i-g_j]^2}{[g_i-g_j-1][g_i-g_j+1]}.
 \end{align*}
 The second and third relation come from the identity (\ref{eqn:invAn}), in particular from expressions
 \begin{align*}
     1+\sum_{i\neq j}\frac{[1]^2}{[\lambda +u+1][\lambda -u+1]}\frac{[\tilde a_i-\tilde a_j-\lambda -u][\tilde a_i-\tilde a_j-\lambda +u]}{[\tilde a_i-\tilde a_j]^2}\frac{G_{\tilde a}G_{\tilde a+\varepsilon_i+\varepsilon_j}}{G_{\tilde a+\varepsilon_i}G_{\tilde a+\varepsilon_j}}\\
     =\rho_2 (u)
 \end{align*}
 and
 \begin{align*}
     &\frac{[1]}{[1+\lambda +u]}\frac{[\tilde b_j-\tilde b_i -1 -\lambda -u]}{[\tilde b_j-\tilde b_i-1]}   G_{\tilde b+\varepsilon_i}+ \frac{[1]}{[1+\lambda -u]}\frac{[\tilde b_i-\tilde b_j -1 -\lambda +u]}{[\tilde b_i-\tilde b_j-1]}G_{\tilde b+\varepsilon_j}\\
     &+\sum_{k\neq i,j}\frac{[1]^2}{[1+\lambda +u][1+\lambda -u]}\frac{[\tilde b_i-\tilde b_k -1 -\lambda +u]}{[\tilde b_i-\tilde b_k-1]}\frac{[\tilde b_j-\tilde b_k -1 -\lambda -u]}{[\tilde b_j-\tilde b_k-1]}G_{\tilde b+\varepsilon_k}=0,
 \end{align*}
 with substitutions $\tilde a=-g-\varepsilon_i -\varepsilon_j$ and $\tilde b=-b$.
\end{proof}

\begin{cor}
The Boltzmann weights associated to the algebras $B_n^{(1)}$, $C_n^{(1)}$ and $D_n^{(1)}$ obey the identities
\begin{equation}
\label{eqn:rel1}
\sum_g \sqrt{\frac{G_{g}}{G_{c}}}
\W\Big(\begin{matrix}
  a & c\\
  b & g
\end{matrix}\Big\vert u+\lambda\Big)
\W\Big(\begin{matrix}
  b & g\\
  d & c
\end{matrix}\Big\vert u\Big)=\sqrt{\frac{G_b}{G_a}}\delta_{a,d}\rho(u),
\end{equation}
and
\begin{equation}
\label{eqn:rel2}
\sum_g \sqrt{\frac{G_{g}}{G_{b}}}
\W\Big(\begin{matrix}
  b & d\\
  g & c
\end{matrix}\Big\vert u\Big)
\W\Big(\begin{matrix}
  g & c\\
  b & a
\end{matrix}\Big\vert u+\lambda\Big)=\sqrt{\frac{G_c}{G_a}}\delta_{a,d}\rho(u).
\end{equation}
\end{cor}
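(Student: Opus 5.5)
The plan is to derive both identities from the rotational symmetry \eqref{eqn:rotrel} combined with the inversion relation \eqref{eqn:invrel}, using the reflection symmetry \eqref{eqn:refsym} where needed. In each sum over $g$, exactly one factor carries the shifted spectral parameter $u+\lambda$. I will rotate that factor back to spectral parameter $-u$, which turns the sum into the left-hand side of the inversion relation.

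For \eqref{eqn:rel1}, I apply \eqref{eqn:rotrel} with its spectral parameter replaced by $-u$ and its four heights labelled so that the right-hand side is $\W\big(\begin{smallmatrix} a & c\\ b & g\end{smallmatrix}\big\vert \lambda+u\big)$. Solving for this weight gives
\[
\W\Big(\begin{matrix} a & c\\ b & g\end{matrix}\Big\vert u+\lambda\Big)=\rho(u)\sqrt{\frac{G_bG_c}{G_aG_g}}\,\W\Big(\begin{matrix} b & a\\ g & c\end{matrix}\Big\vert -u\Big).
\]
Substituting this into the left-hand side of \eqref{eqn:rel1}, the prefactor $\sqrt{G_g/G_c}$ combines with the new square root to give $\sqrt{G_b/G_a}$, which no longer depends on $g$. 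The remaining sum is $\sum_g \W\big(\begin{smallmatrix} b & g\\ d & c\end{smallmatrix}\big\vert u\big)\W\big(\begin{smallmatrix} b & a\\ g & c\end{smallmatrix}\big\vert -u\big)$, which equals $\delta_{a,d}$ by \eqref{eqn:invrel}. This yields $\sqrt{G_b/G_a}\,\delta_{a,d}\,\rho(u)$.

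For \eqref{eqn:rel2}, I do the same for the second factor: \eqref{eqn:rotrel} expresses $\W\big(\begin{smallmatrix} g & c\\ b & a\end{smallmatrix}\big\vert u+\lambda\big)$ as $\rho(u)\sqrt{G_bG_c/(G_gG_a)}$ times $\W\big(\begin{smallmatrix} b & g\\ a & c\end{smallmatrix}\big\vert -u\big)$. The prefactor $\sqrt{G_g/G_b}$ then becomes $\sqrt{G_c/G_a}$, independent of $g$. The resulting sum is $\sum_g \W\big(\begin{smallmatrix} b & d\\ g & c\end{smallmatrix}\big\vert u\big)\W\big(\begin{smallmatrix} b & g\\ a & c\end{smallmatrix}\big\vert -u\big)$. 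Here the off-diagonal heights are in transposed positions relative to \eqref{eqn:invrel}, so I apply the reflection symmetry \eqref{eqn:refsym} to both factors. This turns the sum into $\sum_g\W\big(\begin{smallmatrix} b & g\\ d & c\end{smallmatrix}\big\vert u\big)\W\big(\begin{smallmatrix} b & a\\ g & c\end{smallmatrix}\big\vert -u\big)=\delta_{a,d}$, which gives \eqref{eqn:rel2}.

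The algebra is short, so the main point of care is the bookkeeping. First, the relabelling of heights in \eqref{eqn:rotrel} must be done correctly, and the sign of the spectral parameter must be tracked so that $\rho(-(-u))=\rho(u)$ appears. Second, the square roots of the $G$'s must combine consistently. Products and quotients of square roots agree only up to sign, so I will work on the region of Remark~\ref{r-1} where all arguments are positive and all roots are taken positive. On that region $\sqrt{G_g/G_c}\sqrt{G_bG_c/(G_aG_g)}=\sqrt{G_b/G_a}$ holds exactly, and the identities then extend to the complex domain by analytic continuation.
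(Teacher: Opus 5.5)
Your proof is correct, and for \eqref{eqn:rel1} it is exactly the paper's argument: substitute \eqref{eqn:rotrel} at spectral parameter $-u$ into \eqref{eqn:invrel}. For \eqref{eqn:rel2} your route differs from the paper's. The paper does not repeat the computation. It deduces \eqref{eqn:rel2} from \eqref{eqn:rel1} using the reflection symmetry \eqref{eqn:refsym} and the $180^\circ$ (``double rotation'') symmetry $\W\bigl(\begin{smallmatrix} a & b\\ c & g\end{smallmatrix}\,\vert\,v\bigr)=\W\bigl(\begin{smallmatrix} g & c\\ b & a\end{smallmatrix}\,\vert\,v\bigr)$. You instead rotate the $u+\lambda$ factor of \eqref{eqn:rel2} once and reflect both factors, which lands directly on \eqref{eqn:invrel}. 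The paper's route is shorter once \eqref{eqn:rel1} is available. However, the double-rotation identity is not among the listed properties of the weights; it needs two applications of \eqref{eqn:rotrel} together with $\rho(-u)\rho(u-\lambda)=1$. Your route uses only \eqref{eqn:rotrel}, \eqref{eqn:refsym} and \eqref{eqn:invrel}, and treats both identities the same way.

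One correction to your last paragraph concerns the $C_n^{(1)}$ case. There the single ratios $G_g/G_c$ and $G_b/G_a$ carry the sign $\sigma=-1$ and are negative on the alcove (compare \eqref{e-omega}, where $\sigma$ is placed under the root). So there is no region where all the arguments are positive. The cancellation $\sqrt{G_g/G_c}\,\sqrt{G_bG_c/(G_aG_g)}=\sqrt{G_b/G_a}$ still holds if the branches are fixed consistently. For example, set $\sqrt{G_x/G_y}:=\sqrt{\sigma}\,\sqrt{\sigma G_x/G_y}$ for adjacent $x,y$, with one fixed choice of $\sqrt{\sigma}$. The root in \eqref{eqn:rotrel} involves a product of two such ratios, so it contributes no factor $\sqrt{\sigma}$. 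The paper's proof is silent on this point too.
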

\begin{proof}
The first relation is obtained by inserting the rotational symmetry relation (\ref{eqn:rotrel}) in the inversion relation (\ref{eqn:invrel}). The second relation comes from the first one by applying the reflection symmetry (\ref{eqn:refsym}) and the double rotation
\begin{align*}
\W\Big(\begin{matrix}
  a & b\\
  c & g
\end{matrix}\Big\vert u+\lambda\Big)=
\W\Big(\begin{matrix}
  g & c\\
  b & a
\end{matrix}\Big\vert u+\lambda\Big).
\end{align*}
\end{proof}

\section*{Acknowledgements}
This research was supported in part by the grant 196892 of the Swiss National Science Foundation and by the National Centre of Competence in Research SwissMAP (grant number 205607) of the Swiss National Science Foundation.

We thank Raschid Abedin, Tommaso Botta, Elli Pomoni and Muze Ren for discussions.

\printbibliography

\end{document}